\newtheorem{theorem}{Theorem}[section] 
\newtheorem{lemma}[theorem]{Lemma}
\newtheorem{remark}[theorem]{Remark}
\newtheorem{corollary}[theorem]{Corollary}
\newtheorem{definition}[theorem]{Definition}
\numberwithin{equation}{section}
\numberwithin{table}{section}
\numberwithin{figure}{section}
\def\sep{\mathrm{sep}}
\def\seplam{\sep_\lambda(A,B)}
\def\seplamV{\sep_\lambda^\mathrm{V}(A,B)}
\def\seplamD{\sep_\lambda^\mathrm{D}(A,B)}
\def\fV{f^\mathrm{V}}
\def\fD{f^\mathrm{D}}
\newcommand{\seplamDAB}[2]{\mathrm{sep}_\lambda^\mathrm{D}(#1,#2)}
\newcommand{\seplamVAB}[2]{\mathrm{sep}_\lambda^\mathrm{V}(#1,#2)}
\DeclareMathOperator{\cl}{\mathrm{cl}}
\DeclareMathOperator{\bd}{\mathrm{bd}}
\DeclareMathOperator{\interior}{\mathrm{int}}
\def\matlab{MATLAB}
\def\R{\mathbb{R}}
\def\C{\mathbb{C}}
\def\bigO{\mathcal{O}}
\def\imagunit{\mathbf{i}}
\def\transsym{\mathsf{T}}
\newcommand{\tp}[1][]{^{{#1}\transsym}}
\def\fro{_\mathrm{F}}
\def\hinf{\mathcal{H}_\infty}
\def\linf{\mathcal{L}_\infty}
\DeclareMathOperator{\Arg}{Arg}
\DeclareMathOperator*{\argmin}{arg\,min}
\def\e{\mathrm{e}}
\def\eit{\eix{\theta}}
\def\emit{\emix{\theta}}
\newcommand{\eix}[1]{\e^{\imagunit #1}}
\newcommand{\emix}[1]{\e^{-\imagunit #1}}
\newcommand{\ps}[1][]{%
   \ifthenelse{ \equal{#1}{} }
      {\Lambda_\eps}
      {\Lambda_{#1}}
}
\def\setA{\mathcal{A}}
\def\setB{\mathcal{B}}
\def\setZ{\mathcal{Z}}
\def\setL{\mathcal{L}}
\def\setT{\mathcal{T}}
\def\ray{\mathcal{R}}
\newcommand{\rayth}[1][]{%
   \ifthenelse{ \equal{#1}{} }
      {\ray_\theta}
      {\ray_{#1}}
}
\newcommand{\afn}[1][]{%
   \ifthenelse{ \equal{#1}{} }
      {a_\eps}
      {a_{#1}}
}
\newcommand{\bfn}[1][]{%
   \ifthenelse{ \equal{#1}{} }
      {b_\eps}
      {b_{#1}}
}
\newcommand{\olfn}[1][]{%
   \ifthenelse{ \equal{#1}{} }
      {\ell_\eps}
      {\ell_{#1}}
}
\newcommand{\tfn}[1][]{%
   \ifthenelse{ \equal{#1}{} }
      {t_\eps}
      {t_{#1}}
}
\newcommand{\aroots}[1][]{%
   \ifthenelse{ \equal{#1}{} }
      {\setA_\eps}
      {\setA_{#1}}
}
\newcommand{\broots}[1][]{%
   \ifthenelse{ \equal{#1}{} }
      {\setB_\eps}
      {\setB_{#1}}
}
\newcommand{\abroots}[1][]{%
   \ifthenelse{ \equal{#1}{} }
      {\setZ_\eps}
      {\setZ_{#1}}
}
\newcommand{\olset}[1][]{%
   \ifthenelse{ \equal{#1}{} }
      {\setL_\eps}
      {\setL_{#1}}
}
\newcommand{\troots}[1][]{%
   \ifthenelse{ \equal{#1}{} }
      {\setT_\eps}
      {\setT_{#1}}
}
\def\eps{\varepsilon}
\def\smin{\sigma_{\min}}
\def\n{n}
\def\Hcal{\mathcal{H}}
\def\Hinf{\Hcal_\infty}
\renewcommand{\Re}{\mathrm{Re}\,}
\renewcommand{\Im}{\mathrm{Im}\,}
\def\beq{\begin{equation}}
\def\eeq{\end{equation}}
\def\beqs{\begin{equation*}}
\def\eeqs{\end{equation*}}
\def\bseq{\begin{subequations}}
\def\eseq{\end{subequations}}
\newcommand{\algnote}[1]{\footnotesize \sc{Note: \it#1 } }
\title{Fast Computation of {Sep}$_\lambda$ via Interpolation-based\\Globality Certificates}
\author{Tim Mitchell\thanks{
Max Planck Institute for Dynamics of Complex Technical Systems, Magdeburg, 39106 Germany.
Current address: Department of Computer Science, Queens College / CUNY, 65-30
Kissena Boulevard, Flushing, NY 11367, USA,
\href{tim.mitchell@qc.cuny.edu}{\texttt{tim.mitchell@qc.cuny.edu}}.
ORCID: 000-0002-8426-0242.
}}
\date{{\small November 12, 2019\\ Revised: September 13, 2020, May 25, 2021, March 22, 2022, January 21, 2023}}
\begin{document}
\maketitle
\begin{abstract}
Given two square matrices $A$ and $B$, we propose a new approach for computing
the smallest value $\eps \geq 0$ such that $A+E$ and $A+F$ share an eigenvalue,
where \mbox{$\|E\|=\|F\|=\eps$}.
In 2006, Gu and Overton proposed the first algorithm for computing 
this quantity,
called $\seplam$ (``sep-lambda"), using ideas inspired from an earlier algorithm 
of Gu for computing the distance to uncontrollability.
However, the algorithm of Gu and Overton is extremely expensive, 
which limits it to the tiniest of problems, and until now, no other
algorithms have been known. 
Our new algorithm can be orders of magnitude faster and can solve
problems where $A$ and~$B$ are of moderate size.
Moreover, our method consists of many ``embarrassingly parallel" computations, 
and so it can be further accelerated on multi-core hardware.
Finally, we also propose the first algorithm to compute 
an earlier version of sep-lambda where \mbox{$\|E\| + \|F\|=\eps$}.
\end{abstract}

\noindent
\textbf{Keywords:}
sep-lambda, eigenvalue separation, eigenvalue perturbation,
pseudospectra, Hamiltonian matrix

\medskip
\noindent
\textbf{Notation:}
$\| \cdot \|$ denotes the spectral norm, $\smin(\cdot)$ the smallest singular value, 
$\Lambda(\cdot)$ the spectrum, 
$\kappa(\cdot)$ the condition number of a matrix with respect to the spectral norm,
$J = \begin{bsmallmatrix} 0 & I \\ -I & 0 \end{bsmallmatrix}$, a matrix $A \in \C^{2\n\times 2\n}$
is Hamiltonian if $(JA)^* = JA$, 
 $\mu(\cdot)$ the Lebesque measure on $\R$, 
 and $\bd \setA$, $\interior \setA$, and $\cl \setA$ respectively
the boundary, interior, and closure of a set $\setA$.

\section{Introduction}
The quantity $\seplam$ measures how close two square matrices $A \in \C^{m \times m}$ and $B \in \C^{n \times n}$ 
are to sharing a common eigenvalue,
in the sense of how much $A$ and $B$ must be perturbed in order to make this so.
Varah first introduced $\seplam$ in 1979 in \cite{Var79}, and it was subsequently studied by Demmel 
in~\cite{Dem83,Dem86,Dem87c}, although Demmel used a slightly modified version, partly ``because it lets us 
state slightly sharper results later on" \cite[p.~24]{Dem83}.
The two definitions are:
\bseq	
	\label{eq:seplam}
	\begin{alignat}{3}
	\label{eq:seplamv}
	\seplamV  &\coloneqq  \min_{\substack{E \in \C^{m \times m} \\ F \in \C^{n \times n}}} 
		 \{\eps : \Lambda(A + E) \cap  \Lambda(B + F) \neq \varnothing, \, \|E\|  +  \|F\| \leq \eps\}, \\
	\label{eq:seplamd}
	\seplamD &\coloneqq \min_{\substack{E \in \C^{m \times m} \\ F \in \C^{n \times n}}} 
		\{\eps : \Lambda(A  + E) \cap \Lambda(B + F) \neq \varnothing, \max(\|E\|, \, \|F\|) \leq  \eps\}, 
	\end{alignat}
\eseq
with $\seplamV$ denoting Varah's definition and $\seplamD$ denoting Demmel's.
Obviously, they are both zero if $A$ and $B$ share an eigenvalue and both positive otherwise.
When it is not necessary to distinguish between the two variants, 
we drop the superscript and just write $\seplam$.
For convenience, we also assume that $m \leq n$ throughout the paper.

The two $\seplam$ quantities can also be equivalently defined in terms of singular values as well as 
pseudospectra \cite[pp.~348--349]{GuO06a}, where for some $\eps \geq 0$, the \emph{$\eps$-pseudospectrum} of a matrix $A$ is defined
\bseq
\label{eq:pseudo}
\begin{align}
	\ps(A) 
	\coloneqq {}& \{ z \in \C : z \in \Lambda(A+\Delta), \, \|\Delta\| \leq \eps \}, \\
	\label{eq:pseudo_norm_res}
	= {}& \{ z \in \C : \smin(A - zI) \leq \eps \}.
\end{align}
\eseq
The first definition of pseudospectra dates to at least 1967, in Varah's Ph.D. thesis \cite{Var67} with his introduction of an \emph{r-approximate eigenvalue},
while in his 1979 paper on $\seplamV$, Varah used the term \emph{$\eps$-spectrum} for $\ps(A)$.
The current definitive reference on pseudospectra and their applications
is certainly Trefethen and Embree's well-known book on the topic \cite{TreE05}.  
The term ``pseudospectrum" was actually coined by Trefethen in 1990 \cite[Ch.~6]{TreE05}, 23 years
after Varah's thesis, although it now considered the standard name. 

The singular-value-based definitions of $\seplam$ are
\bseq	
	\label{eq:min_z}
	\begin{alignat}{3}
	\label{eq:min_zv}
	\seplamV &= \min_{z \in \C} \{ \smin(A - zI) + \smin(B - zI) \} &&\eqqcolon \min_{z \in \C} \fV(z), \\
	\label{eq:min_zd}
	\seplamD &= \min_{z \in \C} \max \{ \smin(A - zI), \, \smin(B - zI) \} &&\eqqcolon \min_{z \in \C} \fD(z).
	\end{alignat}
\eseq
For equivalent pseudospectral-based definitions of $\seplam$, we have 
\bseq	
	\label{eq:min_e}
	\begin{alignat}{3}
	\label{eq:min_ev}
	\seplamV &= \min_{\eps_1,\eps_2 \geq 0} && \{\eps_1 + \eps_2 : \ps[\eps_1](A) \cap \ps[\eps_2](B) \neq \varnothing \}, \\
	\label{eq:min_ed}
	\seplamD &= \ \ \min_{\eps \geq 0} && \{\eps : \ps(A) \cap \ps(B) \neq \varnothing \}.
	\end{alignat}
\eseq
If $\eps \geq \seplamD$ holds, 
then $\interior \ps(A) \cap \interior \ps(B) = \varnothing$ is a sufficient condition for \mbox{$\eps=\seplamD$}.
In contrast, while $\interior \ps[\eps_1](A) \cap \interior \ps[\eps_2](B) = \varnothing$ is a necessary condition for $\eps_1+\eps_2=\seplamV$ to hold, it is not a sufficient condition.
This is because one can continuously adjust $\eps_1$~and~$\eps_2$
such that the two pseudospectra always touch but never have interior points in common.
For example, suppose that \mbox{$\seplamV > 0$}, and let $\hat \eps_1$ be such that $\ps[\hat \eps_1](A)$ and $\ps[0](B)$ only touch,
i.e., an eigenvalue of $B$ is in $\bd \ps[\hat \eps_1](A)$ but 
\mbox{$\interior \ps[\hat \eps_1](A) \cap \ps[0](B) = \varnothing$}.
In the same fashion, let $\hat \eps_2$ be such that~$\ps[0](A)$ and~$\ps[\hat \eps_2](B)$ only touch.
Then by continuity of pseudospectra,
it is clear that the 2D point $(\eps_1,\eps_2)$ can be continuously adjusted between 
point $(\hat \eps_1,0)$ and point $(0,\hat \eps_2)$ 
such that $\bd \ps[\eps_1](A) \cap \bd \ps[\eps_2](B) \neq \varnothing$ and
\mbox{$\interior \ps[\eps_1](A) \cap \interior \ps[\eps_2](B) = \varnothing$}
both always hold.

Varah called $\seplamV$ the \emph{spectrum separation} in \cite[Definition~3.2]{Var79}
due to its pseudospectral underpinnings; in fact, in his definition, he used
the form given in~\eqref{eq:min_ev}, not the other two alternatives.
His motivation in defining $\seplamV$ was its connection to the 
sensitivity of solving the Sylvester equation:
\beq
	\label{eq:sylv}
	AX - XB = C,
\eeq
where $X,C \in \C^{m \times n}$ and
\eqref{eq:sylv} has a unique solution if and only if $A$ and $B$ have no common eigenvalue.
As Varah noted~\cite[p.~216]{Var79}, the sensitivity of a solution to \eqref{eq:sylv} is inversely proportional
to the \emph{separation of~$A$ and~$B$}:
\[
	\sep(A,B) \coloneqq \min_{\|X\|\fro = 1} \| AX - XB \|\fro = \smin(I_n \otimes A - B\tp \otimes I_m),
\]
a quantity which Stewart had earlier introduced for studying invariant subspaces~\cite[Definition~4.5]{Ste73b}.
It holds that $0 \leq \sep(A,B) \leq \min_{\lambda \in \Lambda(A), \mu \in \Lambda(B)} | \lambda - \mu |$,
and clearly, the lower bound is attained if and only if  $A$ and $B$ have an eigenvalue in common,
while the upper bound is attained if $A$ and $B$ are both normal.
However, Varah stressed that if $A$ or $B$ is nonnormal, 
then $\sep(A,B)$ can be very close to zero, e.g., machine precision, 
even if the eigenvalues of $A$ and $B$ are well separated,
and that $\sep(A,B)$ is often orders of magnitude smaller than $\seplamV$.
In~1993, Higham's thorough error analysis for solving~\eqref{eq:sylv} numerically
showed that bounding the error of a computed solution in terms of $\sep(A,B)^{-1}$
can sometimes ``severely overestimate the effect of a perturbation on the data when only $A$ and $B$ are perturbed, 
because it does not take account of the special structure of the problem" \cite[p.~133]{Hig93},
while simultaneously presenting an alternative error bound that remedies this deficiency.  
A few years later, Simoncini used $\seplamV$ and pseudospectra in her analysis of
solving~\eqref{eq:sylv} via a Galerkin method~\cite{Sim96a}.

Meanwhile, Demmel initial interest in (his version of) $\seplam$ was for problem of computing stable 
eigendecompositions~\cite{Dem83,Dem86}, but in an entirely different context~\cite{Dem87c}, 
he subsequently used $\seplamD$ to disprove two conjectures respectively made by himself and Van Loan related to the (then unsolved)
problem of computing the distance to instability of a stable matrix.
Following in the spirit of using $\seplam$ in the analysis of the 
stability of invariant subspaces of matrices~\cite{Var79,Dem83,Dem86},
 Karow and Kressner used $\seplamD$ in 2014 as a tool in deriving improved perturbation bounds~\cite{KarK14}.
Most recently in 2021, Roy~et~al.~\cite{RoyKBetal20} used~$\seplamV$ in 
connection with approximating pseudospectra of block triangular matrices;
in this case, the value of $\seplamV$ can be used to construct several different outer approximations to pseudospectra
of these structured matrices.

In terms of computing $\seplam$, 
 to the best of our knowledge, only a single algorithm has been given so far for $\seplamD$, due to Gu and Overton in 2006~\cite{GuO06a},
while no algorithms have appeared to date for $\seplamV$.
Nevertheless, computing $\seplamD$ can at least approximate
$\seplamV$ to within a factor of two since
\beq
	\label{eq:varah_bound}
	\frac{1}{2}\seplamV \leq \seplamD \leq \seplamV,
\eeq
which is simply a special case of the relation $\tfrac{1}{n}\|x\|_1 \leq \|x\|_\infty \leq \|x\|_1$ for $x \in \C^n$
obtained by respectively identifying $\seplamV$ and $\seplamD$ with the $1$-norm and $\infty$-norm 
of \mbox{$(\|E\|, \|F\|)\tp \in \R^2$}.

It is easy to obtain upper bounds for $\seplam$ by simply evaluating $\fV$ and/or $\fD$ defined in \eqref{eq:min_z}
at any points $z \in \C$, or better, by applying (nonsmooth) optimization techniques to find local minimizers of them.
Due to the $\max$ function in $\fD$, it is typically nonsmooth at minimizers, 
while~$\fV$ will be nonsmooth at a minimizer if that minimizer happens to coincide 
with an eigenvalue of~$A$ or $B$, which as Gu and Overton mentioned, is often the case for $\seplamV$.
Despite the potential nonsmoothness, $\fV$ and $\fD$ are rather straightforward functions 
in just two real variables (via $z = x + \imagunit y$),
whose function values and gradients (assuming $z$ is a point where they are differentiable)
can be obtained via computing~$\smin(A - zI)$ and $\smin(B - zI)$
and their corresponding left and right singular vectors.
When $A$ and $B$ are large and sparse, it is often still possible to efficiently compute 
$\fV$ and $\fD$ and their gradients via sparse methods.
Nevertheless, finding local minimizers of \eqref{eq:min_z} provides no guarantees for computing~$\seplam$,
particularly since these problems may have many different local minima and 
the locally optimal function values associated with these minima may be very different.
Moreover, in applications that use distances measures such as $\seplam$, 
obtaining an upper bound via local optimization is generally much less useful 
than either computing the actual measure or a lower bound to it.    
Indeed, in motivating their algorithm for $\seplamD$, Gu and Overton aptly remarked~\cite[p.~350]{GuO06a}: 
``the inability to verify global optimality [of minimizers of $\fD$] remains a stumbling block 
preventing the computation of $\seplam$, or even the assessment of the quality of upper bounds, 
via optimization" and ``in applications, lower bounds for such distance functions are
more important than upper bounds, as they provide `safety margins.'"

In this paper, we propose a new and much faster method to compute $\seplamD$ to arbitrary accuracy,
using properties of pseudospectra, local optimization techniques, 
and a new methodology that we recently introduced in \cite{Mit21} 
for finding global optimizers of singular value functions in two real variables.
This new approach, called \emph{interpolation-based globality certificates},
can be orders of magnitude faster than existing techniques and also avoids numerical difficulties inherent
in older approaches.
A modified version of our new $\seplamD$ algorithm also produces estimates of $\seplamV$ 
with stronger guarantees than those obtained by optimization; specifically,
this modified method produces locally optimal upper bounds~\mbox{$\tilde \eps = \eps_1 + \eps_2 \geq \seplamV$}
such that $\interior \ps[\eps_1](A) \cap \interior \ps[\eps_2](B) = \varnothing$, 
which is a necessary condition for $\tilde \eps = \seplamV$ to hold, but which optimization alone does not guarantee.
Finally, we also propose a separate algorithm that is the first to compute $\seplamV$.

The paper is organized as follows. 
In~\cref{sec:background}, we give a brief overview 
of Gu and Overton's method for~$\seplamD$ \cite{GuO06a} and explain its shortcomings.
Then, in \cref{sec:algorithm}, we give a high-level description of our new \emph{optimization-with-restarts} method
and an introduction to the ideas underlying interpolation-based globality certificates.  
As our new globality certificate for $\seplamD$ is quite different and significantly more complicated 
than those we devised for computing Kreiss constants and the distance to uncontrollability in~\cite{Mit21}, 
we develop the necessary theoretical statements and components over three separate stages 
in~\cref{sec:components}, \cref{sec:overlap}, and \cref{sec:support}.
In \cref{sec:implement}, we describe how to implement our completed algorithm and give its overall work complexity.
We then turn to Varah's sep-lambda in~\cref{sec:varah}.
Numerical experiments are presented in \cref{sec:experiments}, with concluding remarks given in \cref{sec:conclusion}.

\section{Gu and Overton's method to compute $\seplamD$ and its limitations}
\label{sec:background}
The algorithm of Gu and Overton for computing $\seplamD$ is particularly expensive: 
it is \mbox{$\bigO((m+n)m^3n^3)$} work, e.g.,~$\bigO(n^7)$ when $m=n$,
which makes it intractable for all but the tiniest of problems.
The core of their method is a pair of related tests, each of which is 
inspired by a novel but expensive 2D level-set test developed earlier by 
Gu for estimating the distance to uncontrollability \cite{Gu00}.
The cost of each test is dominated by solving 
an associated generalized eigenvalue problem of order $4mn$,
which is $\bigO(m^3n^3)$ work 
when using standard dense eigensolvers.\footnote{With respect to the usual convention of treating the computation of eigenvalues as an atomic operation with cubic work complexity, which we use throughout this paper.}
Given some $\eps \geq 0$, the first test (\cite[Algorithm~1]{GuO06a}) checks whether 
the $\eps$-level sets of~$\smin(A - zI)$ and~$\smin(B - zI)$ have any points in common. 
If this is indeed the case, then clearly~$\eps \geq \seplamD$ must hold.
However, if there are no level-set points in common, one cannot conclude that $\eps < \seplamD$ holds.
For example, having no shared level-set points may just be a consequence of $\ps(A)$ being a subset of $\interior \ps(B)$ or vice versa,
in which case, clearly~$\eps > \seplamD$ holds.
To get around this difficulty, Gu and Overton devised an initialization procedure~(\cite[Algorithm~2]{GuO06a}), 
which invokes their second test many times in order to compute 
an upper bound~$\eps_\mathrm{ub}$ such that for all~$\eps < \eps_\mathrm{ub}$,
no connected component of $\ps(A)$ can be strictly inside a component of $\ps(B)$ or vice versa.\footnote{In~\cite{GuO06a},
Gu and Overton state that this ``not strictly inside" property holds for $\eps \leq \eps_\mathrm{ub}$,
but actually this inequality should be strict.
Near the top of~\cite[p.~354]{GuO06a}, it is claimed that ``$\eps = \smin(A - zI) > \smin(B - zI)$" holds,
where $z \in \bd \ps(A)$ and~$z \in \interior \ps(B)$.
However, per~\cite[p.~31]{ArmGV17}, there can exist a finite number of points $z \in \interior \ps(B)$
such that $\smin(B - zI) = \eps$, and so the ``not strictly inside" claim may or may not hold when~$\eps = \eps_\mathrm{ub}$.
Fortunately, with inexact arithmetic, there is essentially no practical consequence of this small oversight, 
while the theory in~\cite{GuO06a} is corrected merely by replacing $\leq$ with $<$.
}
With this possibility excluded, i.e., $\eps < \eps_\mathrm{ub}$, 
the outcome of the first test then does indicates whether or not $\eps < \seplamD$ holds.
Gu and Overton's overall method \cite[Algorithm~3]{GuO06a} thus first computes $\eps_\mathrm{ub}$ via their initialization procedure
and then uses their first test to power a bisection iteration that converges to $\seplamD$.
The entire bisection phase of their algorithm remains $\bigO(m^3n^3)$ work, since the number of bisection steps can
be taken as a constant, but the initialization phase to compute the necessary $\eps_\mathrm{ub}$ 
involves invoking the second test for $(m+n)$ different parameter values, i.e., 
it solves~$(m+n)$ different generalized eigenvalue problems of order $4mn$.
Hence, the cost of their entire method is dominated by the initialization procedure,
and the total asymptotic work complexity is~$\bigO((m+n)m^3n^3)$.

In their concluding remarks \cite[p.~358]{GuO06a},
Gu and Overton noted that the faster divide-and-conquer technique of~\cite{GuMOetal06}
for computing the distance to controllability 
could potentially be adapted to $\seplamD$, writing that ``Although there are some
inevitable difficulties with the numerical stability of this approach, the complexity
drops significantly."  
Indeed, when $m=n$, adapting this divide-and-conquer approach would 
bring down the $\bigO(n^7)$ work complexity of their algorithm for $\seplamD$
to $\bigO(n^5)$ on average and~$\bigO(n^6)$ in the worst case.
However, this has not been implemented, and in our own experience of adapting this
divide-and-conquer technique to other algorithms, we have observed that 
doing so can indeed come at the cost of significantly worse reliability due to numerical issues; 
see \cite[section~8]{Mit20a}.

Even with dense eigensolvers, Gu and Overton's method can be susceptible to numerical difficulties.
A primary concern is that the first test (used for bisection)
actually requires being able to assert whether or not two matrices have an eigenvalue in common.
If eigenvalues can be computed exactly (which is possible in some cases, e.g., a diagonal matrix),
then testing whether two matrices share an eigenvalue can be done without issues.
However, in a practical code,
computed eigenvalues will have rounding errors, and so one must generally resort 
to using a tolerance in order to carry out this test.
But this also means that it is possible for the test to incorrectly assert that two eigenvalues are the same
when they should only be considered close or vice versa.
This is critical because 
the \emph{binary decision} of bisection hinges upon the outcome of this numerical test.
Making the wrong choice about the eigenvalues can cause bisection to 
erroneously update a lower or upper bound,
which in turn can result in a significant or even complete loss of accuracy in the computed estimate.
The distance-to-uncontrollability methods of~\cite{Gu00,BurLO04,GuMOetal06} also 
have the same numerical pitfall.  In the context of computing Kreiss constants via 2D level-set tests \cite{Mit20a},
we recently proposed an improved procedure that does not require checking for shared eigenvalues, 
and as such, it is much more reliable in practice; see~\cite[Key~Remark~6.3]{Mit20a}.
Our improved technique can also be used to improve the reliability of the aforementioned distance-to-uncontrollability algorithms,
but it does not appear to be applicable for Gu and Overton's algorithm for~$\seplamD$.
The fundamental difference in the $\seplamD$ setting is that Gu and Overton's first test is based upon checking whether
or not the $\eps$-level sets of two \emph{different} functions,~$\smin(A - zI)$ and~$\smin(B - zI)$, have any points in common,
whereas for the other quantities, pairs of points on a given level set of a \emph{single} function are sought.

Finally, another way to provide some speedup to Gu and Overton's method would be to replace the bisection 
phase with an \emph{optimization-with-restarts} iteration.  In this case, a minimizer $\tilde z$ of $\fD$ with~$\eps = \fD(\tilde z)$ would be found using some nonsmooth optimization solver,
and then, assuming~\mbox{$\eps < \eps_\mathrm{ub}$}, Gu and Overton's first test (\cite[Algorithm~1]{GuO06a})
would be used to assert whether or not~$\tilde z$ is a global minimizer of $\fD$. 
If so, then $\seplamD = \eps$ and the computation is done.
Otherwise, recalling that the first test computes the points $z \in \C$ such that~\mbox{$\smin(A - zI) = \smin(B - zI) = \eps$},
local optimization can be restarted from these points in order to find a better (lower) minimizer.
Any such optimization-with-restarts method must monotonically converge to $\seplamD$
within a finite number of restarts because $\fD$ only has a finite number of locally minimal function
values, due to $\fD$ being semialgebraic. 
However, there are some issues with this modification.  
The main limitation is that it neither accelerates nor removes the need for the initialization procedure 
for obtaining $\eps_\mathrm{ub}$, which is $\bigO(n^7)$ work, 
while the subsequent convergent phase of either bisection or optimization-with-restarts is $\bigO(n^6)$ work.  
Consequently, any speedups will be both quite small and limited to the smallest values of $n$, while being essentially nonexistent
for larger $n$.  
Another problem is that theory for nonsmooth optimization typically requires that solvers are initialized at points 
where the function is differentiable (see, e.g., \cite{BurCLetal20,LewO13,CurMO17}), 
but by its nature, the points computed by Gu and Overton's first test are all points
where $\fD$ will almost certainly be nonsmooth.
Hence, there may be issues in restarting optimization via these points, and depending on the exact solver and problem,
we have observed that solvers can indeed stagnate at these initial points.  
It is not entirely clear how to best overcome this latter issue, but for us, it is not a priority.
Instead, the focus of this paper is to propose an entirely different approach
to computing $\seplamD$ that allows us to use optimization-with-restarts without any
of the aforementioned drawbacks of extremely high costs, expensive initialization procedures,
and various numerical and technical issues.

\section{A high-level overview of our new $\seplamD$ algorithm}
\label{sec:algorithm}
To find a global minimizer of $\fD$, a global optimization problem in two real variables,
we will instead develop our optimization-with-restarts algorithm using 
\emph{interpolation-based globality certificates}~\cite{Mit21}.
The core task in developing such a method  
is to devise a generally continuous function (i.e., it may have some jumps) in one real variable that, given an estimate \emph{greater} than the globally minimal value, has an identifiable subset of its domain with \emph{positive measure} that 
provides a guaranteed way of locating new starting points for another round of optimization.
When an estimate is globally minimal, this function should alternatively assert this fact somehow,
e.g., by determining that the aforementioned subset is either empty or has measure zero.
By sufficiently well approximating this function globally via a piecewise polynomial interpolant (this interpolant may also have jumps),
e.g., by using Chebfun\footnote{Available at \url{https://www.chebfun.org}.}~\cite{DriHT14},
it is then possible to quickly check for the existence 
of the aforementioned positive measure subset, whose presence indicates that the estimate
is not globally optimal.  In fact, Chebfun can efficiently compute the precise set of intervals corresponding to this subset.
When the estimate is too large, the property that there exists a subset of positive measure associated 
with new starting points is crucial for two reasons.
First, it means that encountering this subset during the interpolation process is not  a probability zero event,
and so if the function is well approximated, this subset will be detected. 
Second, optimization can be immediately restarted once any points in this subset are discovered, 
and so high-fidelity interpolants will often not be needed.
As a result, restarts tend to be very inexpensive, while high-fidelity approximation is generally only needed for the final interpolant, 
which asserts that global convergence has indeed been obtained.
Moreover, in practice only a handful of restarts are typically needed.
Besides overall efficiency, interpolation-based globality certificates are inherently amenable to additional acceleration 
via parallel processing (see \cite[section~5.2]{Mit21}), while also being quite numerically robust compared to other techniques.
There are several reasons for this latter property, but one is that by the nature of interpolation, global 
convergence is assessed as the result over many computations, whereas other approaches often rely upon  
a single computation that may result in an erroneous conclusion due to rounding errors;
for more details, see~\cite[sections~1.3 and 2.3]{Mit21}.

Given some estimate $\eps \geq \seplamD$, 
in the next sections, we consider the problem of what function~\mbox{$d_\eps : \R \to \R$} to devise for our globality certificate
for either finding new points for restarting optimization or asserting whether $\eps = \seplamD$ holds.
The function~$d_\eps$ should be reasonably well behaved and relatively cheap to evaluate,
as otherwise approximating it could be prohibitively expensive and/or difficult.
But as mentioned above, Chebfun can efficiently handle nonsmooth and discontinuous functions;
\cite[pp.~905--906]{PacPT09} describes the algorithm that Chebfun uses to efficiently detect 
discontinuities, either jumps in the function values or derivatives, which
allows Chebfun to work around these difficult points during its approximation process.
Consequently, we do not have to limit ourselves to smooth continuous candidates for $d_\eps$.
The function that we will propose is based on detecting 
whether or not $\interior \ps(A) \cap \interior \ps(B)$ is empty
and asserts that $\eps > \seplamD$ if and only if 
$\min_{\theta \in (-\pi,\pi]} d_\eps(\theta) < 0$ holds.
Moreover, our certificate for detecting whether 
\mbox{$\interior \ps(A) \cap \interior \ps(B) = \varnothing$} holds
works for any value $\eps > \seplamD$.
As we will explain, our~$d_\eps$-based globality certificate 
incurs $\bigO(kn^3)$ work (recall that we assume $m \leq n$),
where $k$ is the number of function evaluations required to sufficiently approximate $d_\eps$.
Furthermore, our certificate also becomes more efficient the larger $\eps$ is,
i.e., relatively few function evaluations are needed to approximate $d_\eps$ when $\eps \gg \seplamD$
as compared to when $\eps \approx \seplamD$.

\begin{remark}
\label{rem:any_overlap}
Although the first test of Gu and Overton (\cite[Algorithm~1]{GuO06a})
also detects if \mbox{$\interior \ps(A) \cap \interior \ps(B) = \varnothing$} holds,
note that their test is both more limited in scope and more expensive than our~$d_\eps$-based certificate.
Gu and Overton's first test (a) requires that $\eps < \eps_\mathrm{ub}$ holds in order to use it, with $\eps_\mathrm{ub}$
being very expensive to obtain, and (b) 
does the same amount of work regardless of the value of~$\eps$;
again, when~$m=n$, 
 computing $\eps_\mathrm{ub}$ is $\bigO(n^7)$ work, while \cite[Algorithm~1]{GuO06a} is $\bigO(n^6)$ work.
\end{remark}

\section{Locating pseudospectral components}
\label{sec:components}
We now work on defining $d_\eps$ and establishing its properties, which is done over three sections.
This section follows similarly to \cite[sections~2--4]{Mit21}, 
where we first proposed interpolation-based globality certificates to find level-set components
as tools for computing Kreiss constants and the distance to uncontrollability.
Here we adapt these ideas to locating pseudospectral components, and throughout this section, 
we provide specific references to counterparts in \cite{Mit21}.  
However, as will be seen, 
computing~$\seplamD$ is more complicated than computing these other quantities, and 
so the additional tools that we develop in \cref{sec:overlap} and \cref{sec:support} will also be needed.

Given a matrix $A \in \C^{m \times m}$, $\eps \geq 0$, and some $z_0 \in \C$
such that $\eps$ is not a singular value of~$A - z_0 I$,
in this section we propose a way of 
determining which rays emanating from $z_0$ intersect with~$\ps(A)$ and which do not.
We define the ray emanating from $z_0$ specified by angle $\theta \in \R$ as
\beq
	\label{eq:ray}
	\rayth \coloneqq \{ z_0 + r\eit \in \C : r \in \R, \, r > 0\}.
\eeq
As we will explain momentarily, our assumption on $\eps$ ensures that a  
condition needed by our method indeed holds; relatedly, 
our assumption also ensures that the ``search point" $z_0$ is not on the boundary of~$\ps(A)$.
Consider the following function parameterized in polar coordinates:
\beq
	\label{eq:fA}
	f_A(r,\theta) = \smin(A-(z_0 + r\eit) I)) 
	= \smin ( F_A(r,\theta)),
	\ \ \text{where} \ \
 	F_A(r,\theta) = \imagunit \emit (A - z_0 I) - \imagunit r I
\eeq
and the second equality above holds since multiplication by a unitary scalar does not alter 
the singular values of a matrix.
Note that~$\bd \ps(A)$ is contained in the $\eps$-level set of $f_A$.
The following pair of results give us a way to determine whether or not $\rayth$ and  $\ps(A)$ intersect,
and when they do, to also calculate all the points in $\rayth \cap \bd \ps(A)$.
The first of these two results is yet another variation of the 1D level-set technique 
Byers introduced in order to develop the first method for computing the distance to instability in~1988~\cite{Bye88},
a powerful tool which we and many others have adapted, extended, or used to develop
1D and 2D methods to compute various quantities.
Applications include the
$\hinf$ and $\linf$ norms~\cite{BoyBK89,BoyB90,BruS90,BenSV12,BenM18a},
distance to uncontrollability~\cite{Bye90a,GaoN93,Gu00,Mit21},
numerical radius~\cite{HeW97,MenO05,Mit20},
pseudospectral (or spectral value set) abscissa and radius~\cite{BurLO03,MenO05,BenM19},
Kreiss constants~\cite{Mit20a,Mit21},
as well as the optimization of passive systems~\cite{MehV20,MehV20a,MitV21}.

\def\citeThmsOne{cf. \cite[Theorems~2.1, 3.1, and 4.1]{Mit21}}
\begin{lemma}[\citeThmsOne]
Let $A \in \C^{m \times m}$, $\eps \geq 0$, $z_0 \in \C$, and $r, \theta \in \R$.
Then $\eps$ is a singular value of $F_A(r,\theta)$ defined in \eqref{eq:fA} if and only if $\imagunit r$ is an eigenvalue of
the Hamiltonian matrix 
\beq
	\label{eq:eigCD}
	C_\theta \coloneqq
	\begin{bmatrix}
		\imagunit \emit(A - z_0 I)  	& -\eps I  \\
		\eps I 				& \imagunit \eit (A - z_0 I )^* \\
	\end{bmatrix}.
\eeq
\label[lemma]{lem:eig_ham}
\end{lemma}
\begin{proof}
Suppose that $\eps$ is a singular value of $F_A(r,\theta)$ with left and right singular vectors $u$ and $v$.  Then
\[
	\eps 
	\begin{bmatrix}
		u \\ v
	\end{bmatrix}
	= 
	\begin{bmatrix}
		F_A(r,\theta)  	& 0 \\
		0 			& F_A(r,\theta)^*
	\end{bmatrix}
	\begin{bmatrix}
		v \\ u
	\end{bmatrix}
	= 
	\begin{bmatrix}
		\imagunit \emit (A - z_0 I) 	& 0	\\
		0 					& -\imagunit \eit (A - z_0 I)^* 
	\end{bmatrix}
	\begin{bmatrix}
		v \\ u
	\end{bmatrix}
	+ 	
	\imagunit r
	\begin{bmatrix}
		- I   	& 0 \\
		0 		& I 
	\end{bmatrix}
	\begin{bmatrix}
		v \\ u
	\end{bmatrix}.
\]
Rearranging terms, using the fact that
$\begin{bsmallmatrix} u \\ v \end{bsmallmatrix} 
= 
\begin{bsmallmatrix} 0 & I \\ I & 0 \end{bsmallmatrix} 
\begin{bsmallmatrix} v \\ u \end{bsmallmatrix} 
$, and multiplying the bottom block row by~$-1$, 
we obtain $C_\theta \begin{bsmallmatrix} v \\ u \end{bsmallmatrix}  = \imagunit r \begin{bsmallmatrix} v \\ u \end{bsmallmatrix}$.
\hfill
\end{proof}

\begin{corollary}
Let $A \in \C^{m \times m}$, $\eps \geq 0$, $z_0 \in \C$, $r,\theta \in \R$, 
and $\rayth$ be the ray defined by \eqref{eq:ray}.
Then $\rayth \cap \ps(A) \neq \varnothing$
if and only if $\imagunit r$ is an eigenvalue of $C_\theta$ with $r > 0$.
\label[corollary]{cor:ray_thru}
\end{corollary}
\begin{proof}
Suppose that $\rayth$ and 
$\ps(A)$ intersect.  As $\ps(A)$ is bounded, there exists an $r > 0$
such that the point $z_0 + r \eit$ is also on the boundary of $\ps(A)$, and so
$\smin (F_A(r,\theta)) = \eps$.  Thus by~\cref{lem:eig_ham},
$\imagunit r$ is an eigenvalue of $C_\theta$.
Now suppose $C_\theta$ has some eigenvalue $\imagunit r$ with $r > 0$.
Again by \cref{lem:eig_ham}, $\eps$ must then be a singular value of $F_A(r, \theta)$
but not necessarily the smallest one.
Thus, $\smin( F_A(r, \theta) ) = \hat \eps \leq \eps$ and so it follows that $z_0 + r \eit$ is 
in $\ps[\hat \eps](A) \subseteq \ps(A)$.
\hfill
\end{proof}

For any $z_0 + r \eit \in \bd \ps(A)$ with $r > 0$,
clearly $\smin(F_A(r,\theta)) = \eps$, and so by \cref{lem:eig_ham}, $\imagunit r \in \Lambda(C_\theta)$.  
Hence, via computing all of the imaginary eigenvalues of $C_\theta$, 
\Cref{lem:eig_ham} provides a way to calculate all of the points in~$\rayth \cap \bd \ps(A)$.
However, note that if~$\imagunit r \in \Lambda(C_\theta)$ with $r > 0$, 
then $z_0 + r \eit$ may or may not be on $\bd \ps(A)$.
There are two reasons for this.  First, per the proof of \cref{cor:ray_thru},
$\eps$ may not be the smallest singular value of $F_A(r,\theta)$, 
in which case~$z_0 + r \eit \in \ps[\hat \eps](A)$ for some $\hat \eps < \eps$.
Second, there can exist a \emph{finite number} of points $z \in \ps(A)$ 
such that~$z \not\in \bd \ps(A)$ but~$\smin(F_A(r,\theta)) = \eps$ nevertheless holds; see \cite[p.~31]{ArmGV17}.

\cref{cor:ray_thru} can be stated more strongly, i.e., in terms of a line intersecting 
$\ps(A)$, since $\imagunit r_\mathrm{neg}$ with~$r_\mathrm{neg} < 0$
is an eigenvalue of $C_\theta$ if and only if $\imagunit |r_\mathrm{neg}|$ 
is an eigenvalue of $C_{\theta + \pi}$.
However, for developing the theoretical concepts for our algorithm, 
it will be more intuitive and simpler to work with the notion of rays emanating from $z_0$ for the time being.
For a code, it does make sense to take advantage of all the imaginary eigenvalues of $C_\theta$,
and we describe how this is done, along with other implementation details, in \cref{sec:implement}.
Regarding the spectrum of $C_\theta$, also note that since $C_\theta$ is Hamiltonian, 
its eigenvalues are symmetric with respect to the imaginary axis.  
Eigenvalues of \emph{real} Hamiltonian matrices have additional symmetry with respect to the real axis,
but this is generally not the case for the spectrum of $C_\theta$ due to 
$C_\theta$ being generically complex valued (even if $A$ is real).
Structure-preserving eigensolvers exist, e.g., \cite{BenMX98b}, that preserve this eigenvalue symmetry 
numerically.

\begin{remark}
While~\cref{lem:eig_ham} pertains to the eigenvalues of a single Hamiltonian matrix,
the analogous~\cite[Theorems~2.1, 3.1, and 4.1]{Mit21} used for computing Kreiss constants
and the distance to uncontrollability via interpolation-based globality certificates are in terms
of the eigenvalues of certain \emph{structured matrix pencils}.
For the case of Kreiss constants~\cite[Theorems~2.1 and 3.1]{Mit21}, the associated matrix pencils 
include parametric matrices that can be singular, and 
so these generalized eigenvalue problems cannot be reduced to standard eigenvalue problems.
However, the matrix pencil for the distance to uncontrollability does permit such a reduction, 
i.e.,~\cite[Theorem~4.1]{Mit21} can be simplified to be in terms of the eigenvalues of the complex Hamiltonian
matrix 
\beq
	\label{eq:dtu_ham}
	\begin{bmatrix} \imagunit \emit A & \widetilde B \\ \gamma I & \imagunit \eit A^* \end{bmatrix},
\eeq
where in~\eqref{eq:dtu_ham}, matrices $A$ and $\widetilde B$ and scalars $\gamma$ and $\theta$ are defined in~\cite[Theorem~4.1]{Mit21}.
\end{remark}

As we will soon see, we will need to preclude the possibility of zero being an eigenvalue of $C_\theta$.
The following straightforward result shows that our assumption on $\eps$ not being a singular value of~$A - z_0I$
accomplishes this.

\def\citeThmsTwo{cf. \cite[Theorems~2.4, 3.3, and 4.4]{Mit21}}
\begin{lemma}[\citeThmsTwo]
Let $A \in \C^{m \times m}$, $\eps \in \R$, $z_0 \in \C$, and $\theta \in \R$.  Then 
the matrix $C_\theta$ defined in~\eqref{eq:eigCD} has zero as an eigenvalue 
if and only if the matrix $(A - z_0 I)(A - z_0 I)^*$ has $\eps^2$ as an eigenvalue.
\label[lemma]{lem:zero_eig}
\end{lemma}
\begin{proof}
Since the blocks of $C_\theta$ are all square matrices of the same size, and 
the lower two blocks, $\eps I$ and $\imagunit \eit(A-z_0I)^*$, commute, we have that 
\[
	\det(C_\theta) = \det(-(A-z_0I)(A-z_0I)^* - (-\eps I)(\eps I)) = \det( (A-z_0I)(A-z_0I)^* - \eps^2 I),
\]
thus proving the if-and-only-if equivalence.
\hfill
\end{proof}

We are now ready to present the first major component in our construction of $d_\eps$.
Given $\eps \geq 0$ specifying the $\eps$-pseudospectrum of $A$, and $z_0 \in \C$ such 
that $\eps$ is not a singular value of $A - z_0 I$,
we define the function $\afn : (-\pi,\pi] \to [0,\pi^2]$ and 
associated set (cf. \cite[Equations~(2.4), (3.4), and (4.4)]{Mit21}):
\bseq
	\label{eq:psa_comps}
	\begin{align}
	\label{eq:psa_fn}
	\afn(\theta) &\coloneqq
	\min \{ \Arg(-\imagunit \lambda)^2 : \lambda \in \Lambda(C_\theta), \Re \lambda \leq 0\}, \\				 
	\label{eq:psa_set}
	\aroots &\coloneqq
	\{ \theta : \afn(\theta) = 0, \, \theta \in (-\pi,\pi] \},	
	\end{align}
\eseq
where $\Arg : \C \setminus \{0\} \to (-\pi,\pi]$ is the principal value argument function,
the matrix $C_\theta$ is defined in~\eqref{eq:eigCD},
and the term $\Arg(-\imagunit \lambda)$ in~\eqref{eq:psa_fn} is squared 
in order to smooth its value out when transitioning to/from zero.
We explain this in more detail later on, but the squaring is done in order
to make $\afn$ easier to approximate globally on its domain.
Note that the definition of~$\afn$
excludes eigenvalues in the open right half of the
complex plane since the spectrum of $C_\theta$ is symmetric with respect to the imaginary axis.

\def\citeThmsThree{cf. \cite[Theorems~2.7, 3.5, and 4.6]{Mit21}}
\begin{theorem}[Properties of $\afn$; \citeThmsThree]
Let $A \in \C^{m \times m}$, $\eps \geq 0$, and~$z_0 \in \C$ 
be such that $\eps$ is not a singular value of $A  - z_0I$.
Then, the function $\afn$ defined in~\eqref{eq:psa_fn} has the following properties:
\begin{enumerate}[leftmargin=*,label=\normalfont(\roman*)]
\item $\afn(\theta) \geq 0$ on its entire domain, i.e., $\forall \theta \in (-\pi,\pi]$,
\item $\afn(\theta) = 0 \quad \Longleftrightarrow \quad \exists r > 0$ such that $\imagunit r \in \Lambda(C_\theta) 
	\quad \Longleftrightarrow \quad \rayth \cap \ps(A) \neq \varnothing$,
\item $\afn$ is continuous on its entire domain, 
\item $\afn$ is differentiable at a point $\theta$ if the eigenvalue $\lambda \in \Lambda(C_\theta)$ attaining the value of $\afn(\theta)$ is unique and simple.
\end{enumerate}
Furthermore,  the following properties hold for the associated set $\aroots$ defined in \eqref{eq:psa_set}:
\begin{enumerate}[leftmargin=*,label=\normalfont(\roman*),resume]
\item $\eps = 0
	\quad \Longleftrightarrow \quad
	\mu(\aroots) = 0$, 
\item $\eps_1 < \eps_2
	\quad \Longleftrightarrow \quad
	\mu (\aroots[\eps_1]) < \mu (\aroots[\eps_2])$,
\item if $\eps > f_A(0,\theta)$ for any $\theta \in \R$, then $\mu(\aroots) = 2\pi$,
\item $\aroots$ can have up to $m$ connected components.
\end{enumerate}
\label{thm:props_ps}
\end{theorem}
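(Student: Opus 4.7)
The plan is to dispatch the eight claims in two groups: properties (i)--(iv) of the function $\psafn$, and properties (v)--(viii) of the zero set $\psaset(\eps)$, which will be essentially geometric consequences of the equivalence set up in (ii).

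Property (i) is immediate, since $\psafn(\theta)$ is a minimum of nonnegative squared principal arguments (and the minimizing set is nonempty because the sHH pencil $(C,D_\theta)$ of order $2m$ has half of its spectrum in the closed left half plane under the $\lambda \mapsto -\bar\lambda$ symmetry). For (ii), $\psafn(\theta)=0$ forces some $\lambda \in \Lambda(C,D_\theta)$ with $\Re\lambda\leq 0$ to satisfy $\Arg(-\imagunit\lambda)=0$, i.e., $-\imagunit\lambda\in(0,\infty)$; \cref{thm:zero_eig} excludes $\lambda=0$ via the hypothesis on $\eps$, so $\lambda=\imagunit r$ with $r>0$, and \cref{thm:ray_thru} supplies the pseudospectral interpretation. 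For (iii), I would invoke continuous dependence of the eigenvalues of $C_\theta = D_\theta^{-1}C$ on $\theta$ (justified by \cref{thm:eigDinvC}) together with the observation that $\lambda \mapsto \Arg(-\imagunit\lambda)^2$ is continuous on $\{\lambda:\Re\lambda\leq 0\}\setminus\{0\}$, since multiplication by $-\imagunit$ rotates this half plane into the closed upper half plane, keeping $\Arg$ away from its branch cut on the negative real axis; continuity of the minimum over a continuously varying finite multiset then transfers to $\psafn$. For (iv), standard analytic perturbation theory for a unique simple eigenvalue yields real-analytic dependence of that eigenvalue on $\theta$, hence smoothness of $\psafn$ in a neighborhood.

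For the set-level statements I would exploit the characterization $\psaset(\eps) = \{\theta\in(-\pi,\pi] : \mathcal{R}_{z_0}(\theta)\cap\Lambda_\eps(A)\neq\varnothing\}$ from (ii). For (v), if $\eps=0$ then $\Lambda_0(A)=\Lambda(A)$ has at most $m$ elements, none equal to $z_0$ (since $A-z_0I$ is invertible), so $\psaset(0)$ is finite and of measure zero; conversely, if $\eps>0$, $\Lambda_\eps(A)$ has nonempty interior (containing an open neighborhood of each eigenvalue), and the angular projection $z\mapsto\Arg(z-z_0)$ is an open map on $\C\setminus\{z_0\}$, so $\psaset(\eps)$ contains an open arc of positive measure. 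Property (vii) follows by noting that $\eps > \smin(A-z_0I) = f_A(0,\theta)$ places $z_0$ in $\interior\Lambda_\eps(A)$, so every outgoing direction from $z_0$ immediately meets $\Lambda_\eps(A)$, giving $\psaset(\eps)=(-\pi,\pi]$. Property (vi) follows from the monotonicity $\Lambda_{\eps_1}(A)\subseteq\Lambda_{\eps_2}(A)$; the easy direction $\mu_1<\mu_2 \Rightarrow \eps_1<\eps_2$ is the contrapositive of this inclusion, and for the other direction I would argue that when $\eps_1<\eps_2$ the difference $\Lambda_{\eps_2}(A)\setminus\Lambda_{\eps_1}(A)$ contains a nonempty open set whose angular image introduces a positive-measure set of new rays into $\psaset(\eps_2)$. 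For (viii) I would combine two facts: $\Lambda_\eps(A)$ has at most $m$ connected components (each must contain at least one eigenvalue, a standard pseudospectral result), and when $z_0\notin\Lambda_\eps(A)$ (the case $z_0\in\interior\Lambda_\eps(A)$ is subsumed by (vii) with a single trivial arc), each component is a connected subset of $\C\setminus\{z_0\}$, whose continuous angular image is a connected subset of $(-\pi,\pi]$ identified with $S^1$; the union of at most $m$ connected subsets of $S^1$ has at most $m$ connected components.

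The main obstacle will be the forward strict direction of (vi): showing that raising $\eps_1$ to $\eps_2$ adds a positive-measure set of angles, rather than merely preserving the same set. The delicate case is when $z_0$ lies exterior to both pseudospectra, where I must produce new boundary pieces of $\Lambda_{\eps_2}(A)$ visible from $z_0$ along directions not already covered by $\Lambda_{\eps_1}(A)$; this likely proceeds from the fact that $\bd\Lambda_{\eps_1}(A)$ sits strictly inside $\interior\Lambda_{\eps_2}(A)$ plus an openness/projection argument. Edge cases where both measures already equal $2\pi$ (subsumed by (vii)) or where $\eps_1=0$ (subsumed by (v)) will need to be handled separately.
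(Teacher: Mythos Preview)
Your approach for (i)--(v) and (vii) matches the paper's essentially line for line. For (viii) you argue directly, observing that the angular projection $z\mapsto\Arg(z-z_0)$ is continuous on $\C\setminus\{z_0\}$ and hence sends each connected pseudospectral component to a connected arc of $S^1$; the paper instead argues by contradiction, supposing $\psaset(\eps)$ has more than $m$ components and producing two rays $\mathcal{R}_{z_0}(\theta_1)$, $\mathcal{R}_{z_0}(\theta_\mathrm{out})$ (together with $z_0$) that separate the plane and thereby disconnect a single pseudospectral component. Both routes are fine; yours is a bit more direct.

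Your caution about (vi) is well placed, and in fact the paper's argument has the same gap you identify: it only establishes the set inclusion $\psaset(\eps_1)\subset\psaset(\eps_2)$ and its converse, never addressing why the strict \emph{measure} inequality follows. More to the point, the edge case you flag is not something that can be ``handled separately'': if $\eps_1<\eps_2$ with both exceeding $\smin(A-z_0I)$ (permitted, since the hypothesis only forbids $\eps$ from equalling a singular value), then by (vii) both measures equal $2\pi$ and the forward implication of (vi) is simply false as written. So you should not expect to close this gap within the stated hypotheses; the paper is tacitly working in the regime $z_0\notin\Lambda_{\eps_2}(A)$ with $\psaset(\eps_2)\neq(-\pi,\pi]$, and even there the passage from strict set inclusion to strict measure inequality still requires the extremal-tangent-ray argument you sketch in your final paragraph.
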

\begin{proof}
Noting that $-\imagunit \lambda$ in \eqref{eq:psa_fn} is always in the (closed)
upper half of the complex plane, 
statements~(i) and (ii) hold by the definition of $\afn$ and \cref{cor:ray_thru}. 
Statement~(iii) follows from 
the continuity of eigenvalues and our assumption that $\eps$ is not a singular value of $A-z_0I$,
equivalently \mbox{$\eps^2 \not\in \Lambda((A  - z_0I)(A - z_0I)^*)$}, and thus, by \cref{lem:zero_eig}, $0 \not\in \Lambda(C_\theta)$ is ensured for any~$\theta$.
Statement~(iv) follows from standard perturbation theory for simple eigenvalues and by the definition of~$\afn$.

Now turning to $\aroots$, 
either $z_0 \in \interior \ps(A)$ or $z_0 \not\in \ps(A)$ must hold since 
our assumption on~$\eps$ precludes $z_0$ from being a boundary point.
If $\eps > f_A(0,\theta)$, then $z_0 \in \interior \ps(A)$, which in turn implies
that $\rayth \cap \ps(A) \neq \varnothing$ for all $\theta$,
thus proving (vii).
Now assume $z_0 \not\in \ps(A)$.
Statement (viii) is a consequence of the well-known fact that 
for any matrix~$A \in \C^{m \times m}$, its $\eps$-pseudospectrum has at most $m$ connected components.
For any component~$\mathcal{G}$ of $\ps(A)$, by connectedness and (ii),
it is clear that $\mathcal{G}$ is associated with a single interval $\mathcal{I} \subseteq (-\pi,\pi]$
such that $\afn(\theta) = 0$ if and only if $\rayth \cap \ps(A) \neq \varnothing$.
Since $\aroots$ is simply the union of those intervals associated with the components of $\ps(A)$,
of which there can be at most $m$, $\aroots$ also has at most $m$ components, thus proving (viii).
Statement (vi) follows by noting that $\ps[\eps_1](A) \subset \ps[\eps_2](A)$ is equivalent to 
$\aroots[\eps_1] \subset \aroots[\eps_2]$.
Since $\rayth \cap \ps[\eps_1](A) \neq \varnothing$ implies that
\mbox{$\rayth \cap \ps[\eps_2](A) \neq \varnothing$}, 
it follows that $\afn[\eps_1](\theta) = 0$ implies $\afn[\eps_2](\theta) = 0$, and so 
$\aroots[\eps_1] \subset \aroots[\eps_2]$.
Now suppose that $\aroots[\eps_1] \supset \aroots[\eps_2]$ and let $\theta \in \aroots[\eps_1] \setminus \aroots[\eps_2]$;
hence $\afn[\eps_1](\theta) = 0$ but $\afn[\eps_2](\theta) > 0$.  Then $\rayth$ intersects~$\ps[\eps_1](A)$ but not $\ps[\eps_2](A)$,
and so $\ps[\eps_1](A) \subset \ps[\eps_2](A)$ cannot hold, a contradiction.
Finally, for (v),
if $\eps = 0$, $\ps(A) = \Lambda(A)$, and so $\rayth \cap \ps(A) \neq \varnothing$
for at most $m$ different angles.
As there can be at most $m$ connected components of $\aroots$, if $\mu(\aroots) = 0$ holds, then~$\eps=0$.
\hfill
\end{proof}

Per \cref{thm:props_ps}, $\afn$ is a continuous function and 
$\afn(\theta) = 0$ if and only if \mbox{$\rayth \cap \ps(A) \neq \varnothing$}.
\emph{Thus, by finding roots of $\afn$, we find rays which intersect the $\eps$-pseudospectrum of $A$},
our first step toward finding regions where $\ps(A)$ and $\ps(B$) overlap.
For an illustration of this correspondence, see \cref{fig:comps_ex}, 
where~$\bfn$, the analogue of $\afn$ for matrix $B$, is also plotted.

The properties of $\afn$ listed in \cref{thm:props_ps} show that it is reasonably well behaved.
Satisfying the assumption that $\eps$ is not a singular value of $A-z_0I$ can be trivially met,
e.g., just by choosing~$z_0$ with a bit of randomness.
The dominant cost of evaluating $\afn$ at a point $\theta$ is computing the 
spectrum of~$C_\theta$, i.e., $\bigO(m^3)$ work.
Relative to Gu and Overton's $\seplamD$ algorithm, this is a negligible cost.  
To find the roots of $\afn$, 
we can approximate $\afn$ using Chebfun,
which is why we defined $\afn$ using the squared term $\Arg(-\imagunit \lambda)^2$ instead of just $\Arg(-\imagunit \lambda)$.
As will be made clear in~\cref{sec:support},~$\afn$ transitioning to/from zero corresponds to two (or possibly more non-generically) 
eigenvalues of $C_\theta$ coalescing on the positive portion of the imaginary axis.
Without this squaring, $\afn$ would generally be non-Lipschitz at such transition points 
and thus it could be difficult and/or expensive to approximate via interpolation; 
the squaring smooths out this high rate of change so that $\afn$ is easier to approximate.
Although the analogues of $a_\eps$ and \cref{thm:props_ps} 
that appeared in \cite{Mit21} for computing Kreiss constants and the distance to uncontrollability
were sufficient to develop interpolation-based globality certificates for those quantities,
for $\seplamD$, $a_\eps$ and \cref{thm:props_ps} are insufficient.

\def\imgscale{0.422}

\begin{figure}[t]
\centering
\subfloat[$\ps(A)$ and $\ps(B)$]{
\includegraphics[scale=\imgscale,trim={0.0cm 0.0cm 0.0cm 0.0cm},clip]{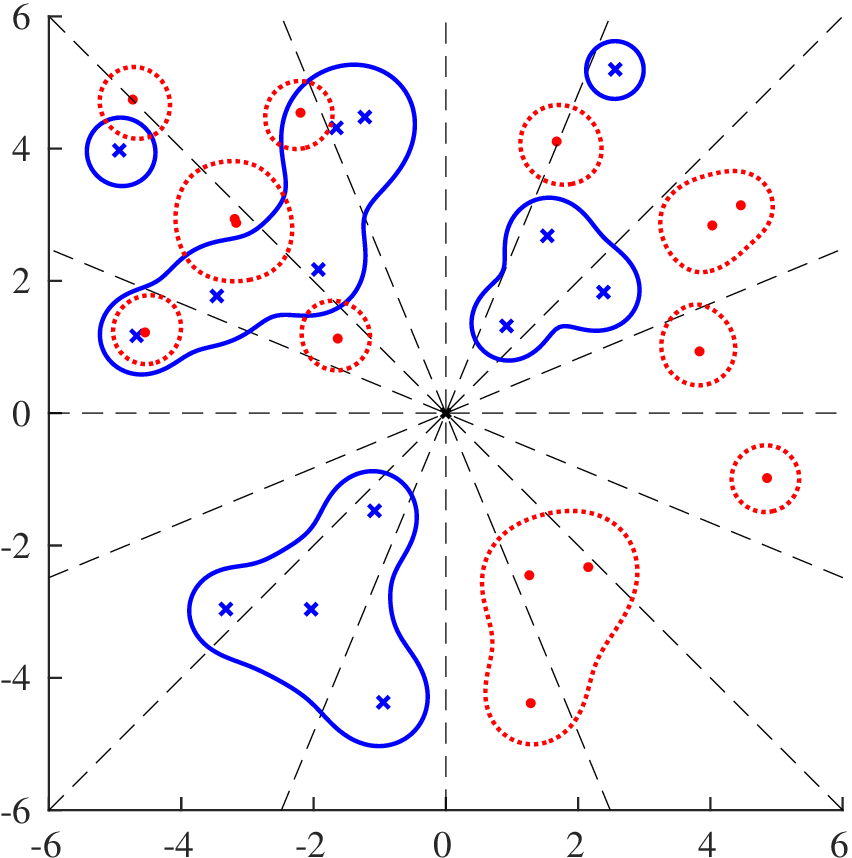} 
} 
\subfloat[$\afn(\theta)$, $\bfn(\theta)$, and $\olfn(\theta)$]{
\includegraphics[scale=\imgscale,trim={0.0cm 0.0cm 0.0cm 0.0cm},clip]{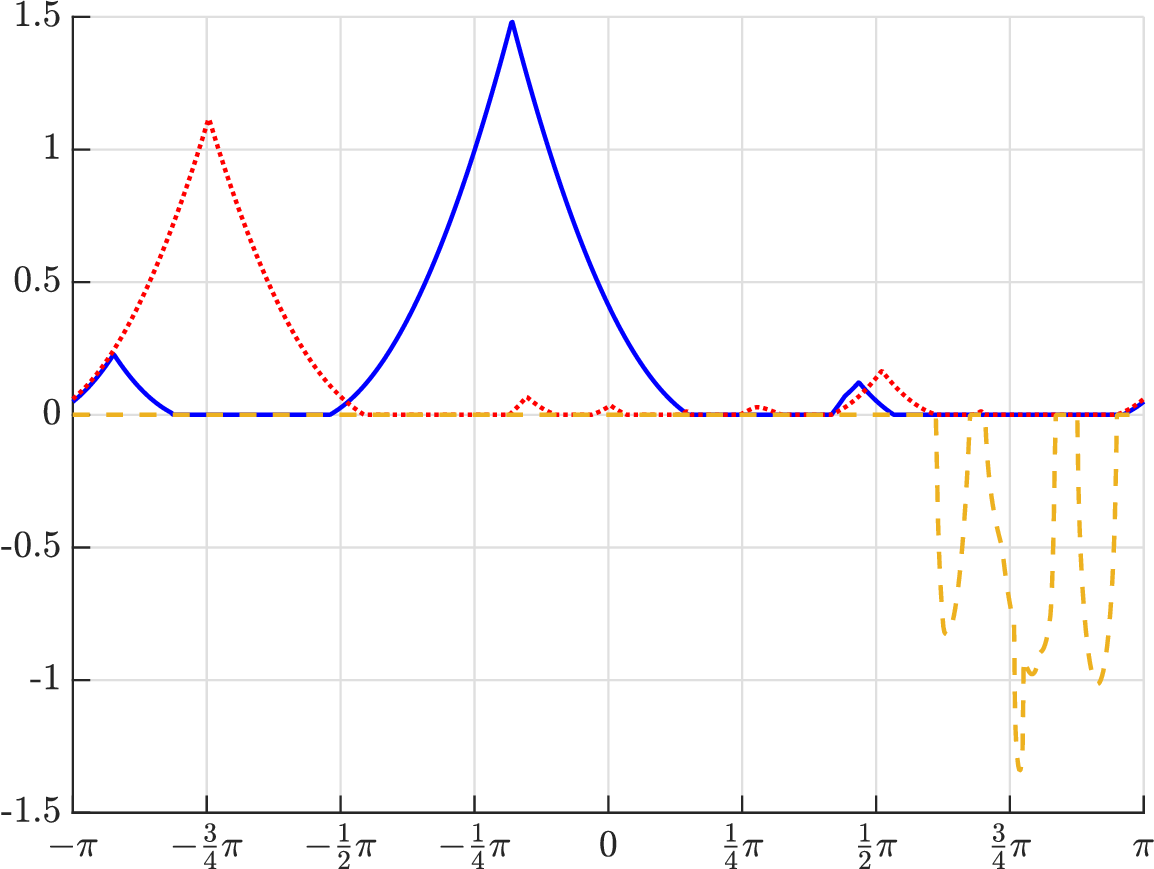} 
}
\caption{For two randomly generated matrices $A,B \in \C^{14 \times 14}$,
the left pane shows their eigenvalues (respectively x's and dots), and
$\ps(A)$ and $\ps(B)$ (respectively solid and dotted contours)
 for~\mbox{$\eps = 0.3 > \seplamD$}.
The search point $z_0$ is the origin; rays emanating from it are depicted by dashed lines.  
The right pane shows corresponding plots of $\afn$, $\bfn$ 
(respectively solid and dotted curves), and~$\olfn$ (dashed),
where $\afn$ is defined in \eqref{eq:psa_fn}, $\bfn$ is its analogue for matrix $B$,
and~$\olfn$ is defined in \eqref{eq:suff_fn}.
On the left, 
rays in the lower left quadrant only intersect $\ps(A)$ or neither $\eps$-pseudospectrum,
while rays in the lower right quadrant only intersect $\ps(B)$ or neither $\eps$-pseudospectrum.
Correspondingly, for $(-\pi,-\tfrac{1}{2}\pi]$ on the right, we see that 
$\afn$ has zeros but $\bfn$ is always positive, and vice versa for~$(-\tfrac{1}{2}\pi,0]$.
Meanwhile, there exist rays in the upper right quadrant that pass through both 
$\ps(A)$ and $\ps(B)$, but $\ps(A)$ and $\ps(B)$ do not overlap in this region;
thus, on the right for $(0,\tfrac{1}{2}\pi]$, we see that $\afn$ and $\bfn$ do have zeros in common,
but $\olfn$ is equal to zero on this interval.
Finally, in the upper left quadrant, $\interior \ps(A)$ and $\interior \ps(B)$ 
do in fact overlap, and so on the right, we see that $\afn$ and $\bfn$ have zeros in common 
and $\olfn$ is indeed negative on a subset of $(\tfrac{1}{2}\pi,\pi]$ with positive measure.
}
\label{fig:comps_ex}
\end{figure}

\section{Locating pseudospectral overlap}
\label{sec:overlap}
As part of locating regions where $\ps(A) \cap \ps(B) \neq \varnothing$,
we will also need to locate the components of $\ps(B)$ with respect to the same ``search point" $z_0$ and given value of~$\eps$.
Thus, for matrix $B$, let $f_B$ and $F_B$ respectively denote the analogues of $f_A$ and $F_A$ defined in~\eqref{eq:fA},
and similarly, let~$\bfn$ and $\broots$ be respective analogues of $\afn$ and $\aroots$ defined in \eqref{eq:psa_comps}.
For matrix~$A$, we continue to use~$C_\theta$ to denote its associated Hamiltonian matrix defined in~\eqref{eq:eigCD},
while we use~$S_\theta$ to denote the analogue Hamiltonian matrix for~$B$, as both matrices will be needed.
Per the assumption of~\cref{thm:props_ps}, we now need to assume that~$\eps$ is not a singular value of either $A - z_0I$ or~$B - z_0I$,
which again, can be easily satisfied by choosing $z_0$ with some randomness.
In establishing tools for locating 
pseudospectral overlap, we will make use of the following elementary result.

\begin{lemma}
Let $\setA,\setB \subset \R$ be such that $\setA$ and $\setB$
respectively consist of $m$ and $n$ connected components.
Then $\setA \cap \setB$ 
can have up to $m + n - 1$ connected components.
\label[lemma]{lem:num_comp}
\end{lemma} 
\begin{proof}
Let $\setA = \setA_1 \cup \cdots \cup \setA_m$, where
each $\setA_j$ is a connected component of $\setA$ and $\setA_j \cap \setA_k = \varnothing$
for all~$j \neq k$, and in an analogous fashion, let \mbox{$\setB = \setB_1 \cup \cdots \cup \setB_n$}.
Without loss of generality, assume that $m \leq n$.
If $m=1$, suppose that the claim is not true, i.e., that $\setA \cap \setB$ has more than~$n$ components.
Then there exists at least one pair of numbers $x$ and $y$ that are in different components 
of~$\setA \cap \setB$ but must be in the same component $\setB_j$ of $\setB$.
 However, by connectedness of the components of~$\setA$ and $\setB$, 
 we have that $[x,y] \subset \setA_1 = \setA$ and $[x,y] \subset \setB_j$.
 Therefore $[x,y] \subset \setA \cap \setB$, contradicting that~$x$ and~$y$ are in different components
 of $\setA \cap \setB$.
 For the inductive step, now assume that the claim holds when~$\setA$ consists
 of $j$ components, for $j = 1,\ldots,m-1$ and $j < n$, 
 and suppose that $\setA$ has $m$ components.
Let~$s = \tfrac{1}{2}(a_\mathrm{L} + a_\mathrm{R})$, where 
$a_\mathrm{L} = \sup_{a \in \setA_{m-1}} a$ and \mbox{$a_\mathrm{R} = \inf_{a \in \setA_m} a$},
and define $\setB_\mathrm{L} \coloneqq \{ b : b \in \setB, b < s \}$ and
\mbox{$\setB_\mathrm{R} \coloneqq \{ b : b \in \setB, b > s \}$}.
Clearly $\setB_\mathrm{L}$ and $\setB_\mathrm{R}$ are disjoint 
and~$\setB_\mathrm{L} \cup \setB_\mathrm{R} = \setB \setminus \{s\}$.
Letting $n_\mathrm{L}$ and~$n_\mathrm{R}$ denote the respective number of connected components
of~$\setB_\mathrm{L}$ and $\setB_\mathrm{R}$, it follows that 
$n_\mathrm{L} + n_\mathrm{R} = n$ if~$s \not \in \interior \setB$ and $n_\mathrm{L} + n_\mathrm{R} = n + 1$
otherwise.
Applying the inductive hypothesis, $\{\setA_1 \cup \cdots \cup \setA_{m-1}\} \cup \setB_\mathrm{L}$
has at most $(m-1) + n_\mathrm{L} - 1$ connected components,
while $\setA_m \cup \setB_\mathrm{R}$ has at most $n_\mathrm{R}$ connected components.
Noting that 
$\{\setA_1 \cup \cdots \cup \setA_{m-1}\} \cup \setB_\mathrm{L}$ 
and~$\setA_m \cup \setB_\mathrm{R}$ are also disjoint and their union is~$\setA \cap \setB$, since~$s \not \in \setA \cap \setB$, 
it follows that $\setA \cap \setB$ has at most $(m-1) + n_\mathrm{L} - 1 + n_\mathrm{R} \leq m + n -1$ connected components.
The bound is tight, as
one can construct $\setA$ such that $\setA_j$ intersects both $\setB_j$ and $\setB_{j+1}$ for~$j = 1,\ldots,m-1$,
while $\setA_m$ intersects~$\setB_j$ for~$j=m-1,\ldots,n$.
\hfill
\end{proof}

\begin{theorem}[Properties of $\afn + \bfn$ and a necessary condition for overlap]
Let $A \in \C^{m \times m}$, \mbox{$B \in \C^{n \times n}$}, $\eps \geq 0$, and $z_0 \in \C$ be such that 
$\eps$ is not a singular value of either \mbox{$A - z_0 I$} or $B - z_0 I$, and let $\rayth$ be the ray defined in~\eqref{eq:ray}.
Furthermore, let \mbox{$\abroots \coloneqq \{ \theta \in (-\pi,\pi] : \afn(\theta) + \bfn(\theta) = 0 \}$},
where $\afn$ is defined in \eqref{eq:psa_fn} for $A$ and $\bfn$ is its analogue for $B$.
Then the following statements hold:
\begin{enumerate}[leftmargin=*,label=\normalfont(\roman*)]
\item if $\rayth \cap \ps(A) \cap \ps(B) \neq \varnothing$,
	then $\afn(\theta) + \bfn(\theta) = 0$,
\item if $\afn(\theta) + \bfn(\theta) = 0$, 
	then $\rayth \cap \ps(A) \neq \varnothing$
	and 
	$\rayth \cap \ps(B) \neq \varnothing$,
\item $\afn + \bfn$ is continuous on its entire domain $(-\pi,\pi]$,
\item $\afn + \bfn$ is differentiable at a point $\theta$ if $\afn$ and $\bfn$
	are differentiable at $\theta$,
\item $\abroots$ can have up to $m + n - 1$ connected components.
\end{enumerate}
\label{thm:necc}
\end{theorem}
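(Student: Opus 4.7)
The plan is to treat the five claims in turn. For (i) and (ii), I would exploit the non-negativity $\psafn(\theta), \psbfn(\theta) \geq 0$ guaranteed by property~i of \cref{thm:props_ps}: the sum $\psafn(\theta) + \psbfn(\theta)$ vanishes at a given $\theta$ if and only if both summands do. Combining this with property~ii of \cref{thm:props_ps}, which equates vanishing of $\psafn(\theta)$ with $\mathcal{R}_{z_0}(\theta) \cap \Lambda_\eps(A) \neq \varnothing$ (and likewise for $\psbfn$ and $B$), delivers both (i) and (ii) at once.

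For (iii) and (iv), I would invoke properties~iii and~iv of \cref{thm:props_ps} applied separately to $A$ and $B$: both $\psafn$ and $\psbfn$ are continuous on $(-\pi,\pi]$, and each is differentiable at a point at which the corresponding minimizing eigenvalue of the associated sHH pencil is unique and simple. Continuity of the sum is then immediate, and the sum is differentiable wherever both summands are, giving (iii) and (iv) with essentially no additional work.

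Claim (v) is the substantive part and is where I expect the only real work. The non-negativity argument used for (i) and (ii) gives
\[
\mathcal{Z}(\eps) = \psaset(\eps) \cap \psbset(\eps).
\]
Property~viii of \cref{thm:props_ps}, applied to $A$ and to $B$, bounds the number of connected components of $\psaset(\eps)$ by $m$ and of $\psbset(\eps)$ by $n$, viewed as subsets of $(-\pi,\pi] \subset \R$. The bound of $m + n - 1$ components for $\mathcal{Z}(\eps)$ then follows from \cref{lem:num_comp} applied with $\mathcal{A} = \psaset(\eps)$ and $\mathcal{B} = \psbset(\eps)$. The only point requiring a moment's care is to confirm that \cref{lem:num_comp}'s hypothesis of arbitrary subsets of $\R$ is satisfied by our subsets of the interval $(-\pi,\pi]$, which is immediate since $(-\pi,\pi] \subset \R$ and the component counts in property~viii of \cref{thm:props_ps} are stated in exactly this setting.
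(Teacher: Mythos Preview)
Your proposal is correct and follows essentially the same approach as the paper's own proof: statements (i)--(iv) are reduced to the corresponding properties of \cref{thm:props_ps} applied to $A$ and $B$ separately, and (v) is obtained by writing $\mathcal{Z}(\eps) = \psaset(\eps) \cap \psbset(\eps)$ and invoking \cref{lem:num_comp} together with the component bound from \cref{thm:props_ps}. There is nothing to add.
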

\begin{proof}
The assumption in (i) implies that $\rayth \cap \ps(A) \neq \varnothing$ and 
\mbox{$\rayth \cap \ps(B) \neq \varnothing$}
hold, and so~\mbox{$\afn(\theta)=0$} and $\bfn(\theta)=0$ by~\cref{thm:props_ps}~(ii).
Statements (ii)--(iv) are direct consequences of~\cref{thm:props_ps}~(ii)--(iv).
For (v), note that $\abroots = \aroots \cap \broots$,
where $\aroots$ is defined in \eqref{eq:psa_set} for $A$ and~$\broots$ is its analogue for $B$.
As $\aroots$ and $\broots$ respectively have up to $m$ and $n$ connected components
by \cref{thm:props_ps}, statement~(v) follows from~\cref{lem:num_comp}.
\hfill
\end{proof}

Given an angle $\theta$, \cref{thm:necc} states that $\afn(\theta) + \bfn(\theta) = 0$ 
is a necessary condition for the pseudospectra $\ps(A)$ and $\ps(B)$ to 
overlap somewhere along the ray $\rayth$, but 
it is easy to see that this is not a sufficient condition for such overlap.
To obtain such a sufficient condition, 
we now define the function $\olfn : (-\pi,\pi] \to (-\infty,0]$ and an associated set:
\bseq
	\begin{align}
	\label{eq:suff_fn}
	\olfn(\theta) &\coloneqq -\mu \left(\rayth \cap \ps(A) \cap \ps(B) \right), \\
	\label{eq:suff_set}
	\olset &\coloneqq \{ \theta \in (-\pi,\pi] :  \olfn(\theta) < 0 \}.
	\end{align}
\eseq
As $\olset$ is open, it is measurable, and 
via \cref{lem:eig_ham}, we know that the ray $\rayth$ 
can intersect at most~$2m$ and~$2n$ boundary points, respectively, of $\ps(A)$ and $\ps(B)$.
Thus, the number of connected components of $\rayth \cap \ps(A)$ is finite,
as is the number of connected components of~\mbox{$\rayth \cap \ps(B)$};
hence, the intersection in the definition of $\olfn$ is measurable.
Moreover, \cref{lem:eig_ham} allows us to determine these intervals (or isolated points), and so the value of $\olfn(\theta)$
can be computed simply by calculating how much the intervals of 
$\rayth \cap \ps(A)$ overlap those of $\rayth \cap \ps(B)$;
we explain exactly how this is done in~\cref{sec:implement}.
In addition to $\afn$ and $\bfn$, function $\olfn$ is also plotted in~\cref{fig:comps_ex}.

\begin{theorem}[Properties of $\olfn$ and a sufficient condition for overlap]
Let $A \in \C^{m \times m}$, \mbox{$B \in \C^{n \times n}$}, $\eps \geq 0$, $z_0 \in \C$, $\theta \in \R$, 
and $\rayth$ be the ray defined in \eqref{eq:ray}.
Then for the function $\olfn$ defined in~\eqref{eq:suff_fn}, the following statements hold:
\begin{enumerate}[leftmargin=*,label=\normalfont(\roman*)]
\item $\olfn(\theta) < 0
	\quad \Longleftrightarrow \quad
	\rayth \cap \interior \ps(A) \cap \interior \ps(B) \neq \varnothing$,
\item if $\afn(\theta)+\bfn(\theta) > 0$, then $\olfn(\theta) = 0$,
\item $\olfn$ is continuous on its entire domain $(-\pi,\pi]$, 
\item $\olfn$ is differentiable at a point $\theta$ if $\forall r > 0$ such that 
	$z_0 + r\eit \in \bd \ps(A)$, 
	$\imagunit r$ is a simple eigenvalue of $C_\theta$, 
	and $\forall r > 0$ such 
	that $z_0 + r\eit \in \bd \ps(B)$, 
	$\imagunit r$ is a simple eigenvalue of~$S_\theta$.
\end{enumerate}
Furthermore, the following statements hold for the associated set $\olset$ defined in \eqref{eq:suff_set}:
\begin{enumerate}[leftmargin=*,label=\normalfont(\roman*),resume]
\item $\eps \leq \seplamD 
	\quad \Longleftrightarrow \quad
	\mu(\olset) = 0$,
\item $\seplamD < \eps_1 < \eps_2 
	\quad \Longleftrightarrow \quad
	0 < \mu(\olset[\eps_1]) < \mu(\olset[\eps_2])$,
\item $\min_{\theta \in (-\pi,\pi]}  \olfn(\theta) < 0
	\quad \Longleftrightarrow \quad
	0 < \mu(\olset)
	\quad \Longleftrightarrow \quad
	\seplamD < \eps$.
\end{enumerate}
\label{thm:suff}
\end{theorem}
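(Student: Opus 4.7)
The plan is to establish the seven items in order, using the ray-based characterization from \cref{thm:ray_thru} to reduce each one-angle question to a finite combinatorial problem about intervals on the ray $\mathcal{R}_{z_0}(\theta)$. For statement i, I observe that by \cref{thm:eigCD} both $\mathcal{R}_{z_0}(\theta) \cap \Lambda_\eps(A)$ and $\mathcal{R}_{z_0}(\theta) \cap \Lambda_\eps(B)$ are finite unions of closed intervals, with endpoints determined by the positive imaginary eigenvalues of $(C,D_\theta)$ and $(S,T_\theta)$, respectively. Their intersection has positive one-dimensional Lebesgue measure if and only if some component interval has positive length, which forces the relative interior of that interval (lying in $\interior \Lambda_\eps(A) \cap \interior \Lambda_\eps(B)$) to be nonempty; the converse holds because the interior intersection is open in $\C$, so any ray meeting it does so on a segment of positive length. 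Statement ii is then immediate from \cref{thm:props_ps}: since $\psafn$ and $\psbfn$ are nonnegative, $\psafn(\theta)+\psbfn(\theta) > 0$ means at least one of the two ray-pseudospectrum intersections is empty, so their triple intersection is empty and $l_\eps(\theta) = 0$.

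For continuity (iii), the interval endpoints correspond to positive imaginary eigenvalues of $(C,D_\theta)$ and $(S,T_\theta)$, and the assumption that $\eps$ is not a singular value of $A - z_0 I$ or $B - z_0 I$ excludes zero as an eigenvalue of either pencil by \cref{thm:zero_eig}. Continuity of eigenvalues in $\theta$ then yields continuous dependence of the endpoints, away from transition points where a pair of imaginary eigenvalues coalesces and leaves the imaginary axis; at such points the corresponding interval shrinks continuously to zero length, so the length of each ray-pseudospectrum intersection, and hence of their intersection, remains continuous. For differentiability (iv), the simplicity hypotheses on the eigenvalues of $(C,D_\theta)$ and on the associated singular values of $F_A(r,\theta)$ let standard first-order perturbation theory make each relevant endpoint a smooth function of $\theta$ on some neighborhood; on that neighborhood the combinatorial pattern of overlapping intervals is locally constant, so $l_\eps$ is a finite smooth sum of differences of endpoint radii. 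The main obstacle in this section is handling the transition points in iii where the interval count changes; the argument is local and relies on the fact that any vanishing interval shrinks continuously in length to zero.

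The set-theoretic statements v, vi, and vii then follow by soft arguments combining the above with the pseudospectral definition of $\seplamD$ in \eqref{eq:min_ed}. For v, if $\eps \leq \seplamD$ then $\interior \Lambda_\eps(A) \cap \interior \Lambda_\eps(B) = \varnothing$, so statement i gives $l_\eps \equiv 0$ and hence $\mathcal{L}(\eps) = \varnothing$; conversely, continuity of $l_\eps$ makes $\mathcal{L}(\eps)$ open, so $\mu(\mathcal{L}(\eps)) = 0$ forces $\mathcal{L}(\eps) = \varnothing$, and any $\hat z$ in the interior intersection would then generate a ray from $z_0$ meeting that open set on a positive-length segment, contradicting $l_\eps \equiv 0$. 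For vi, monotonicity $\mathcal{L}(\eps_1) \subseteq \mathcal{L}(\eps_2)$ is immediate from $\Lambda_{\eps_1}(\cdot) \subseteq \Lambda_{\eps_2}(\cdot)$; strict increase when $\seplamD < \eps_1 < \eps_2$ follows because every boundary point of $\Lambda_{\eps_1}(A) \cap \Lambda_{\eps_1}(B)$ lies in the strictly larger open set $\interior \Lambda_{\eps_2}(A) \cap \interior \Lambda_{\eps_2}(B)$, producing a genuine two-dimensional enlargement that is probed by an angular set of rays of strictly larger measure. Finally, vii packages these together: since $\mathcal{L}(\eps)$ is open, $\min_\theta l_\eps(\theta) < 0$ if and only if $\mathcal{L}(\eps) \neq \varnothing$, if and only if $\mu(\mathcal{L}(\eps)) > 0$, and by v the latter is equivalent to $\seplamD < \eps$.
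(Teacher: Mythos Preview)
Your proposal is correct and follows essentially the same approach as the paper, item by item. One point worth flagging: for continuity (iii), you invoke the assumption that $\eps$ is not a singular value of $A - z_0 I$ or $B - z_0 I$ (via \cref{thm:zero_eig}) to exclude zero as an eigenvalue of the pencils, but this hypothesis is not actually listed in the statement of \cref{thm:suff}, even though it is a standing assumption of the surrounding section. The paper sidesteps this by arguing purely geometrically---continuity of $l_\eps$ follows because the boundaries of $\eps$-pseudospectra vary continuously---without tracking eigenvalues of $(C,D_\theta)$ and $(S,T_\theta)$ at all. Your eigenvalue-based argument is more explicit and arguably more honest about where the endpoints come from, but strictly speaking it assumes more than the theorem grants. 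For (vi), your argument for strict monotonicity of $\mu(\mathcal{L}(\cdot))$ is somewhat heuristic, but the paper's proof is no more detailed here, simply pointing to the analogous argument in \cref{thm:props_ps}.
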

\begin{proof}
Statement~(i) simply follows from the definition of $\olfn$ given in \eqref{eq:suff_fn}
and noting that the intersection $\rayth \cap \interior \ps(A) \cap \interior \ps(B)$
is either empty or consists of a finite number of open intervals in~$\R$. 
For~(ii), if $\afn(\theta)+\bfn(\theta) > 0$, then  
either $\rayth \cap \ps(A) = \varnothing$ or $\rayth \cap \ps(B) = \varnothing$
holds by~\cref{thm:props_ps}~(ii),
and so \mbox{$\olfn(\theta) = 0$}.
Statement~(iii) follows from the fact the boundaries of $\eps$-pseudospectra vary continuously with respect to~$\eps$,
which is clear from~\eqref{eq:pseudo}, and via~\cref{lem:eig_ham}, do not contain any straight line segments.
Under the assumptions in~(iv), standard perturbation theory for simple eigenvalues applies.

For $\olset$, (v) is a direct consequence of (i) and the definition of $\seplamD$ given in \eqref{eq:min_ed},
as \mbox{$\interior \ps(A) \cap \interior \ps(B) = \varnothing$} if and only if $\eps \leq \seplamD.$
Statement (vi) follows by a similar argument to the proof of~\cref{thm:props_ps}~(vi), 
with $\mu(\olset[\eps_1]) > 0$ if and only if $\eps_1 > \seplamD$ following from~(i).
Statement (vii) is simply a combination of (i) and (vi).
\hfill
\end{proof}

From \cref{thm:suff}~(vii), it is clear that if $\olfn$ can be sufficiently well approximated,
then one can determine whether or not \mbox{$\eps > \seplamD$} holds.
Moreover, as we fully explain in \cref{sec:implement}, via \cref{lem:eig_ham}, knowledge of such angles can be used 
to compute points on the $\eps$-level set of~$\fD$, points which can be used to restart optimization
to find a better (lower) estimate for~$\seplamD$.  
Thus, one may wonder what the point was of considering $\afn + \bfn$ and deriving its associated necessary condition given
in \cref{thm:necc}.
There is in fact a very important reason for this.  

As $\olfn$ is constant (zero) whenever it is not negative,
it can, ironically, be a difficult function to approximate.
The pitfall here is that regions where a function appears to be constant may be \emph{undersampled} by interpolation software,
precisely because the computed estimate of the error on such regions will generally be exactly zero, e.g., 
because the software initially builds a constant interpolant for the region in question.
Thus, there is a concern that approximating $\olfn$ via interpolation may miss regions where~$\olfn(\theta) < 0$ holds,
particularly if these regions are small compared to the regions where $\olfn(\theta) = 0$.
Our solution to this difficulty is to replace $\olfn$ by another non-constant function whenever 
$\olfn(\theta)=0$ holds.
We first consider the continuous function $\tfn : (-\pi,\pi] \to \R$ 
\bseq
	\begin{align}
	\label{eq:cert_simple}
	\tfn(\theta) &\coloneqq
	\begin{cases}
		\afn(\theta) + \bfn(\theta) 
		& \text{if $\afn(\theta) + \bfn(\theta) > 0$} \\
		\olfn(\theta) 			
		& \text{otherwise}
	\end{cases}, 
	\\				 
	\label{eq:troots}
	\troots &\coloneqq \{ \theta \in (-\pi,\pi] : \tfn(\theta) = 0\},
	\end{align}
\eseq
an alternative to approximating $\olfn$;
we have also defined $\mathcal{T}_\eps$, the set of roots of $\tfn$,
as this will be used later.
The key point here is that $\tfn$ tells us at which angles the sufficient condition for 
$\ps(A)$ and $\ps(B)$ to overlap is satisfied ($\tfn(\theta) < 0$), 
where only the necessary condition for overlap is satisfied ($\tfn(\theta) = 0$), or where neither is satisfied ($\tfn(\theta) > 0$).
However, in light of \cref{thm:necc,thm:suff}, it is clear that $\tfn$ could still contain (potentially large) intervals where it is zero,
and generally, regions where $\tfn(\theta) < 0$ holds will often be found \emph{in between} such regions where 
$\tfn$ is the constant zero.
Thus, there is still cause for concern that approximating $\tfn$ to find regions where it is negative may be difficult.
As such, in the next section we introduce an additional nonnegative function to replace 
the portions of $\tfn$ where it is the constant zero.

\begin{remark}
Recall that we added smoothing in the definitions of $\afn$ and $\bfn$ by squaring the~$\Arg(\cdot)$ terms,
as they otherwise may grow like the square root function when they increase from zero (or vice versa),
behavior which can be difficult and expensive to resolve via interpolation.
While $\olfn$ can also exhibit similar non-Lipschitz behavior when it transitions to being negative (and possibly elsewhere when it is already negative),
we have intentionally not smoothed this term. 
The reason is that once an angle $\theta$ is found such that $\olfn(\theta) < 0$, 
there is no need to continue building an interpolant approximation.
This angle can immediately be used to compute new level-set points to restart optimization and improve (lower)
the current estimate to $\seplamD$.
\end{remark}

\section{Locally supporting rays of pseudospectra and our certificate function $d_\eps$}
\label{sec:support}
In this section, we propose a new function with which we can replace 
the constant-zero portions of~$\tfn$.  
However, we begin with the following general definitions,
which are variations of the concept of 
a \emph{supporting hyperplane} in~$\R^n$ \cite[Chapter~2.5.2]{BoyV04}
specialized to~$\C$,
and a pair of related theoretical results.

\begin{definition}
Given a connected set $\setA \subset \C$, a line $\setL \subset \C$ \emph{supports $\setA$
at a point $z \in \bd(\setA) \cap \setL$} if $\setA$ lies completely in one of the closed half-planes defined by $\setL$.
\label[definition]{def:supp}
\end{definition}

\begin{definition}
Given a set $\setB \subset \C$, a line $\setL \subset \C$ \emph{locally supports $\setB$ at a
point $z \in \bd(\setB) \cap \setL$} if line $\setL$ supports $\setA \cap \mathcal{N}$ at $z$
for some neighborhood $\mathcal{N}$ about $z$,
where $\setA$ is a connected component of $\setB$.
A ray~$\ray$ locally supports $\setB$ at $z \in \bd(\setB) \cap \interior \ray$ if the line $\setL$ containing $\ray$ 
locally supports~$\setB$ at~$z$.
\label[definition]{def:local_supp}
\end{definition}

Note that if $\theta$ is a point where $\afn$ transitions from positive to zero (or vice versa),
this implies that the ray $\rayth$ locally supports $\ps(A)$.
Similarly, if $\theta$ is a point where $\bfn$ transitions from positive to zero (or vice versa),
then $\rayth$ locally supports $\ps(B)$.
Thus, it follows that if $\theta$ is a point where~$\afn+\bfn$ transitions from positive to zero (or vice versa), 
then $\rayth$ locally supports either~$\ps(A)$ or~$\ps(B)$ or both
simultaneously (though not necessarily at the same point).
Also note that if $\olfn$ transitions from zero to negative (or vice versa) at $\theta$, 
then $\rayth$ locally supports~$\ps(A) \cap \ps(B)$.
We now derive necessary conditions based on the eigenvalues of $C_\theta$ and $S_\theta$
for these scenarios.
We first consider the case when $\rayth$ locally supports $\ps(A)$.
Note that \cite[p.~371--373]{BurLO03} also informally touches upon this subject and related issues for 
the specific case of vertical lines. 

\begin{lemma}
Let $A \in \C^{m \times m}$, $\eps \geq 0$, $z_0 \in \C$, $\theta \in \R$,
and $\rayth$ be the ray defined in~\eqref{eq:ray}.
If~$\rayth$ locally supports $\ps(A)$,
then the matrix $C_\theta$ defined in~\eqref{eq:eigCD} 
has $\imagunit \hat r$ with $\hat r > 0$ as a repeated eigenvalue with even algebraic multiplicity.  
\label[lemma]{thm:ps_supp}
\end{lemma}
\begin{proof}
Without loss of generality, assume that $z_0 = 0$ and $\theta = 0$, and 
suppose that $\rayth$ locally supports $\ps(A)$ at $\hat r > 0$.
Thus, $\hat r \in \bd \ps(A)$, and so $\smin(A - \hat r I)=\eps$ 
and $\imagunit \hat r \in \Lambda(C_\theta)$ by \cref{lem:eig_ham}.
By~\cref{def:local_supp}, there exists a neighborhood $\mathcal{N}$ (in the open right half-plane) 
about~$\hat r$ such that $(\ps(A) \cap \mathcal{N}) \setminus\rayth$
is connected.
As $\rayth$ separates $\mathcal{N}$ into 
\mbox{$\mathcal{N}_1 = \{ z \in \mathcal{N} : \Im z > 0\}$} and $\mathcal{N}_2 = \{ z \in \mathcal{N} : \Im z < 0\}$,
either \mbox{$\ps(A) \cap \mathcal{N}_1$} or $\ps(A) \cap \mathcal{N}_2$
must be empty.
Without loss of generality, suppose that $\ps(A) \cap \mathcal{N}_1 = \varnothing$, and now consider how eigenvalue $\imagunit \hat r$
evolves as $\theta$ is varied, i.e., 
$\lambda(\theta) \in \Lambda(C_\theta)$ with $\lambda(0) = \imagunit \hat r$.
By continuity, eigenvalue $\lambda(\theta)$ can either move up or down on the imaginary axis
or it can move off the imaginary axis as the value of $\theta$ is increased from zero.
If it moves along the imaginary axis, then locally, we have that 
$\lambda(\theta) = \imagunit r(\theta)$, where~$r : \R \to \R$ is continuous and~$r(0) = \tilde r$.
Since~$\tilde r > 0$, there exists a $\theta_\mathrm{p} > 0$ such that $r(\theta) > 0$ for all $\theta \in (0,\theta_\mathrm{p})$.
By \cref{lem:eig_ham}, it thus follows that $r(\theta) \eit \in \ps(A)$ for all $\theta \in (0,\theta_\mathrm{p})$,
but this contradicts the assumption that $\ps(A) \cap \mathcal{N}_1$ is empty.
Thus, $\lambda(\theta)$ must move off the imaginary axis as the value of~$\theta$ is increased from zero.
Since the eigenvalues of the Hamiltonian matrix $C_\theta$ are 
symmetric with respect to the imaginary axis, by continuity
at least one pair of eigenvalues (or possibly more pairs non-generically) must coalesce on the imaginary axis at $\imagunit \hat r$  as $\theta \to 0$.
\hfill
\end{proof}

Now consider the case when $\rayth$ locally supports $\ps(A) \cap \ps(B)$,
which can happen at a boundary point of either $\ps(A)$ or $\ps(B)$, 
or a shared boundary point of both.  
Building on \cref{thm:ps_supp}, we have the following result.

\begin{lemma}
Let $A \in \C^{m \times m}$, $B \in \C^{n \times n}$, $\eps \geq 0$, $z_0 \in \C$, $\theta \in \R$, 
and $\rayth$ be the ray defined in~\eqref{eq:ray}.
Furthermore,  for matrix $A$, let $C_\theta$ be the matrix defined in~\eqref{eq:eigCD},
and let~$S_\theta$ be its analogue for matrix~$B$.
If~$\rayth$ locally supports $\ps(A) \cap \ps(B)$ at a point $z \in \C$,
then at least one, and possibly all, of the following conditions must hold:
\begin{enumerate}[leftmargin=*,label=\normalfont(\roman*)]
\item $C_\theta$ and/or $S_\theta$ has $\imagunit \hat r$ with $\hat r > 0$ as a repeated eigenvalue with even algebraic multiplicity,
\item $C_\theta$ and $S_\theta$ have an eigenvalue $\imagunit \hat r$ with $\hat r > 0$ in common.
\end{enumerate}
\label[lemma]{thm:overlap_supp}
\end{lemma}
\begin{proof}
Without loss of generality, we can assume that $z_0 = 0$ and $\theta = 0$, 
and so $z$ is on the positive part of the real axis, i.e., $z = \hat r$ for some $\hat r > 0$.
If $\rayth$ locally supports $\ps(A) \cap \ps(B)$ at~$\hat r$,
either~$\hat r \in \bd \ps(A)$ but not $\bd \ps(B)$ (or vice versa) or
$\hat r$ is a shared boundary point of both~$\ps(A)$ and~$\ps(B)$.
If $\hat r$ is not a shared boundary point, 
then  $\rayth$ must locally support either~$\ps(A)$ or~$\ps(B)$ at~$\hat r$,
and so \cref{thm:ps_supp} applies, yielding the ``or" part of (i).
Now suppose~$\hat r$ is a shared boundary point, and so $\smin(A - \hat r I) = \smin(B - \hat r I) = \eps$.
Then by \cref{lem:eig_ham}, $\imagunit \hat r$ is an eigenvalue of 
both $C_\theta$ and $S_\theta$, yielding~(ii).
Furthermore, 
$\rayth$ may or may not also locally support $\ps(A)$ and/or $\ps(B)$ at $\hat r$.
All four scenarios are possible, with the ``and" part of (i) corresponding to when 
the ray simultaneously locally supports both $\ps(A)$ and $\ps(B)$ at $\hat r$.
\hfill
\end{proof}

Recall the set of roots $\troots$ of $\tfn$, which is defined in~\eqref{eq:troots}.
If $\theta \in \troots$, then the necessary condition for overlap $\afn(\theta) + \bfn(\theta) = 0$ is satisfied,
and so $\rayth$ intersects both $\ps(A)$ and $\ps(B)$.
However, as~$\olfn(\theta) = 0$, the sufficient condition is not met, and via \cref{thm:suff}, it follows 
that~$\ps(A)$ and~$\ps(B)$ either have no points in common along $\rayth$,
or at most only boundary points in common.
For a function to replace the regions of $\tfn$ where $\tfn(\theta) =0$, i.e., $\troots$,
we propose a function $d_\eps^{AB} : \troots \to [0,\infty)$ that
is a measure of how close $\ps(A)$ and $\ps(B)$ are to sharing a boundary point
along $\rayth$.
To that end, let
\bseq	
	\label{eq:dAB}
	\begin{align}
	d_\eps^{AB}(\theta) &\coloneqq \min \{ d_\eps^A(\theta), \, d_\eps^B(\theta) \}, \qquad \text{where} \\
	d_\eps^A(\theta) &\coloneqq
	\min \{ f_A(r,\theta) - \eps : z_0 + r\eit \in\rayth \cap \bd \ps(B) \}, \\
	d_\eps^B(\theta) &\coloneqq
	\min \{ f_B(r,\theta) - \eps : z_0 + r\eit \in\rayth \cap \bd \ps(A)\},
	\end{align}
\eseq
where $f_A$ is defined in \eqref{eq:fA} for matrix $A$ and $f_B$ is its analogue for matrix $B$.
Since $\theta \in \troots$, both~$\rayth \cap \bd \ps(A)$ and $\rayth \cap \bd \ps(B)$ must 
be nonempty, and so the functions are well defined.
The purpose of $d_\eps^A$ is to provide a nonnegative measure of how close $\ps(B)$ is to 
touching~$\ps(A)$ along the given ray~$\rayth$, and vice versa for $d_\eps^B$.
Note that if \mbox{$\rayth \cap \bd \ps(A) \cap \bd \ps(B) \neq \varnothing$},
then~$d_\eps^A(\theta) = d_\eps^B(\theta) = 0$, but otherwise 
$d_\eps^A(\theta)$ and $d_\eps^B(\theta)$ are typically not the same value. 
While technically $d_\eps^A$ alone (or $d_\eps^B$) would suffice as a closeness measure
of the two pseudospectra along a given ray, 
we have observed that their pointwise minimum, i.e., $d_\eps^{AB}$, 
is often cheaper to approximate.  
Important properties of $d_\eps^{AB}$ are summarized in the following statement.

\begin{theorem}[Properties of $d_\eps^{AB}$]
Let $A \in \C^{m \times m}$, $B \in \C^{n \times n}$, $\eps \geq 0$, and $z_0 \in \C$ be such that 
$\eps \geq 0$ is not a singular value of either $A - z_0 I$ or $B - z_0 I$, and let $\rayth$ be the ray defined in \eqref{eq:ray}.
Furthermore, let $d_\eps^{AB}$ be as defined in \eqref{eq:dAB} on domain 
$\troots$ defined in \eqref{eq:troots}.
Then for any point $\theta \in \troots$, the following statements hold:
\begin{enumerate}[leftmargin=*,label=\normalfont(\roman*)]
\item $d_\eps^{AB}(\theta) \geq 0$,
\item $d_\eps^{AB}(\theta)=0 
		\quad \Longleftrightarrow \quad
		\rayth \cap \ps(A) \cap \ps(B) \neq \varnothing
		\quad \Longleftrightarrow \quad 
		\rayth \cap \bd \ps(A) \cap \bd \ps(B) \neq \varnothing$,
\item $d_\eps^{AB}$ is continuous at $\theta$ if every eigenvalue $\imagunit r$,
	of either $C_\theta$ or $S_\theta$, that attains the minimum in $d_\eps^{AB}(\theta)$ is simple,
\item $d_\eps^{AB}$ is differentiable at $\theta$ if there are no ties for $d_\eps^{AB}(\theta)$, i.e.,
	it is attained via $f_A(r,\theta)$ or $f_B(r,\theta)$ but not both,
	the corresponding minimum singular value is simple, and 
	there is a single eigenvalue~$\imagunit r$, of either $C_\theta$ or $S_\theta$ as appropriate, 
	that attains $d_\eps^{AB}(\theta)$, where this eigenvalue is simple.
\end{enumerate}
\label{thm:dAB}
\end{theorem}
\begin{proof}
Statements (i) and (ii) are simple but important direct consequences of the definition of~$d_\eps^{AB}$ 
and the fact that its domain is restricted to $\troots$,
since otherwise $d_\eps^{AB}(\theta)$ could be negative (or undefined) for some~$\theta$
and the equivalences in (ii) would not hold.
For statement (iii), consider~$d_\eps^B(\theta)$ and recall that by \cref{lem:eig_ham}, 
\mbox{$z_0 + \hat r\eit \in\rayth \cap \bd \ps(A)$} 
is always associated with an eigenvalue~$\imagunit \hat r$ of $C_\theta$.
Since eigenvalues are continuous, 
eigenvalue $\imagunit \hat r$ can either move continuously along the positive portion of the imaginary
axis or leave this region as $\theta$ is varied.  
Clearly, the former case cannot cause a discontinuity in $d_\eps^B$, so consider the latter.
By the assumption on $\eps$, zero can never be an eigenvalue of $C_\theta$ for any $\theta$, 
and clearly the eigenvalues of a matrix are all finite.
Thus, if an eigenvalue leaves the positive portion of the imaginary axis, it cannot be by going through 
the origin or infinity.
Since the eigenvalues of the Hamilton matrix~$C_\theta$ 
are symmetric with respect to the imaginary axis, 
a simple eigenvalue cannot leave the imaginary axis, 
and a repeated eigenvalue is excluded by assumption; hence, $d_\eps^B$ must be continuous at $\theta$.
The same argument shows that  $d_\eps^A$ is continuous at $\theta$ under the 
analogous assumptions for the eigenvalues of $S_\theta$, and 
so $d_\eps^{AB}$ is continuous at $\theta$.
For (iv), the assumptions mean that there are no ties for the $\min$ functions and 
standard perturbation theory for simple singular values and simple eigenvalues applies. 
\hfill
\end{proof}

While \Cref{thm:dAB} verifies that $d_\eps^{AB}$ is reasonably well behaved,
$d_\eps^{AB}$ may have jump discontinuities. 
However, $d_\eps^{AB}$ is discontinuous at point $\theta \in \interior \troots$
only if two conditions simultaneously hold: 
$\rayth$ locally supports $\ps(A)$ or $\ps(B)$ 
at a point $z_0 + \hat r\eit$ with $\hat r > 0$, and this value $\hat r$ is the one that attains the value of $d_\eps^{AB}(\theta)$.
As a result, we expect such discontinuities to be relatively few, and so this should not be a problem in practice.
Functions $d_\eps^A$ and $d_\eps^B$ typically do not have non-Lipschitz behavior when they transitions to/from 
zero, and so we have not added smoothing when them and defining~$d_\eps^{AB}$.  
When~$\fD$ has a unique minimizer, $d_\eps^{AB}$ only has a single root for~$\eps = \seplamD$.

\def\imgscale{0.422}

\begin{figure}[t]
\centering
\subfloat[$\ps(A)$ and $\ps(B)$]{
\includegraphics[scale=\imgscale,trim={0.0cm 0.0cm 0.0cm 0.0cm},clip]{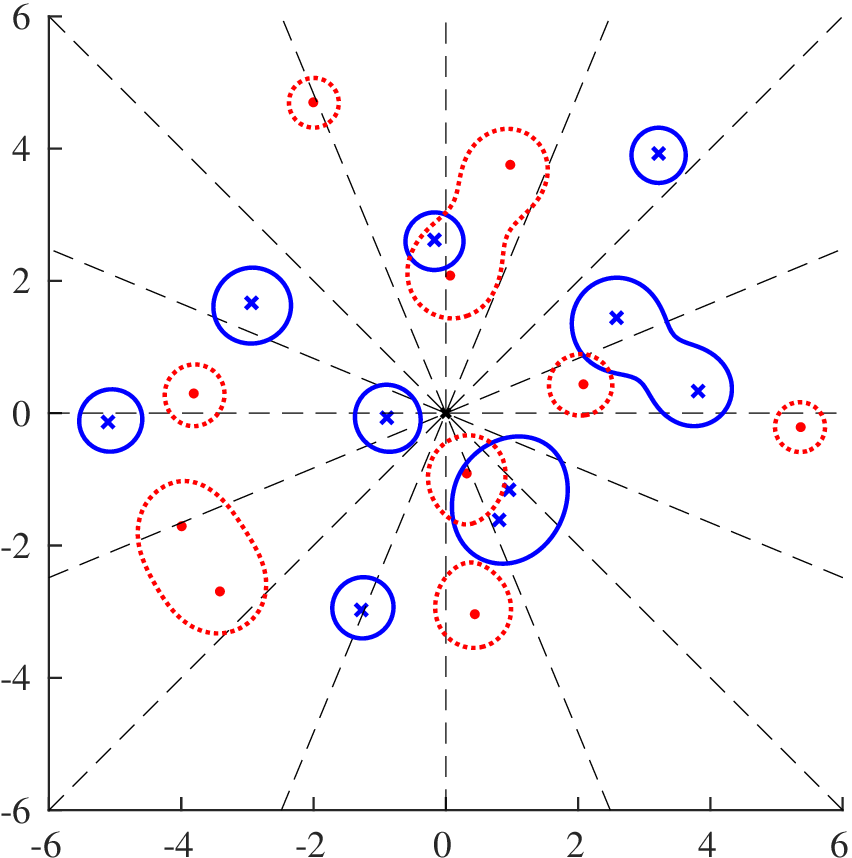} 
} 
\subfloat[$d_\eps(\theta)$]{
\includegraphics[scale=\imgscale,trim={0.0cm 0.0cm 0.0cm 0.0cm},clip]{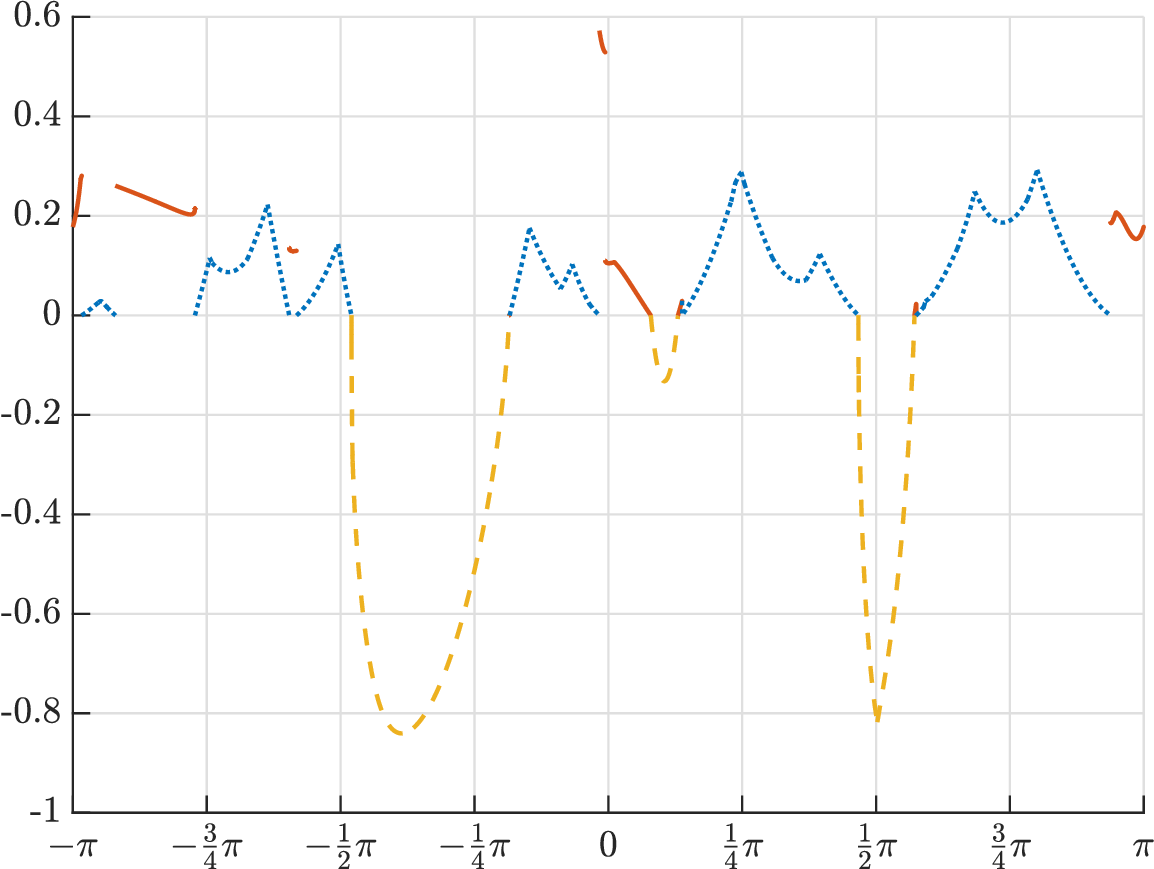} 
}
\caption{
For two randomly generated matrices $A,B \in \C^{10 \times 10}$,
the left pane shows their eigenvalues (respectively x's and dots), and
$\ps(A)$ and $\ps(B)$ (respectively solid and dotted contours)
 for \mbox{$\eps = 0.3 > \seplamD$}.
The search point $z_0$ is the origin; rays emanating from it are depicted by dashed lines.  
The right pane shows a corresponding plot of $d_\eps$, where its components 
are plotted as follows: $\afn + \bfn$ (dotted), 
$\olfn$ (dashed), and $d_\eps^{AB}$ (solid).
For $\theta = -\tfrac{1}{2}\pi$, it can be seen in the left pane that 
$\rayth$ only passes through $\ps(B)$ and 
so $d_\eps(\theta) = \afn(\theta) + \bfn(\theta) > 0$ in the right pane.
Meanwhile for $\theta = \tfrac{1}{2}\pi$, $\rayth$ passes through 
$\interior \ps(A) \cap \interior \ps(B)$ and so $d_\eps(\theta) = \olfn(\theta) < 0$. 
Finally, for $\theta = 0$, while $\rayth$ passes through both $\ps(A)$ and $\ps(B)$,
it never does so simultaneously, 
hence~$\afn(\theta) + \bfn(\theta) = \olfn(\theta) = 0$ and $d_\eps(\theta) = d_\eps^{AB}(\theta) > 0$.
}
\label{fig:cert_fn_ex}
\end{figure}

Combining our three constituent pieces, we now define 
\mbox{$d_\eps : (-\pi,\pi] \to \R$},
our key function for our interpolation-based globality certificate for $\seplamD$:
\beq	
	\label{eq:cert_demmel}
	d_\eps(\theta) \coloneqq
	\begin{cases}
		\afn(\theta) + \bfn(\theta) 
		& \text{if $\afn(\theta) + \bfn(\theta) > 0$}, \\
		\olfn(\theta) 			
		& \text{if $\olfn(\theta) < 0$}, \\
	 	d_\eps^{AB}(\theta) 
		& \text{otherwise.}
	\end{cases}
\eeq
In \cref{fig:cert_fn_ex}, we plot $d_\eps$ for a sample problem 
with \mbox{$\eps > \seplamD$}
in order to illustrate the different components of $d_\eps$.
Recalling that $\afn + \bfn$ is a nonnegative function and so is $d_\eps^{AB}$ on its domain, we immediately have
the following global convergence conditions as a corollary of \cref{thm:necc,thm:suff,thm:dAB}.

\begin{corollary}[Global convergence for $\seplamD$ via $d_\eps$]
Let $A \in \C^{m \times m}$, $B \in \C^{n \times n}$, $\eps \geq 0$, and~$z_0 \in \C$ be such that 
$\eps$ is not a singular value of either $A - z_0 I$ or $B - z_0 I$,
and let $d_\eps$ be the function defined in \eqref{eq:cert_demmel}.
Then 
\[ 
	\min_{\theta \in (-\pi,\pi]} d_\eps(\theta) < 0
	\quad \Longleftrightarrow \quad
	\mu ( \{\theta \in (-\pi,\pi] : d_\eps(\theta) < 0 \}) > 0
	\quad \Longleftrightarrow \quad
	\eps > \seplamD.
\]
\label[corollary]{cor:demmel_global}
\end{corollary}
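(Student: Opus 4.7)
The plan is to reduce the corollary to the seventh statement of \cref{thm:suff} by verifying that the negative sublevel set of $d_\eps$ coincides exactly with the set $\mathcal{L}(\eps)$ from \eqref{eq:suff_set}.

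First, I would examine each branch of the piecewise definition \eqref{eq:cert_demmel} and determine the sign of $d_\eps(\theta)$. The first branch assigns $d_\eps(\theta) = \psafn(\theta) + \psbfn(\theta) > 0$ by its own defining condition. The second branch assigns $d_\eps(\theta) = l_\eps(\theta) < 0$, again by its defining condition. For the third branch, the two previous conditions being excluded forces both $\psafn(\theta) + \psbfn(\theta) = 0$ and $l_\eps(\theta) \geq 0$; since $l_\eps$ is nonpositive by its definition \eqref{eq:suff_fn}, this in fact yields $l_\eps(\theta) = 0$. Hence $t_\eps(\theta) = 0$ and $\theta \in \mathcal{T}(\eps)$, which places us in the domain of $d_\eps^{AB}$, so that \cref{thm:dAB} applies and gives $d_\eps(\theta) = d_\eps^{AB}(\theta) \geq 0$.

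Combining these three cases yields the key equivalence $d_\eps(\theta) < 0 \Leftrightarrow l_\eps(\theta) < 0$, so that $\{ \theta \in (-\pi,\pi] : d_\eps(\theta) < 0 \} = \mathcal{L}(\eps)$. The first stated equivalence then follows by recalling that $l_\eps$ is continuous on the (compact) circle by the third statement of \cref{thm:suff}, so $\mathcal{L}(\eps)$ is open: if it is nonempty, it automatically has positive Lebesgue measure and $\min_\theta d_\eps(\theta) \leq \min_\theta l_\eps(\theta) < 0$, while if it is empty, $d_\eps$ is nonnegative everywhere. The second stated equivalence, $\mu(\mathcal{L}(\eps)) > 0 \Leftrightarrow \eps > \seplamD$, is precisely the seventh statement of \cref{thm:suff}.

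The only real bookkeeping step is the treatment of the third branch of \eqref{eq:cert_demmel}: one must confirm that whenever neither of the first two conditions fires, $\theta$ lies in the domain $\mathcal{T}(\eps)$ on which $d_\eps^{AB}$ is defined, so that the nonnegativity afforded by \cref{thm:dAB} is legitimately invoked. Once that bookkeeping is in hand, the corollary is essentially a repackaging of \cref{thm:suff} through the definition of $d_\eps$.
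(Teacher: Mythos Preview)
Your proposal is correct and follows exactly the line the paper takes: the paper dispatches this corollary in a single sentence before its statement, noting that since $\psafn(\theta)+\psbfn(\theta)$ and $d_\eps^{AB}(\theta)$ are nonnegative, the result is immediate from \cref{thm:suff}. Your write-up simply spells out that observation in detail---showing $\{d_\eps<0\}=\mathcal{L}(\eps)$ via a case analysis on the branches of \eqref{eq:cert_demmel} and then invoking statement~vii of \cref{thm:suff}---so there is no substantive difference.
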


In the process of devising $d_\eps$, we considered many different possibilities
but found that these  alternatives
were  significantly more expensive to use than $d_\eps$,
even if they had fewer jumps or even none.
For example, we considered an entirely continuous alternative to~$d_\eps$ that replaced its $d_\eps^{AB}$ portions
with a continuous measure of the distance to any of the necessary conditions in \cref{thm:overlap_supp} holding.
However, this function often had more complicated behavior and many many roots than~$d_\eps$
because the necessary conditions in~\cref{thm:overlap_supp} hold for any $\theta$ such that~$\rayth$ locally supports
either of the two pseudospectra or their intersection, and possibly at other angles as well.  
Even when incorporating smoothing to address non-Lipschitz behavior at roots,
this alternative was still much more expensive to approximate than $d_\eps$.
We also tried replacing~$d_\eps^{AB}$ 
with~\mbox{$\min \{ \mu (\rayth \cap \ps(A)), \mu(\rayth \cap \ps(B)) \}$}
and other continuous alternatives, although these 
choices still resulted in jumps when combined when used in conjunction 
with~$\afn + \bfn$ and~$\olfn$. 
But these choices were more expensive to approximate than~$d_\eps^{AB}$
because they generally had more complicated behaviors than~$d_\eps^{AB}$, 
e.g., more nonsmooth points, more oscillatory behavior, etc.
Finally, we considered just using the smallest pairwise distance between 
points in $\rayth \cap \bd \ps(A)$ and~$\rayth \cap \bd \ps(B)$.
This is quite similar to $d_\eps^{AB}$ and can have similar discontinuities,
but it too ended up being more expensive to approximate than~$d_\eps^{AB}$.
That all said, none of the alternatives we considered were prohibitively expensive; 
using any of them to compute $\seplamD$ was still much faster
than the method of Gu and Overton, even though they were generally not as fast as our
ultimate choice for $d_\eps$.

\begin{remark}
Another approach to computing $\seplamD$ is 
via
\begin{equation}
	\label{eq:s_theta}
	s(\theta) \coloneqq \min_{r \in \R} s_\theta(r), 
	\quad \text{where} \quad 
	s_\theta(r) \coloneqq \max \{ \smin(F_A(r,\theta)), \, \smin(F_B(r,\theta)) \},
\end{equation}
i.e., $s(\theta)$ is the minimal value $\fD$ takes along the line defined by~$\theta$ and 
passing through some~$z_0 \in \C$.
It is then immediate that
\begin{equation}
	\label{eq:sld_simple}
	\seplamD = \min_{\theta \in [0,\pi)} s(\theta),
\end{equation}
as this simply rewrites~\eqref{eq:min_zd}
in polar coordinates about~$z_0$.
Thus, using Chebfun to approximate~$s$ and then find a global minimizer in $[0,\pi)$
provides another way to obtain~$\seplamD$.  
One drawback of this approach is that for any given~$\theta$, 
evaluating~$s(\theta)$ is much more expensive than evaluating~$d_\eps(\theta)$.
As we explain in detail in the next section, evaluating~$d_\eps(\theta)$ is essentially direct,
since it only requires solving two eigenvalue problems of order $2m$ and $2n$ and this 
is generally the dominant cost.  
Meanwhile, computing~$s(\theta)$ involves finding a global minimizer of~$s_\theta$,
which requires iteration.
Although we can use~\cref{lem:eig_ham} to construct such an iteration, similar to the level-set methods of~\cite{BoyB90,BruS90}
for computing the~$\Hinf$~norm, the resulting algorithm to compute~$s(\theta)$
would generally only be linearly convergent; the key difference
between here and the $\Hinf$-norm setting is that $s_\theta$,
due to being a $\max$ of two $\min$ functions, will generally will be nonsmooth at its minimizers.
Consequently, evaluating $s(\theta)$ would require solving multiple eigenvalue problems
of $2m$ and $2n$.
Another issue is that although $s$ is continuous, it is still nonsmooth, and 
it is generally more expensive for Chebfun to detect nonsmooth points than jumps; 
see~\cite{PacPT09,perTre20}.
Finally, a third downside is that using Chebfun to \emph{precisely} compute a (likely unique) global minimizer
of some function, e.g., $s$, is a significantly more numerically challenging task than what 
we ask of Chebfun inside our algorithm 
using $d_\eps$, i.e., 
to find \emph{any point} where $d_\eps$ is negative, since as we have shown, the set of such points has positive measure
when $\eps > \seplamD$.
Thus, when attempting to compute $\seplamD$ by applying Chebfun to \eqref{eq:sld_simple},
we nevertheless recommend subsequently refining its computed result by applying
local optimization to $\fD$ initialized from the point in the complex plane found by Chebfun.
\label[remark]{rem:sld_simple}
\end{remark}

\section{Implementation and the cost of our method}
\label{sec:implement}
We now discuss how to implement our $\seplamD$ algorithm, which we have done in \matlab,
and describe its overall work complexity.
We give detailed remarks in the following subsections, while
high-level pseudocode is given in \cref{alg:seplamd}.

\begin{algfloat}[t]
\begin{algorithm}[H]
\floatname{algorithm}{Algorithm}
\caption{\small{Interpolation-based Globality Certificate Algorithm for $\seplamD$}}
\label{alg:seplamd}
\begin{algorithmic}[1]
	\REQUIRE{  
		$A \in \C^{m \times m}$, $B \in \C^{n \times n}$, ``search point" $z_0 \in \C$, and $z_\mathrm{init} \in \C$.
		}
	\ENSURE{ 
		$\eps \approx \seplamD$.
		\\ \quad
	}
	
	\WHILE { true } 
		\STATE $\eps \gets $ computed locally/globally minimal value of $\fD$ initialized from $z_\mathrm{init}$
		\STATE \COMMENT{Begin approximating $d_\eps$ to assert convergence or find new starting points}
		\STATE $p_\eps \gets 1$ \COMMENT{Initial guess for polynomial interpolant $p_\eps$ for approximating
			$d_\eps$}
		\WHILE { $p_\eps$ does not sufficiently approximate $d_\eps$ } 
			\STATE $[\theta_1,\ldots,\theta_q] \gets $ new  sample points from $(-\pi,\pi]$ 
			\STATE \COMMENT {If new starting points are detected, restart optimization to lower $\eps$:} 
			\IF { $d_\eps(\theta_j) < 0$ for some $j \in \{1,\ldots,q\}$ } 
				\STATE $z_\mathrm{init} \gets$ a point in $\bd \{ \rayth[\theta_j] \cap \ps(A) \cap \ps(B) \} \setminus \{z_0\}$
				\STATE \textbf{goto line 2} \COMMENT{Restart optimization from $z_\mathrm{init}$}
			\ENDIF
			\STATE \COMMENT {Otherwise, no starting points detected, keep improving $p_\eps$:}
			\STATE 	$p_\eps \gets$ improved polynomial interpolant of 
					$d_\eps$ via $\theta_1,\ldots,\theta_q$
		\ENDWHILE
		\STATE \COMMENT{$p_\eps$ approximates $d_\eps$ well and no new starting points were encountered} 
		\STATE \COMMENT {However, do  a final check before asserting that $d_\eps$ is nonnegative:} 
		\STATE $[\theta_1,\ldots,\theta_q] = \argmin p_\eps(\theta)$
		\IF { $d_\eps(\theta_j) < 0$ for some $j \in \{1,\ldots,q\}$ }
				\STATE $z_\mathrm{init} \gets$ a point in $\bd \{ \rayth[\theta_j] \cap \ps(A) \cap \ps(B) \} \setminus \{z_0\}$
				\STATE \textbf{goto line 2} \COMMENT{Restart optimization from $z_\mathrm{init}$}
		\ELSE
			\RETURN	$\, $ \COMMENT{$p_\eps \approx d_\eps$ and 
					$\quad  \Longrightarrow \quad \eps \approx \seplamD$}
		\ENDIF 
	\ENDWHILE
\end{algorithmic}
\end{algorithm}
\vspace{-0.3cm}
\algnote{
To keep the pseudocode a reasonable length, we make some simplifying assumptions:
optimization converges to local/global minimizers exactly, $z_\mathrm{init}$ computed
in lines 9 and 19, for restarting optimization, is never a stationary point of $\fD$, 
and the~``search point" $z_0$ is such that 
all encountered values of $\eps$ are not singular values of \mbox{$\smin(A-z_0 I)$} and $\smin(B-z_0 I)$,
per the assumptions given in \cref{sec:components} and \cref{sec:overlap}.
Lines 3-15 describe the core of the interpolation-based globality certificate,
where we only give a broad outline of the interpolation process for approximating $d_\eps$;
note that for numerical reasons, 
each certificate should actually be done with~$\tilde \eps = (1 - \tau) \eps$,
where~$\tau \in (0,1)$ is some relative tolerance.
See \cref{sec:d_eps_eval} and \cref{sec:interp_cert} for more implementation details.
}
\end{algfloat}

\subsection{Choosing a search point}
\label{sec:choosing_z0}
Regarding what search point $z_0$ to use, we recommend
the average of all the distinct eigenvalues of $A$ and $B$.
This helps to ensure the whole domain of $d_\eps(\theta)$ is relevant.
Otherwise, if for a given value of $\eps$, $z_0$ is chosen far from the pseudospectra of $A$ and $B$, 
then \mbox{$\afn(\theta) + \bfn(\theta) = 0$} would only hold on a very small subset of $(-\pi,\pi]$,
which in turn would likely make it harder to find the regions where $d_\eps(\theta)$ is negative.
On every round, our code checks that the choice of $z_0$ still satisfies our needed assumptions and perturbs it slightly
if it does not (in practice, we have not observed that this is necessary).
Finally, if the pseudospectra of $A$ and $B$ both have real-axis symmetry,
by choosing $z_0$ on the real axis, it is then only necessary to 
approximate~$d_\eps(\theta)$ on~$[0,\pi]$.

\subsection{Evaluating $d_{\eps}(\theta)$ and its cost}
\label{sec:d_eps_eval}
Given some $\theta$, evaluating $d_\eps(\theta)$ proceeds as follows.
First, the eigenvalues of both $C_\theta$ and $S_\theta$ are computed.
For increased reliability, it is recommended that this be done via a structure-preserving 
eigensolver such as \cite{BenMX98b}.
From these spectra, it is then trivial to calculate the value of $\afn(\theta) + \bfn(\theta)$
via \eqref{eq:psa_fn}.
If $\afn(\theta) + \bfn(\theta) > 0$, then the value of $d_\eps(\theta)$ has been computed.
Otherwise, evaluating $d_\eps(\theta)$ requires the following additional computations,
which begins with obtaining the value of~$\olfn(\theta)$.  
To that end, we compute $\rayth \cap \ps(A)$ and $\rayth \cap \ps(B)$.
Considering the former, we want to determine the values $r > 0$ such that $f_A(r,\theta) = \eps$,
and via \cref{lem:eig_ham}, we have the following sorted list of candidate values $0 = r_0 < r_1 < \ldots < r_q$
that may satisfy this equality, 
where~$\imagunit r_j$ 
for~$j=1,\ldots,q$ are eigenvalues of $C_\theta$ and we have added $r_0=0$.
Then to compute $\rayth \cap \ps(A)$,
we must assert which intervals on $\rayth$, defined by $[r_{j-1},r_j]$ for $j =1,\ldots, q$, are also in $\ps(A)$.  
There are several ways to do this but a simple and robust way
is to just evaluate~$f_A(\hat r_j,\theta)$ for $\hat r_j = 0.5(r_{j-1} + r_j)$ over $j = 1,\ldots,q$;
since $f_A(\hat r_j,\theta) \neq \eps$, the corresponding interval is not in $\rayth \cap \ps(A)$ if and only if $f_A(\hat r_j,\theta) > \eps$.
Note that it does not matter if we have two or more adjacent intervals in our computed version
of~$\rayth \cap \ps(A)$.  An analogous computation yields $\rayth \cap \ps(B)$.
With these two sets computed, calculating the amount of their overlap along the given ray, i.e., $-\olfn(\theta)$,
is straightforward.  
If $\olfn(\theta) < 0$, then the evaluation of $d_\eps(\theta)$ is done and the 
boundary points of $\rayth \cap \ps(A) \cap \ps(B)$ 
have been also been computed, which are used to restart optimization.
However, if $\olfn(\theta)=0$, then finally we must compute $d_\eps^{AB}(\theta)$ in order
to complete the computation of $d(\theta)$,
though this is this is straightforward
to do from the definition of $d_\eps^{AB}(\theta)$ given in~\eqref{eq:dAB}
and the previous computations.

Recalling our assumption that $m \leq n$, evaluating $d_\eps(\theta)$ is $\bigO(n^3)$ work
if done in the following manner.
Computing all of the eigenvalues of $C_\theta$ and $S_\theta$ is $\bigO(n^3)$ work,
and that is all there is to do when~$\afn(\theta) + \bfn(\theta) > 0$.
But when $\afn(\theta) + \bfn(\theta) = 0$, computing $d_\eps(\theta)$ additionally requires 
computing the values of $f_A(r,\theta)$ and $f_B(r,\theta)$ for different values of $r$.
While the number of values of $r$ is often only a handful, in the worst case, 
it can be $\bigO(m+n)$.
Hence, if we were to evaluate this pair of functions by computing SVDs, we would 
exceed the stated $\bigO(n^3)$ work complexity bound by a factor of $n$.  Fortunately,
there is a more efficient option due to Lui for fast plotting of pseudospectra~\cite{Lui97}.
Since $A$ is square, it has a Schur decomposition $A = UTU^*$, where~$U$ is unitary 
and $T$ is triangular, and moreover, since unitary transformations do not alter the pseudospectrum, 
$\ps(A) = \ps(T)$ holds.
The key benefit of this transformation is that at any point $z_0 + r\eit \in \C$, we have that 
$T -(z_0 + r\eit) I$ remains in triangular form,
and so inverse iteration can be done to compute this shifted matrix's minimum singular value 
using backsolves that only require quadratic work as opposed to the usual cubic work
for solving a linear system.
We need only compute and store Schur decompositions of $A$ and $B$ once in an offline phase, 
which is cubic work, and then we can evaluate $f_A(r,\theta)$ and $f_B(r,\theta)$ for any $r$ and $\theta$
in a most $\bigO(n^2)$ work under the mild assumption that inverse iteration converges 
in relatively few steps.\footnote{For more details on the actual inverse-iteration-based algorithm, 
including pseudocode and code examples, see~\cite{Lui97} and~\cite[Chapter~39]{TreE05},
but note that the latter has the following typo: In ``Core EigTool algorithm" \cite[p.~375]{TreE05},
the second to last line should be \texttt{sigmin(j,k) = 1/sqrt(sig);}, not \texttt{sigmin(j,k) = sqrt(sig);}.}
Hence, evaluating $d_\eps(\theta)$ can always be done within $\bigO(n^3)$ work.
In our own experience, we have seen that ten iterations is generally more than sufficient to 
compute $f_A(r,\theta)$ and $f_B(r,\theta)$ accurately to the full precision of the hardware, and 
that this technique is already faster than computing the full SVD for matrices as small as $50 \times 50$.

\subsection{Approximating $d_\eps$ and restarting}
\label{sec:interp_cert}
To approximate $d_\eps$, we use Chebfun, as it is rather adept at approximating 
functions with nonsmooth points and/or discontinuities.  
As Chebfun normally provides groups of points to evaluate simultaneously (line~6 of \cref{alg:seplamd}),
these evaluations of $d_\eps$ can be done in parallel; see \cite[Section~5.2]{Mit21} for more details.
Furthermore, if $d_\eps(\theta) < 0$ for any of current group of points provided by Chebfun,
we immediately halt Chebfun and use the detected boundary points 
of~\mbox{$\rayth \cap \ps(A) \cap \ps(B)$} (except for $z_0$) to restart optimization (lines~7--11 of \cref{alg:seplamd}).
This is accomplished by throwing an error when a point is encountered such that $d_\eps(\theta) < 0$ holds,
which causes Chebfun to be aborted.
By subsequently catching our own thrown error, we can resume our program to restart another round of optimization.

\subsection{Finding minimizers}
\label{sec:find_local_mins}
Like many other optimization-with-restarts algorithms, it will be necessary to use a \emph{monotonic} optimization solver, 
i.e., one that always decreases the objective function on every iteration, which 
is the case for most unconstrained optimization solvers.
Minimizers of $\fD$ will almost always be nonsmooth,
and at best, we can expect linear convergence from a nonsmooth optimization solver.
However, since there are only two real variables, we expect 
the number of iterations needed to converge to be relatively small.
Thus, as evaluating $\fD$ and its gradient is significantly cheaper 
than evaluating~$d_\eps$, and we expect far fewer function evaluations for the former than the latter,
the cost of~\cref{alg:seplamd} will generally not be dominated by the optimization phases.

To find minimizers of~$\fD$ using only gradient information, 
we use GRANSO: GRadient-based Algorithm for Non-Smooth Optimization \cite{granso}.
GRANSO implements the BFGS-SQP nonsmooth optimization algorithm of~\cite{CurMO17}, 
which can handle nonsmooth constraints, but for problems without constraints, 
it reduces to BFGS with the line search of~\cite{LewO13}, a combination which 
Lewis and Overton have studied and advocated as a method for nonsmooth 
optimization.
While there are no convergence results for BFGS for general nonsmooth optimization,
it nevertheless seems to reliably and accurately converge to nonsmooth stationary values.
Indeed, in their concluding remarks~\cite[p.~160]{LewO13}, Lewis and Overton wrote 
``In our experience with functions with bounded sublevel sets, 
BFGS essentially always generates function values converging linearly to a Clarke stationary value, 
with exceptions only in cases that we attribute to the limits of machine precision. 
We speculate that, for some broad class of reasonably well-behaved functions, this behavior is almost sure."
Since $\fD$ is locally Lipschitz as long as $\seplamD > 0$ and has bounded level sets,
we expect that BFGS will also be an efficient and reliable tool in our setting.
For improved theoretical guarantees, one could follow up optimization via BFGS with a phase of 
the gradient sampling algorithm~\cite{BurLO05}, which would ensure convergence to 
nonsmooth stationary values of $\fD$ when~$\seplamD > 0$.
However, for simplicity, we only use BFGS here.

When restarting optimization, 
our certificate may provide many new starting points.
Restarting from just one would give the smallest chance of converging to a global minimizer on this round,
while restarting from them all could be a waste of time, particularly if this ends up just 
returning the same minimizer over and over again.
In practice, one could prioritize them in terms of most promising first and limit the total number used.
On multi-core machines, optimization can be run from multiple starting points in parallel.

\subsection{Terminating the algorithm}
In addition to the convergence tests described in \cref{alg:seplamd},
it is also necessary to terminate the algorithm if consecutive estimates for $\seplamD$ are identical.    
The reason is that we cannot expect optimization solvers to find minimizers \emph{exactly}.
If a global minimizer~$\tilde z$ is obtained only up to some rounding error,
then $\seplamD$ has essentially been computed, but our certificate may still detect that
the algorithm has not \emph{truly} converged to a global minimizer, and in this case, 
the algorithm may try to restart optimization (unsuccessfully).  
This is also part of the reason why the certificates should actually be performed 
with $\tilde \eps = (1 - \tau)\eps$, as described in the note under~\cref{alg:seplamd}.

\subsection{The overall work complexity and using lines instead of rays}
\label{sec:work}
In the worst case, the overall work complexity to perform the interpolation-based globality certificates
is $\bigO(k n^3)$, where $k$ is the total number of function evaluations (over all values of $\eps$ encountered).
As restarts tend to happen quickly, $k$ is roughly equal to the number of evaluations needed to approximate $d_\eps$
when $\eps = \seplamD$, and as we will see in the numerical 
experiments,~$k$ can generally be considered to be like a large constant, although it is influenced by the geometry of the two pseudospectra.

When implementing the algorithm, the definition of $d_\eps$ can be modified so that it considers
lines through~$z_0$ instead of rays emanating from~$z_0$.  
This can be beneficial, since we always get information for the direction~$\theta + \pi$ when considering 
$\rayth$, and so this modified version of $d_\eps$ need only be interpolated on~$[0,\pi]$.  
Function $\afn$ measures the minimum argument of $-\imagunit \lambda$ over each eigenvalue $\lambda$ of $C_\theta$, so when using lines instead of rays, it must also consider the minimum angle with respect to the negative real axis.
These additional angles are computed by simply switching the sign of the imaginary part of each eigenvalue $\lambda$.
The same change is made for $\bfn$, while modifying $\olfn$ and $d_\eps^{AB}$ is straightforward.
While using lines often results in less overall work, this is not always the case,
as it can sometimes make $d_\eps$ more complicated and thus more expensive to approximate.

\section{Algorithms for $\seplamV$}
\label{sec:varah}
We now briefly turn to the problem of computing Varah's $\seplam$.
We first answer whether or not~\cref{alg:seplamd} extends to $\seplamV$
and then propose a different algorithm to compute~$\seplamV$.

\subsection{Does \cref{alg:seplamd} extend to $\seplamV$?}
\label{sec:varah_alg1}
In the construction of function $d_\eps$ for computing $\seplamD$, nowhere have we needed 
that the same value of $\eps$ be used for the pseudospectra of $A$ and $B$.
Thus for Varah's version of $\seplam$, we can analogously define 
\beq	
	\label{eq:cert_varah}
	d_{\eps_1,\eps_2}(\theta) \coloneqq
	\begin{cases}
		\afn[\eps_1](\theta) + \bfn[\eps_2](\theta) 
		& \text{if $\afn[\eps_1](\theta) + \bfn[\eps_2](\theta)  > 0$}, \\
		\olfn[\eps_1,\eps_2](\theta) 			
		& \text{if $\olfn[\eps_1,\eps_2](\theta) < 0$}, \\
		d_{\eps_1,\eps_2}^{AB}(\theta) 
		& \text{otherwise,}
	\end{cases}
\eeq
where 
\begin{align*}
	\olfn[\eps_1,\eps_2](\theta) 
	&\coloneqq -\mu \left(\rayth \cap \ps[\eps_1](A) \cap \ps[\eps_2](B) \right), \\
	d_{\eps_1,\eps_2}^{AB}(\theta) 
	& \coloneqq \min \{d_{\eps_1,\eps_2}^A(\theta), d_{\eps_1,\eps_2}^B(\theta) \}, \\
	d_{\eps_1,\eps_2}^A(\theta) 
	& \coloneqq \min \{ f_A(r,\theta) - \eps_1 :\rayth \cap \bd \ps[\eps_2](B) \}, \\
	d_{\eps_1,\eps_2}^B(\theta) 
	& \coloneqq \min \{ f_B(r,\theta) - \eps_2 :\rayth \cap \bd \ps[\eps_1](A) \},
\end{align*}
and $f_A$ is defined in \eqref{eq:fA} for matrix $A$, while $f_B$ is its analogue for matrix $B$.
Although this will not allow us to compute $\seplamV$ to arbitrary accuracy,
we do have the following necessary condition as another corollary of \cref{thm:suff}.

\begin{corollary}[A necessary condition for $\eps_1 + \eps_2 = \seplamV$ via $d_{\eps_1,\eps_2}$]
Let $A \in \C^{m \times m}$, \mbox{$B \in \C^{n \times n}$}, $\eps_1,\eps_2 \geq 0$, and $z_0 \in \C$ be such that 
$\eps_1$ and $\eps_2$ are, respectively, not singular values of $A - z_0 I$ and $B - z_0 I$,
and let $d_{\eps_1,\eps_2}$ be the function defined in \eqref{eq:cert_varah}.
Then 
\[ 
	\min_{\theta \in (-\pi,\pi]} d_{\eps_1,\eps_2}(\theta) < 0
	\quad \Longleftrightarrow \quad
	\mu ( \{\theta \in (-\pi,\pi] : d_{\eps_1,\eps_2}(\theta) < 0 \}) > 0,	
\]
and 
\[
	\eps_1 + \eps_2 > \seplamV 
	\qquad \text{if} \qquad
	\min_{\theta \in (-\pi,\pi]} d_{\eps_1,\eps_2}(\theta) < 0.
\]
\label[corollary]{cor:varah_necc}
\end{corollary}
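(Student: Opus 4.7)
The plan is to derive \cref{cor:varah_necc} from the two-pseudospectra analogues of the results in \cref{sec:overlap}, particularly \cref{thm:suff}, combined with the pseudospectral definition of $\seplamV$ in \eqref{eq:min_ev}. I will treat the equivalence and the implication separately, and I will rely on the translation (implicit in the opening of \cref{sec:varah}) that every statement about $\Lambda_\eps(A)$ and $\Lambda_\eps(B)$ carries over verbatim when the two pseudospectra use different levels $\eps_1$ and $\eps_2$, since none of the arguments in \cref{sec:components}--\cref{sec:overlap} use that the levels are equal.

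For the equivalence, I first observe from the definition \eqref{eq:cert_varah} that the only branch of $d_{\eps_1,\eps_2}(\theta)$ that can return a negative value is the middle one; the first and third branches are nonnegative by construction. Hence $d_{\eps_1,\eps_2}(\theta)<0$ if and only if $l_{\eps_1,\eps_2}(\theta)<0$. The $(\Leftarrow)$ direction is then immediate: if the set on which $d_{\eps_1,\eps_2}$ is negative has positive measure, it is nonempty, so the minimum (or infimum) over $(-\pi,\pi]$ is strictly negative. For $(\Rightarrow)$, suppose $d_{\eps_1,\eps_2}(\theta^\star)<0$ for some $\theta^\star$. By the two-level analogue of \cref{thm:suff}(i) this gives $\mathcal{R}_{z_0}(\theta^\star) \cap \interior \Lambda_{\eps_1}(A) \cap \interior \Lambda_{\eps_2}(B)\neq\varnothing$. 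Pick $z$ in this intersection; then $\smin(A-zI)<\eps_1$ and $\smin(B-zI)<\eps_2$, and by the continuity of $\smin(A-zI)$ and $\smin(B-zI)$ as functions of $z$, the strict inequalities persist in an open complex neighborhood of $z$. Intersecting with the family of rays $\mathcal{R}_{z_0}(\theta)$ for $\theta$ near $\theta^\star$ produces an open interval of angles on which $l_{\eps_1,\eps_2}(\theta)<0$, hence a positive-measure subset where $d_{\eps_1,\eps_2}<0$.

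For the implication, I use \eqref{eq:min_ev} directly. From $\min_\theta d_{\eps_1,\eps_2}(\theta)<0$ the argument above yields a point $z\in\interior\Lambda_{\eps_1}(A)\cap\interior\Lambda_{\eps_2}(B)$. Setting $\tilde\eps_1\coloneqq\smin(A-zI)<\eps_1$ and $\tilde\eps_2\coloneqq\smin(B-zI)<\eps_2$, the pseudospectral characterization \eqref{eq:pseudo_norm_res} gives $z\in\Lambda_{\tilde\eps_1}(A)\cap\Lambda_{\tilde\eps_2}(B)\neq\varnothing$, and then the infimum in \eqref{eq:min_ev} yields $\seplamV\leq\tilde\eps_1+\tilde\eps_2<\eps_1+\eps_2$, as required.

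The only slightly delicate point is the openness argument in the forward direction of the equivalence, which replaces the measure-theoretic argument in \cref{thm:suff}(vi) (that was formulated around a single $\eps$ and tied to $\seplamD$ through monotonicity). Because we no longer have a single parameter to monotonically decrease, I instead obtain positive measure directly from the openness of $\interior\Lambda_{\eps_1}(A)\cap\interior\Lambda_{\eps_2}(B)$ in $\C$ together with continuity of the map $\theta\mapsto\mathcal{R}_{z_0}(\theta)$; no additional machinery is needed, and the two-level hypothesis on $\eps_1$ and $\eps_2$ (that they are not singular values of $A-z_0I$ and $B-z_0I$, respectively) is used only to ensure $z_0$ itself is not on either boundary, so that the interior-intersection argument near $\theta^\star$ is not spoiled by the ray's source point.
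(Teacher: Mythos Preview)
Your argument is correct. The paper itself does not give an explicit proof of \cref{cor:varah_necc}; it simply declares the result ``another corollary of \cref{thm:suff}'' and moves on. Your write-up is the natural way to unpack that claim in the two-level setting, and you correctly identify that the monotonicity route used for statements (v)--(vii) of \cref{thm:suff} (which tracks a single parameter $\eps$ and compares to $\seplamD$) does not transfer directly, so you replace it with an openness argument around a point of $\interior\Lambda_{\eps_1}(A)\cap\interior\Lambda_{\eps_2}(B)$.

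One minor simplification: for the forward direction of the equivalence you could appeal directly to the two-level analogue of \cref{thm:suff}(iii), namely continuity of $l_{\eps_1,\eps_2}$, which immediately gives an open interval of angles where $l_{\eps_1,\eps_2}<0$ around $\theta^\star$. Your openness argument is essentially a re-derivation of this continuity, so nothing is lost, but invoking (iii) would shorten the step. Also, your closing remark that the singular-value hypothesis on $\eps_1,\eps_2$ is used ``only'' to keep $z_0$ off the boundaries slightly understates its role: it is what makes $a_{\eps_1}$ and $b_{\eps_2}$ (and hence $d_{\eps_1,\eps_2}$) well defined and continuous via \cref{thm:zero_eig} and \cref{thm:props_ps}. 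This does not affect your proof, since you only need the analogue of \cref{thm:suff}(i), but the parenthetical is a bit too narrow.
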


As the last statement in~\cref{cor:varah_necc} is not if-and-only-if,
$d_{\eps_1,\eps_2}$ does not allow us compute~$\seplamV$ with guaranteed accuracy.
However, by modifying \cref{alg:seplamd} to instead find minimizers of $\fV$ and use $d_{\eps_1,\eps_2}$,
we can compute locally optimal upper bounds for $\seplamV$ that at least guarantee the necessary condition
$\interior \ps[\eps_1](A) \cap \interior \ps[\eps_2](B) = \varnothing$ is satisfied,
as this is equivalent to $\min_{\theta \in (-\pi,\pi]} d_{\eps_1,\eps_2}(\theta) = 0$.
This is notably better than just computing upper bounds via finding minimizers of $\fV$, since the corresponding 
values of $\eps_1$ and $\eps_2$ associated with minimizers are not guaranteed to satisfy this necessary condition.
However, when either~$\eps_1=0$ or~$\eps_2=0$ holds at the computed minimizer, note that 
$\interior \ps[\eps_1](A) = \varnothing$ or $\interior \ps[\eps_2](B) = \varnothing$ holds, and 
so satisfying the necessary condition does not preclude the possibility that an eigenvalue of $A$ may be
in $\interior \ps[\eps_2](B)$ or vice versa. 
Thus, when approximating $\seplamV$ via this extended algorithm, one should always compute
\beq
	\label{eq:varah_check}
	\tilde \eps = \min \left\{ \min_{\lambda \in \Lambda(B)} \smin(A - \lambda I), \, \min_{\lambda \in \Lambda(A)} \smin(B - \lambda I) \right\},
\eeq
which computes an upper bound $\tilde \eps \geq \seplamV$ such that no eigenvalues of $A$ 
are in the interior of $\interior \ps[\tilde \eps](B)$ and vice versa.
Nevertheless, when optimization finds minimizers where neither $\eps_1$ nor $\eps_2$ is zero, 
then our certificate can be used to restart optimization if the necessary condition does not hold,
and hence obtain a better estimate for $\seplamV$.

\subsection{A different Chebfun-based algorithm to compute $\seplamV$}
\label{sec:varah_alg2}
Given $z_0 \in \C$, let function $v : [0,\pi) \to \R$ be defined as
\begin{equation}
	\label{eq:v_theta}
	v(\theta) \coloneqq \min_{r \in \R} v_\theta(r), 
	\quad \text{where} \quad 
	v_\theta(r) \coloneqq \smin(F_A(r,\theta)) + \smin(F_B(r,\theta)),
\end{equation}
i.e., $v(\theta)$ is the minimal value $\fV$ takes along the line defined by~$\theta$ and 
passing through~$z_0$.  It then immediately follows that
\begin{equation}
	\label{eq:slv_simple}
	\seplamV = \min_{\theta \in [0,\pi)} v(\theta).
\end{equation}
Since $v$ is continuous function defined on a finite interval, 
as in the alternative $\seplamD$ algorithm discussed in~\cref{rem:sld_simple}, we can
consider approximating $v$ with Chebfun in order to solve~\eqref{eq:slv_simple}.

Unfortunately evaluating $v$ for a given $\theta$ is quite difficult, 
as the level-set iteration for finding a global minimizer of $s_\theta$ described in~\cref{rem:sld_simple}
 does not extend to $v_\theta$.  
However, for some~\mbox{$\eps > \seplamV$}, say, $\eps = \fV(z_0)$, we can
easily calculate a finite interval $[r_1,r_2]$ such that~$v_\theta(r) > \eps$ 
must hold for all $r \not\in [r_1,r_2]$.
To do this, we simple apply~\cref{lem:eig_ham} 
to obtain the two extremal points, say, $a_1$ and $a_2$ with $a_1 \leq a_2$, in the $\eps$-level set of 
$\smin(F_A(r,\theta))$ with~$r$ varying and $\theta$ fixed, and then analogously, also obtain the two extremal
level-set points~$b_1$ and~$b_2$ of $\smin(F_B(r,\theta))$ with $b_1 \leq b_2$.
By taking $r_1 = \max\{ a_1,b_1 \}$ and $r_2  = \min\{a_2,b_2\}$, 
we have that any global minimizer of $v_\theta$ must lie in $[r_1,r_2]$,
since by construction, $v_\theta(r) > \eps$ outside this interval.
Thus, to obtain the value of~$v(\theta)$, we simply solve two eigenvalue problems
to obtain~$[r_1,r_2]$ and then apply Chebfun to approximate $v_\theta$ on $[r_1,r_2]$
in order to obtain its globally minimal value.

Using Chebfun to approximate $v$ over $[0,\pi)$, where for each $\theta$, 
the value of $v(\theta)$ is also computed by applying Chebfun to $v_\theta$,
does lead to quite an expensive algorithm, as many evaluations of~\mbox{$\smin(F_A(r,\theta))$}
and \mbox{$\smin(F_B(r,\theta))$} for different values of $\theta$ and $r$ are required.
However, this nested Chebfun-based algorithm nevertheless has the virtue of
being the very first algorithm to compute $\seplamV$, as opposed to just approximating it, 
e.g., within a factor of two by instead computing $\seplamD$.

Regarding the choice of $z_0$, one might be tempted to use a local minimizer of $\fV$,
 but there are pros and cons to doing so.
 On the upside, if $\eps = \fV(z_0)$ is close to $\seplamV$, $v$ 
 likely will be constant (with value $\eps$) on a much of $[0,\pi)$, or all of it if $\eps = \seplamV$,
 precisely because $z_0$ is a minimizer.
 This can greatly reduce the number of function evaluations required by Chebfun,
 but as discussed earlier in~\cref{sec:overlap}, functions with large constant portions 
 can actually cause Chebfun to terminate prematurely.
 As such, we generally recommend that a minimizer of $\fV$ not be used for $z_0$.

Finally, recalling our recommendation at the end of \cref{rem:sld_simple}, 
when computing $\seplamV$ via~\eqref{eq:slv_simple},
we also similarly recommend refining Chebfun's result via subsequently applying local optimization.
The upper bound given in \eqref{eq:varah_check} should also be computed.

\section{Numerical experiments}
\label{sec:experiments}
All experiments were done in \matlab\ R2021a on a computer with 
two Intel Xeon Gold 6130 processors (16 cores each, 32 total) and 192GB of RAM running CentOS Linux~7.
We implemented our new methods using a recent build of Chebfun (commit \texttt{119f9ad}) 
with \texttt{splitting} enabled and \texttt{novectorcheck},
and for simplicity, computed eigenvalues of $C_\theta$ and $S_\theta$
using \texttt{eig} in \matlab;
to account for rounding errors, 
the real part of any computed eigenvalue $\lambda$ was set to zero if~\mbox{$| \Re \lambda | \leq 10^{-8}$}.
For~\cref{alg:seplamd}, we used v1.6.4 of GRANSO with \texttt{opt\_tol=1e-14}
to find local minimizers and used lines instead of rays for our globality certificates, as we observed
that this was usually a bit faster.
We forgo including any parallel processing experiments here, as we have previously validated the large benefits
of using parallelism with our interpolation-based globality certificate approach in~\cite[Section~5.2]{Mit21}.
The codes used to generate the results in this paper are included in the supplementary materials,
and we plan to add robust implementations to ROSTAPACK: RObust STAbility PACKage \cite{rostapack}.

\def\scaledps{0.32}
\def\seplvpscap{$\ps[\eps_1](A(s))$ and $\ps[\eps_2](B(s))$ for $\eps_1 + \eps_2 = \seplamVAB{A(s)}{B(s)}$ and $s=10$ (left), $s = 5$ (middle), and $s=0$ (right).}

\begin{figure}[t]
\centering
\subfloat[$\ps(A(s))$ and $\ps(B(s))$ for $\eps =  \seplamDAB{A(s)}{B(s)}$ and $s=10$ (left), $s = 5$ (middle), and $s=0$ (right).]{
\includegraphics[scale=\scaledps,trim={0.0cm 0.0cm 0.0cm 0.0cm},clip]{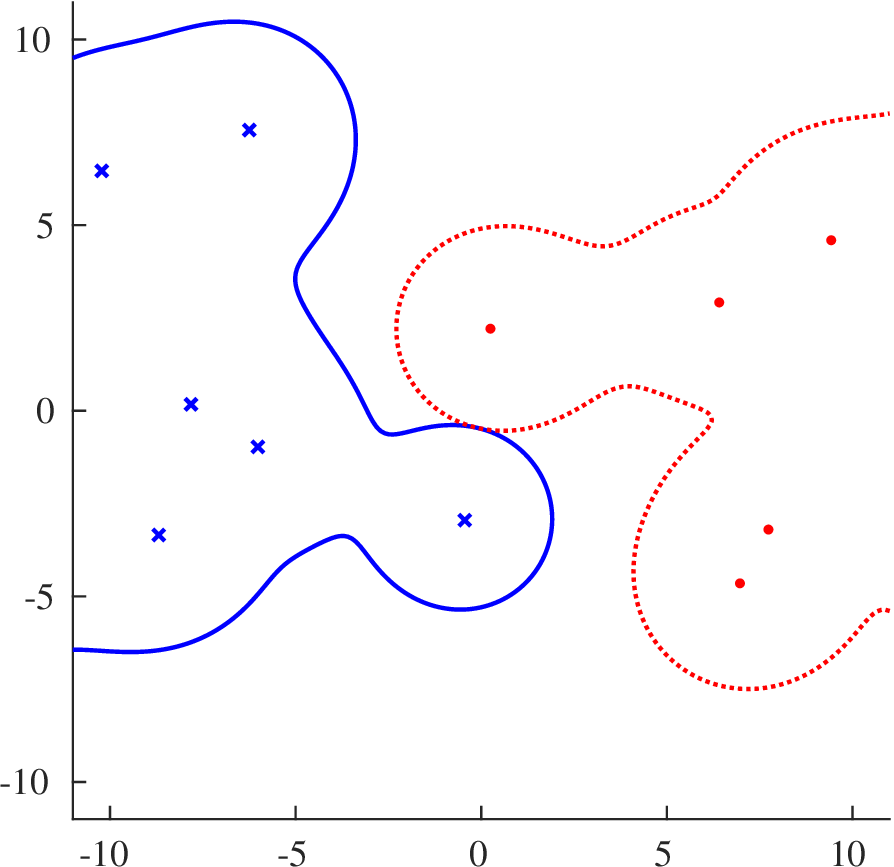} 
\includegraphics[scale=\scaledps,trim={0.cm 0.0cm 0.0cm 0.0cm},clip]{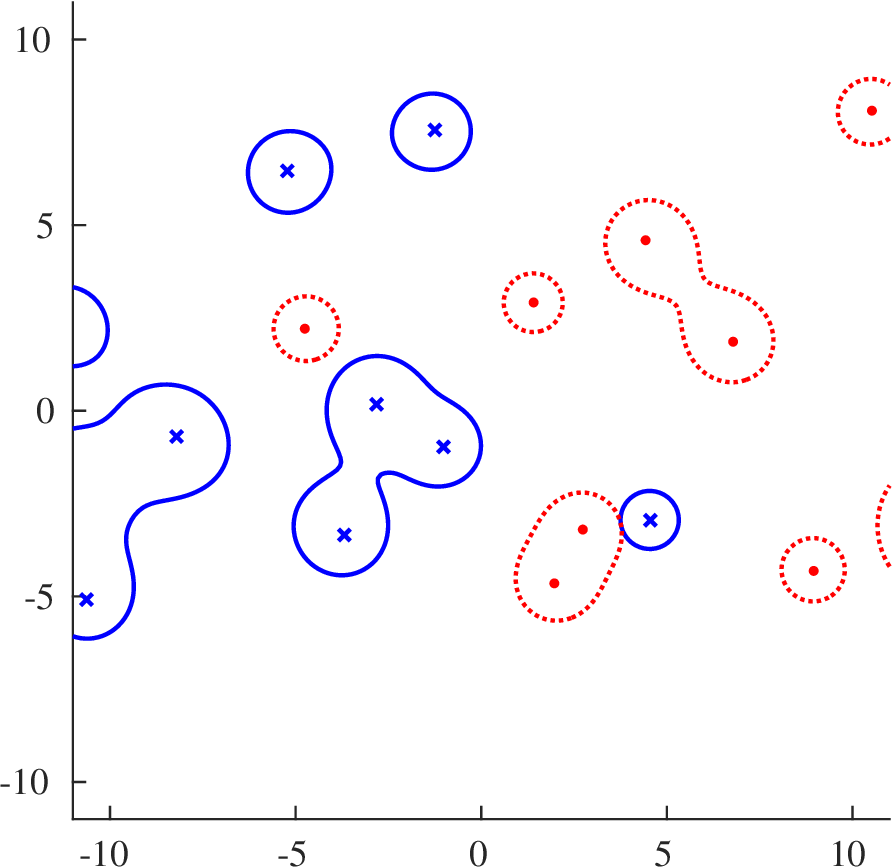} 
\includegraphics[scale=\scaledps,trim={0.0cm 0.0cm 0.0cm 0.0cm},clip]{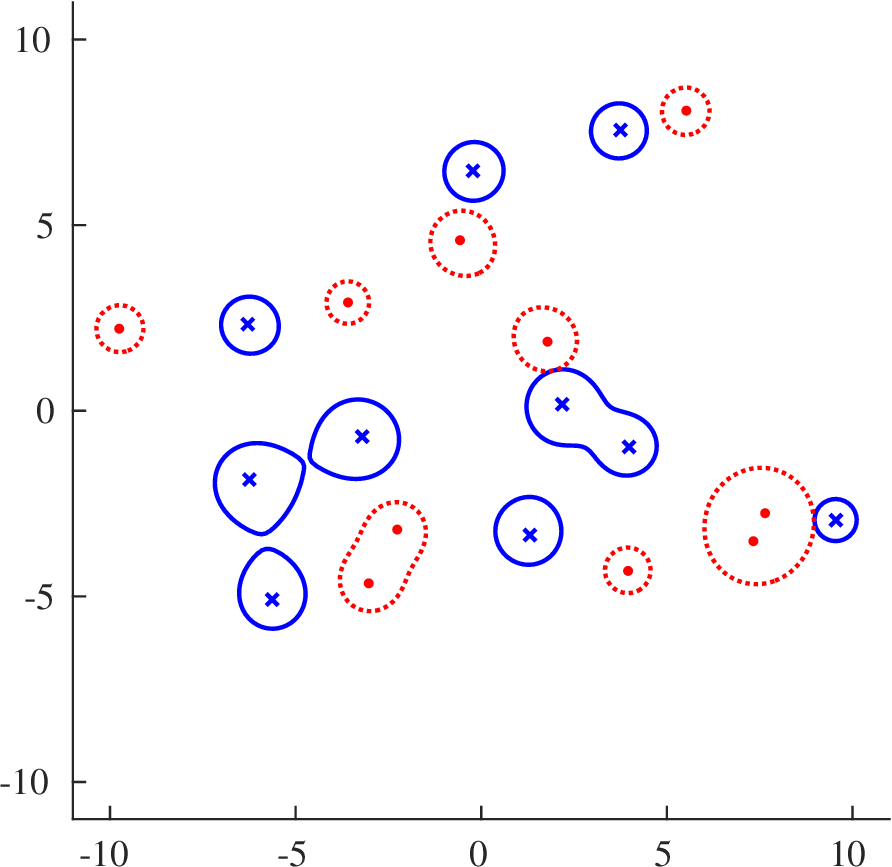} 
\label{fig:ps_sepld}
} 
\\
\subfloat[\seplvpscap]{
\includegraphics[scale=\scaledps,trim={0.0cm 0.0cm 0.0cm 0.0cm},clip]{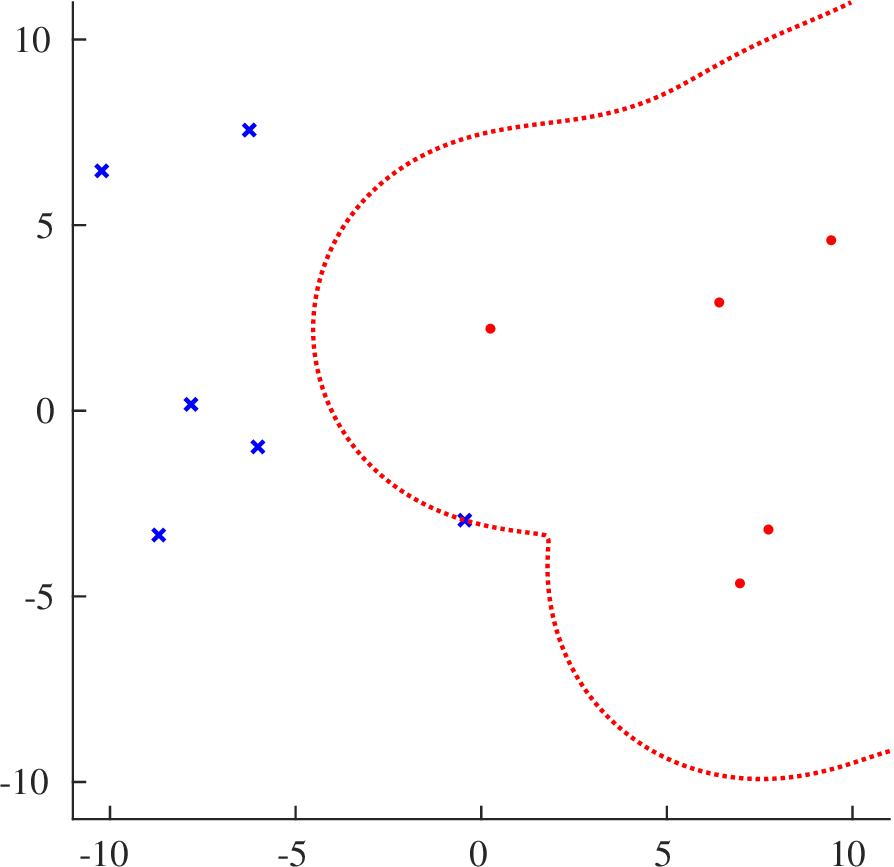} 
\includegraphics[scale=\scaledps,trim={0.cm 0.0cm 0.0cm 0.0cm},clip]{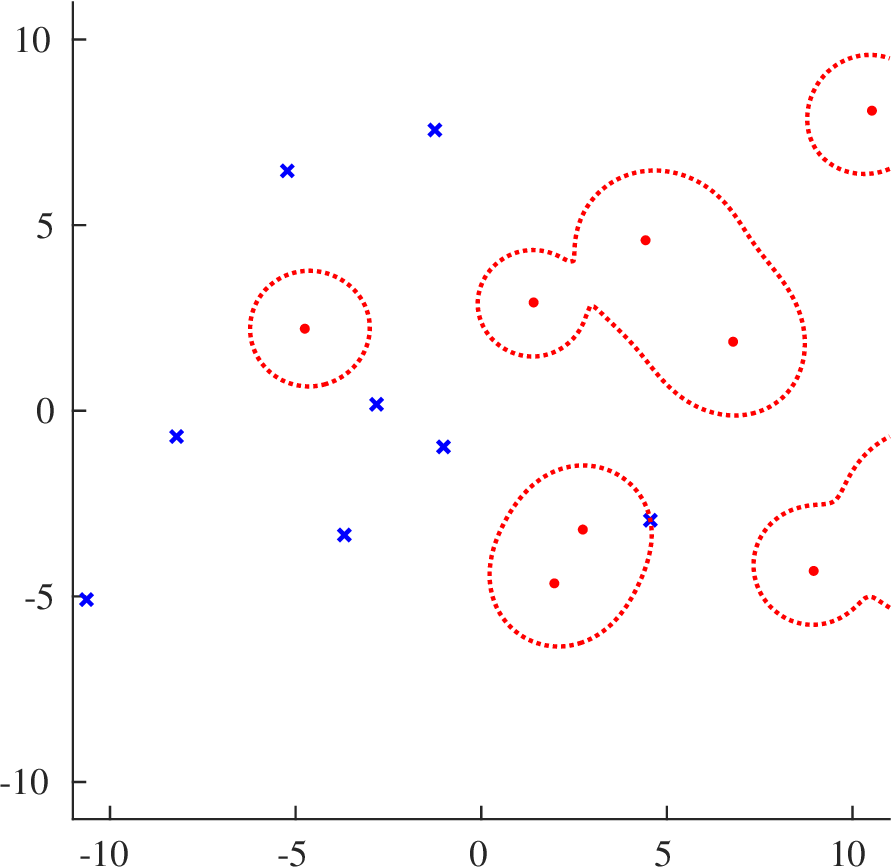} 
\includegraphics[scale=\scaledps,trim={0.0cm 0.0cm 0.0cm 0.0cm},clip]{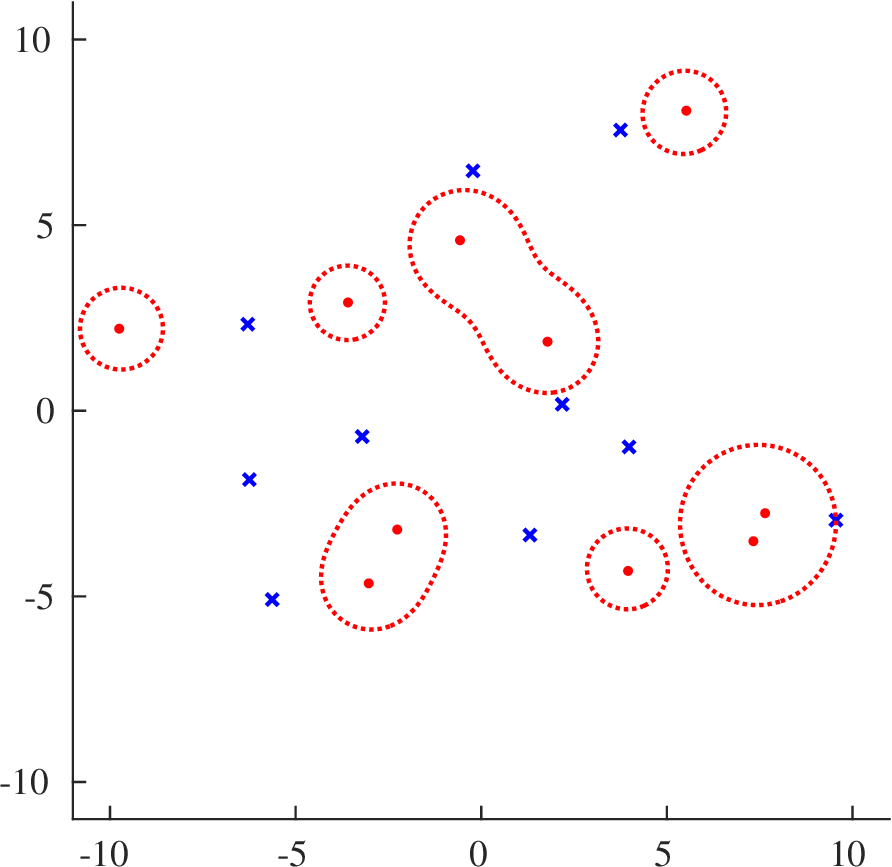} 
\label{fig:ps_seplv}
} 
\caption{
For the example described in~\cref{sec:explore},
pseudospectra of $A(s)$ and $B(s)$ (respectively solid and dotted contours)
corresponding to Demmel's and Varah's versions of sep-lambda are shown 
along with the eigenvalues of $A(s)$ and $B(s)$ (respectively x's and dots) for $s \in \{10,5,0\}$.
In the top right plot, $\ps(A(s))$ and $\ps(B(s))$ appear to touch at two places, but 
actually there is only one contact point (the one closer to the origin).
In the three lower plots, Varah's sep-lambda is attained with $\eps_1=0$.
}
\label{fig:shifted_ex_ps}
\end{figure}

\subsection{An exploratory example}
\label{sec:explore}
We first consider a simple example to explore the properties of our methods. 
We generated two different complex $10 \times 10$ matrices 
using \texttt{randn} and rescaled them so that the resulting matrices~$A$ and~$B$ 
both had spectral radii of 10.
Then, for $s \in \{10,5,0\}$, we considered Demmel's and Varah's versions 
of sep-lambda for $A(s) = A - sI$ and $B(s) = B + sI$.
When $s=0$, the spectra of~$A(s)$ and~$B(s)$ are ``centered" are the origin,
but when $s$ is increased, the centers of the two spectra,~$-s$ and~$s$, 
become more and more distant from each other; 
hence, on a macro level, increasing $s$ generally increases the value of $\seplamDAB{A(s)}{B(s)}$,
and this is always true once $s$ becomes sufficiently large.
Estimates of $\seplamDAB{A(s)}{B(s)}$ were computed using \cref{alg:seplamd},
while estimates of~$\seplamVAB{A(s)}{B(s)}$ were computed using both of our algorithms from~\cref{sec:varah};
for Varah's sep-lambda, the estimates for both our algorithms agreed exactly since 
they were obtained at an eigenvalue of $A(s)$.
For~\cref{alg:seplamd} and its extension to Varah's sep-lambda,
we used~\mbox{$10 + 10\imagunit$} as an initial point for optimization,
which was chosen so that some restarts would be observed.
In~\cref{fig:shifted_ex_ps}, we show the resulting pseudospectra of~$A(s)$ and~$B(s)$ 
at the perturbation levels given by~$\seplamDAB{A(s)}{B(s)}$ and~$\seplamVAB{A(s)}{B(s)}$.

We give performance statistics of \cref{alg:seplamd} on our exploratory example in \cref{table:overall}.
For~\mbox{$s=10$}, GRANSO found a global minimizer of $\fD$ from the initial point and so only a single certificate
computation was needed in this case, while two certificates were needed for the $s=5$ and $s=0$ instances.
On both of these, the first round of optimization only found a local minimizer, and so the first certificate instead returned
new points to restart optimization.  But as can be seen from~\cref{table:overall},
this happened with very little effort; only 15 evaluations of $d_\eps$ were needed to find new starting points.
\cref{fig:certs} shows that the corresponding final configurations of~$d_\eps$
are all nonnegative, as they should be when~$\eps = \seplamDAB{A(s)}{B(s)}$, per~\cref{cor:demmel_global}.
Overall, we see that additional effort was needed to approximate~$d_\eps$ as $s$ is decreased,
which is as we would expect because 
the behavior of~$d_\eps$ generally becomes more complicated in proportion to how much the two (pseudo)spectra 
``intermingle", which for our test examples, is roughly controlled by~$s$.
Recalling that the search point $z_0$ defining $d_\eps$ is near the origin for these problems, 
this effect can be clearly observed by looking at~\cref{fig:ps_sepld,fig:certs}
(and is also illustrated in~\cref{fig:comps_ex}, where each quadrant of the complex plane 
has a different amount of pseudospectral ``intermingling").
For~$s=10$, the eigenvalues of~$A(s)$ and $B(s)$ are separated from each other the most,
which in turn leads to the final~$d_\eps$  being rather straightforward; see \cref{fig:cert10}.
However, the separation between the eigenvalues of~$A(s)$ and~$B(s)$ is reduced via making~$s$ smaller,
and hence we see that $d_\eps$ becomes increasingly more complicated and with more discontinuities;
see \cref{fig:cert5} and \cref{fig:cert0}.

Performance data for our two algorithms for Varah's sep-lambda are given in~\cref{table:varah_algs},
where we see a similar effect with respect to changing shift $s$. 
However, the main takeaway here is that, as predicted, 
our algorithm from \cref{sec:varah_alg2} 
is indeed many times slower than our extension of~\cref{alg:seplamd} described in~\cref{sec:varah_alg1}.

\def\scaledfn{0.359}
\begin{figure}[!ht]
\centering
\subfloat[$s = 10$: $d_\eps(\theta)$ in linear scale (left) and in $\log_{10}$ scale (right)]{
\includegraphics[scale=\scaledfn,trim={0.0cm 0.0cm 0.0cm 0.0cm},clip]{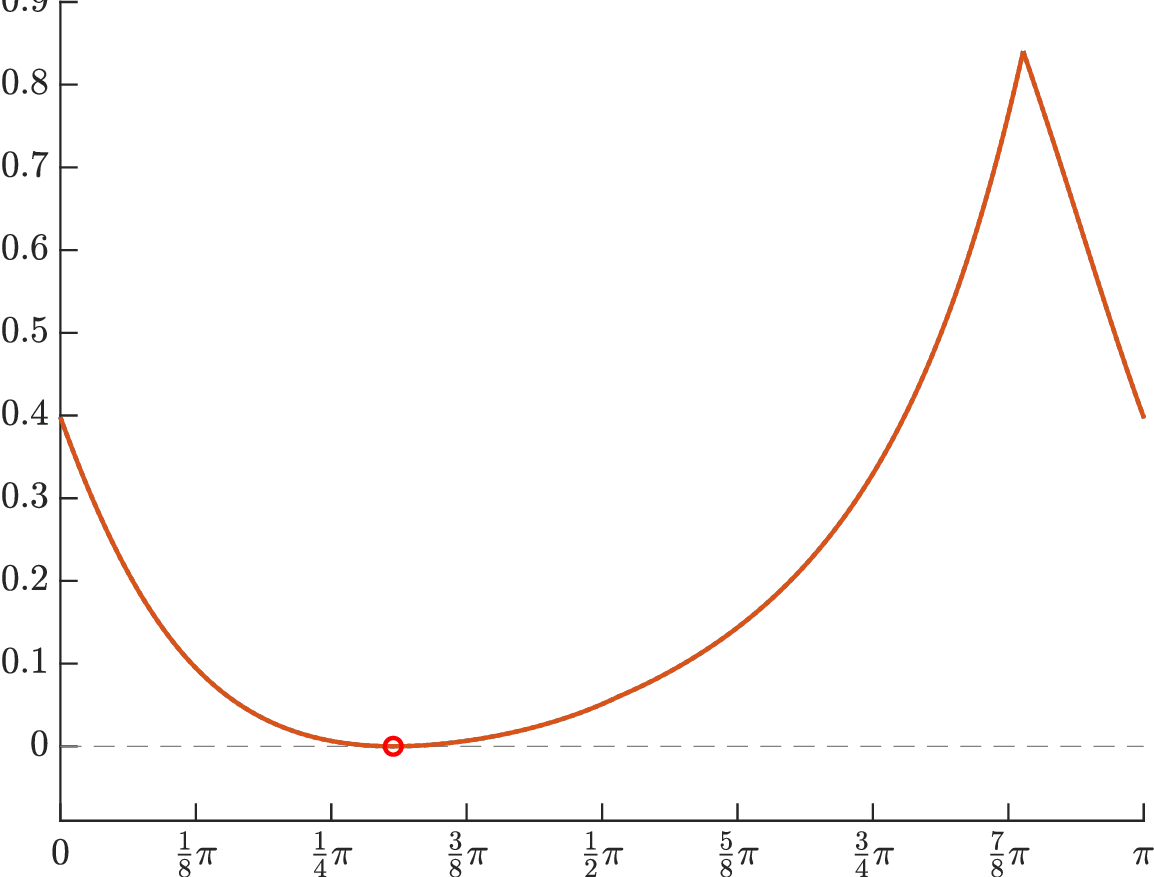} \ \ \
\includegraphics[scale=\scaledfn,trim={0.0cm 0.0cm 0.0cm 0.0cm},clip]{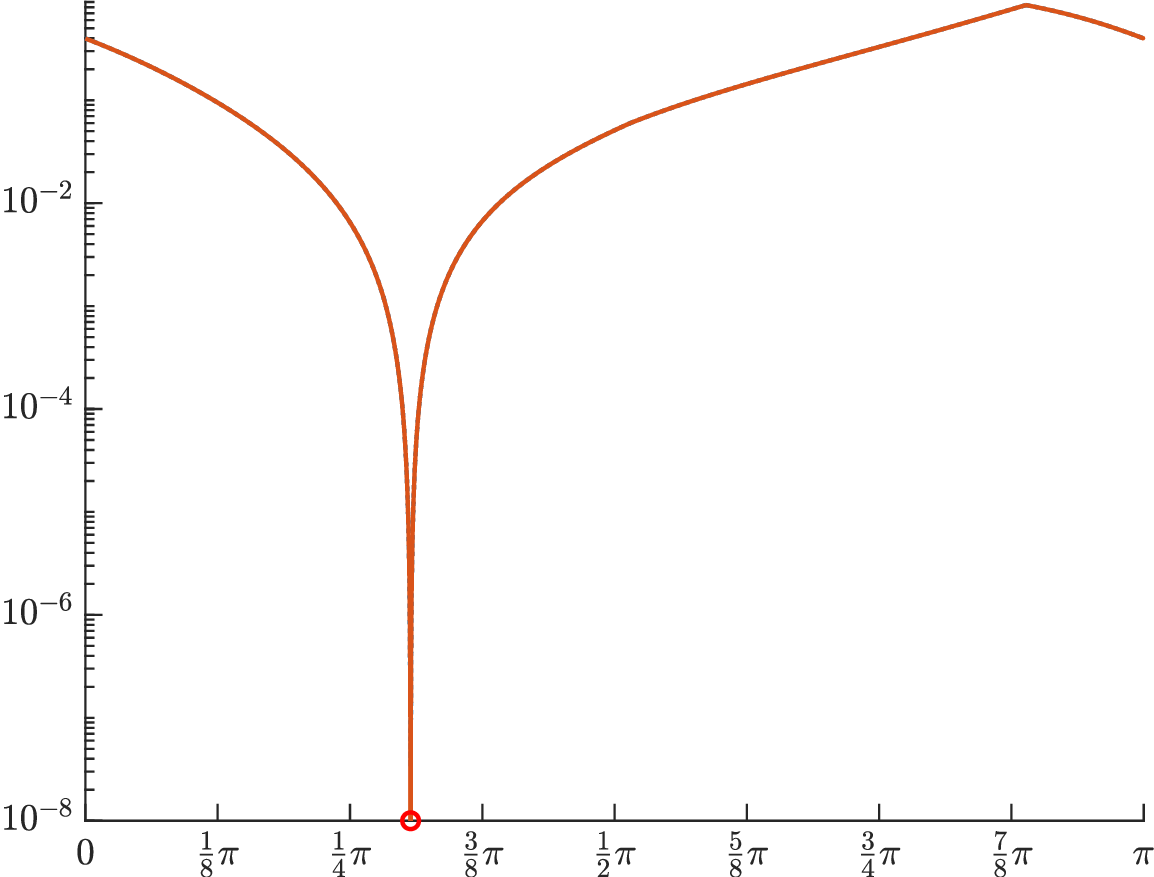} 
\label{fig:cert10}
} 
\\
\subfloat[$s = 5$: $d_\eps(\theta)$ in linear scale (left) and in $\log_{10}$ scale (right)]{
\includegraphics[scale=\scaledfn,trim={0.0cm 0.0cm 0.0cm 0.0cm},clip]{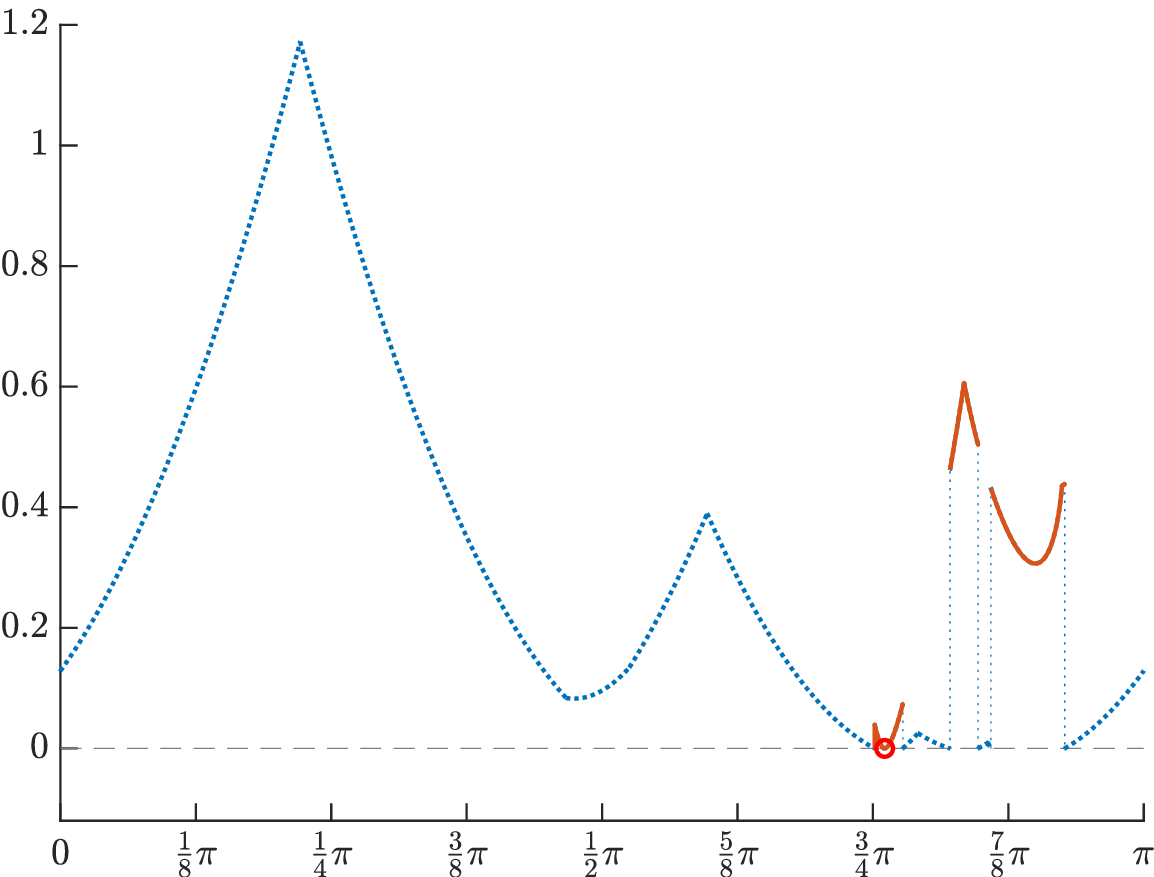} \ \ \
\includegraphics[scale=\scaledfn,trim={0.0cm 0.0cm 0.0cm 0.0cm},clip]{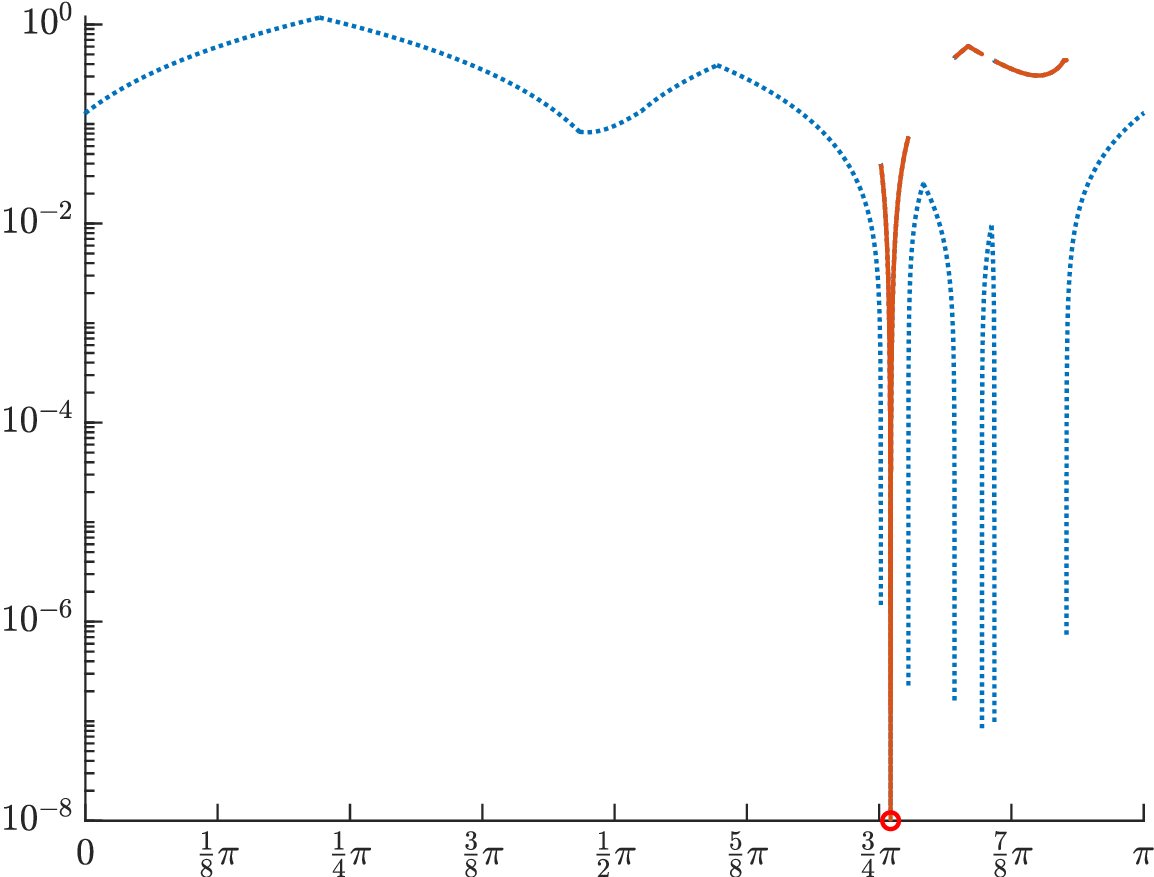}
\label{fig:cert5}
}
\\
\subfloat[$s = 0$: $d_\eps(\theta)$ in linear scale (left) and in $\log_{10}$ scale (right)]{
\includegraphics[scale=\scaledfn,trim={0.0cm 0.0cm 0.0cm 0.0cm},clip]{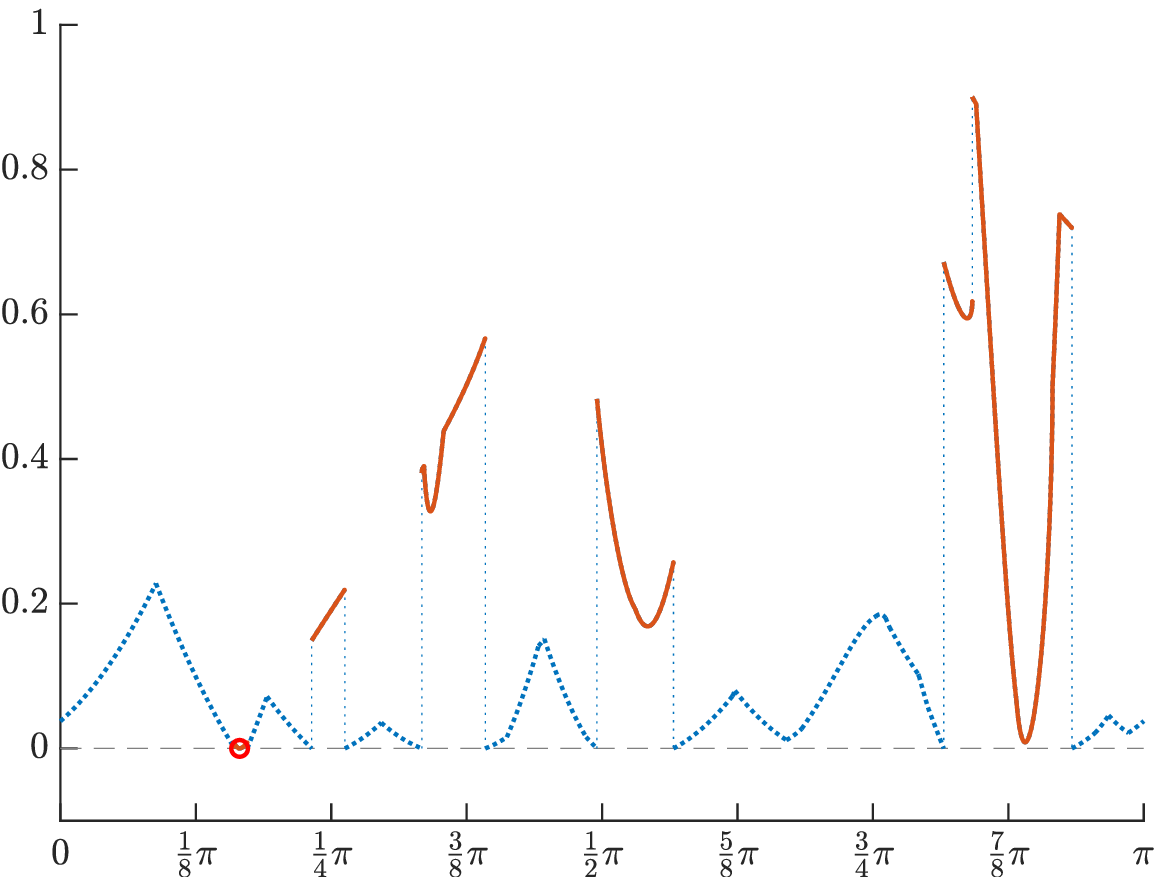} \ \ \
\includegraphics[scale=\scaledfn,trim={0.0cm 0.0cm 0.0cm 0.0cm},clip]{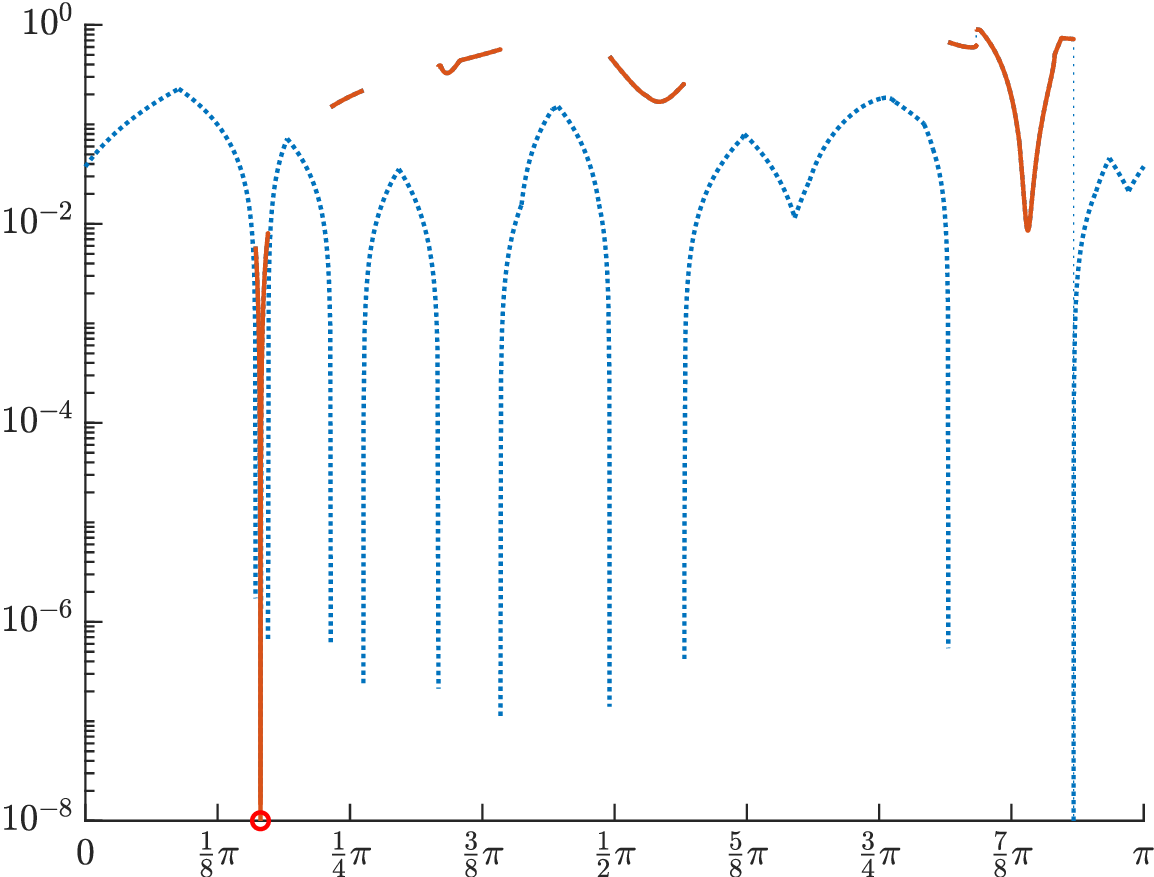} 
\label{fig:cert0}
}
\caption{
Each subfigure shows the final $d_\eps$ computed by \cref{alg:seplamd} 
for the example from \cref{sec:explore} for~$s \in \{10,5,0\}$.
The components of $d_\eps$ are plotted as follows: $\afn + \bfn$ (dotted)
and $d_\eps^{AB}$ (solid); $\olfn$ does not appear as it is never negative
when $\eps=\seplamD$.
The circle denotes the angle $\theta$ (with respect to $z_0$) 
associated with the best minimizer of $\fD$ obtained and corresponds to 
the single place where~$d_\eps(\theta) = 0$, which is more easily seen in the $\log_{10}$ plots on the right.
}
\label{fig:certs}
\end{figure}

\begin{table}[t]
\caption{
Performance data for \cref{alg:seplamd} for Demmel's version of sep-lambda
on the example given in~\cref{sec:explore} for $s \in \{10,5,0\}$:
``$\fD$~evals." is the total number of evaluations of $\fD$ during local optimization,
``Certs." is the total number of certificates attempted, 
``All" and ``Final" are the total number of evaluations of $d_\eps$ over all certificates and just the final one, respectively,
and the final column is the total running time in seconds of \cref{alg:seplamd}.
}
\centering
\setlength{\tabcolsep}{6.5pt} 
\begin{tabular}{ c | c | c | rr | D{.}{.}{2} } 
\toprule
\multicolumn{1}{c}{} & \multicolumn{1}{c}{}  & \multicolumn{1}{c}{}  & 
 \multicolumn{2}{c}{$d_\eps$ evals.} & \multicolumn{1}{c}{Time (sec.)} \\
\cmidrule(lr){4-5} 
\cmidrule(lr){6-6} 
	\multicolumn{1}{c}{$s$} & 
	\multicolumn{1}{c}{$\fD$ evals.} &
	\multicolumn{1}{c}{Certs.} & 
	\multicolumn{1}{c}{All} & 
	\multicolumn{1}{c}{Final} &
	\multicolumn{1}{c}{Alg.~\ref{alg:seplamd}} \\
\midrule
  10 &    156 &   1 &     2154 &     2154 &     4 \\ 
   5 &    150 &   2 &     8587 &     8572 &     5 \\ 
   0 &    205 &   2 &    22838 &    22823 &    13 \\ 
\bottomrule
\end{tabular}
\label{table:overall}
\end{table}

\begin{table}[t]
\caption{
Performance data for our two algorithms described in \cref{sec:varah}
for Varah's version of sep-lambda
on the example given in~\cref{sec:explore} for $s \in \{10,5,0\}$.
For the extension of \cref{alg:seplamd} described in \cref{sec:varah_alg1},
``$\fV$~evals." is the total number of evaluations of $\fV$ during all optimization runs,
``Certs." is the total number of certificates attempted,
and ``$d_{\eps_1,\eps_2}$~evals." is the total number of evaluations of $d_{\eps_1,\eps_2}$ over all certificates.
For our algorithm described in \cref{sec:varah_alg2}, 
``$\fV$~evals." is the total number of evaluations of~$\fV$,
``$v$~evals." is the total number of evaluated of the function $v$ defined in~\eqref{eq:v_theta}.
Finally, we report the total running time in seconds for each method respectively 
under the ``\cref{sec:varah_alg1}" and ``\cref{sec:varah_alg1}" columns.
}
\centering
\setlength{\tabcolsep}{6.5pt} 
\begin{tabular}{ c | c | c | D{.}{.}{2} | r | c | cc } 
\toprule
	\multicolumn{1}{c}{} & 
	\multicolumn{3}{c}{Alg. from \cref{sec:varah_alg1}} & 
	\multicolumn{2}{c}{Alg. from \cref{sec:varah_alg2}} &
	\multicolumn{2}{c}{Time (sec.)} \\
\cmidrule(lr){2-4} 
\cmidrule(lr){5-6} 
\cmidrule(lr){7-8} 
	\multicolumn{1}{c}{$s$} & 
	\multicolumn{1}{c}{$\fV$ evals.} &
	\multicolumn{1}{c}{Certs.} & 
	\multicolumn{1}{c}{$d_{\eps_1,\eps_2}$ evals.} & 
	\multicolumn{1}{c}{$\fV$ evals.} &
	\multicolumn{1}{c}{$v$ evals.} & 
	\multicolumn{1}{c}{\cref{sec:varah_alg1}} &
	\multicolumn{1}{c}{\cref{sec:varah_alg2}} \\
\midrule
  10 &    188 &   1 &   4494 &    2512417 &   2044 &    2 &  139 \\ 
   5 &    142 &   1 &   4583 &   10161448 &   5900 &    2 &  548 \\ 
   0 &    150 &   1 &   6859 &   14736950 &   6502 &    2 &  777 \\ 
\bottomrule
\end{tabular}
\label{table:varah_algs}
\end{table}

\subsection{Comparing \cref{alg:seplamd} to the method of Gu and Overton}
We now do a comparison of~\cref{alg:seplamd} 
against the \texttt{seplambda} routine\footnote{Available at \url{https://cs.nyu.edu/faculty/overton/software/seplambda/}.},
which is Overton's \matlab\ implementation of his $\seplamD$ algorithm with Gu \cite{GuO06a}.
To do this, we generated two more examples in the manner as described in \cref{sec:explore} but now for $m=n=20$
and $m=n=40$.  For each, including our earlier \mbox{$m=n=10$} example, we computed $\seplamDAB{A(s)}{B(s)}$
for~$s=0$ and~$s=m=n$ using both our new method and \texttt{seplambda}.
In order to obtain  $\seplamDAB{A(s)}{B(s)}$ to high precision, 
 we set the respective tolerances for both methods to~$10^{-14}$.
For this comparison, we always initialized the first phase of optimization for our method from the origin.

\begin{table}[t]
\caption{Comparing \cref{alg:seplamd} and the method of Gu and Overton for Demmel's version of sep-lambda.  
The columns are the same as described in \cref{table:overall}
except that we now additionally give
the problem size under ``$m=n$", 
the total running times in seconds of both methods, respectively ``Alg.~\cref{alg:seplamd}" and ``GO", 
and the relative difference between the estimates computed by both methods (Rel. Diff.),
with positive indicating our method returned a better (lower) equal estimate for $\seplamDAB{A(s)}{B(s)}$.
}
\centering
\setlength{\tabcolsep}{6.5pt} 
\begin{tabular}{ c | c | c | c | rr | rr | c } 
\toprule
\multicolumn{1}{c}{} &\multicolumn{1}{c}{} & \multicolumn{1}{c}{}  & \multicolumn{1}{c}{}  &  
\multicolumn{2}{c}{$d_\eps$ evals.} & \multicolumn{2}{c}{Time (sec.)} & \multicolumn{1}{c}{} \\
\cmidrule(lr){5-6} 
\cmidrule(lr){7-8} 
	\multicolumn{1}{c}{$m=n$} & 
	\multicolumn{1}{c}{$s$} & 
	\multicolumn{1}{c}{$\fD$ evals.} &
	\multicolumn{1}{c}{Certs.} & 
	\multicolumn{1}{c}{All} & 
	\multicolumn{1}{c}{Final} &
	\multicolumn{1}{c}{Alg.~\ref{alg:seplamd}} &
	\multicolumn{1}{c}{GO} &
	\multicolumn{1}{c}{Rel. Diff.} \\
\midrule
  10 &   10 &    65 &   1 &    2154 &    2154 &       3 &      16 & $3.5 \times 10^{-14}$ \\ 
  10 &    0 &    75 &   1 &   23287 &   23287 &      14 &      15 & $0$ \\ 
  20 &   20 &    97 &   1 &    4746 &    4746 &      15 &    1481 & $1.1 \times 10^{-13}$ \\ 
  20 &    0 &   346 &   3 &   31786 &   31756 &      78 &    1408 & $1.3 \times 10^{-13}$ \\ 
  40 &   40 &   128 &   2 &    5973 &    5910 &      95 &  237425 & $3.3 \times 10^{-14}$ \\ 
  40 &    0 &   295 &   3 &   29261 &   29231 &     407 &  230251 & $2.0 \times 10^{-12}$ \\ 
\bottomrule
\end{tabular}
\label{table:comp}
\end{table}

A performance overview is reported in \cref{table:comp}.  In terms of accuracy,
the estimates computed by our method for~$\seplamDAB{A(s)}{B(s)}$ have high agreement with 
those computed by \texttt{seplambda}, though our method did return slightly better (lower) values
for all the problems.  On the nonshifted~($s=0$) examples, 
our new method was 1.1~times faster than \texttt{seplambda} for $m=n=10$, 
18.1~times faster for $m=n=20$, and 566.0~times faster for $m=n=40$.  
Clearly, as the problems get larger, our method will be even faster relative to \texttt{seplambda}.
For the shifted examples ($s=m=n$), the performance gaps are even wider:
our new method was 6.2~times faster than \texttt{seplambda} for $m=n=10$, 
98.6~times faster for $m=n=20$, and 2495.7~times faster for~$m=n=40$.  
The ``$d_\eps$ evals." data for $s=0$ and $s=m=n$ in \cref{table:comp}
for these problems also 
indicate that $d_\eps$ is generally less complex the more the eigenvalues of~$A$ and~$B$ are separated.
Meanwhile, the running times of \texttt{seplambda} were relatively unchanged by the value of~$s$, 
as shifting the eigenvalues of $A$ and $B$ has no direct effect on its computations.
In \cref{table:comp}, we can again infer that restarts in our method, when needed, 
happened with relatively few evaluations of~$d_\eps$.
Per \cite[Section~5.2]{Mit21}, the main work done in
interpolation-based globality certificates is ``embarrassingly parallel",
and consequently, our method can further be accelerated by about an order of magnitude using parallel processing,
and substantially more if minor tweaks are made to Chebfun to make it 
more amenable to parallelism.

\begin{remark}
Recalling the end of~\cref{sec:background}
on possibly replacing the bisection phase of Gu and Overton's with
optimization-with-restarts, we now empirically validate
our claim that the benefit of such a modification is indeed quite limited and diminishes as 
the problem dimensions increase. 
Besides recording the total time to run \texttt{seplambda} on each problem for~\cref{table:comp},
we also recorded the time its initialization procedure required.
Then, an upper bound for the best possible speedup is simply 
the total time divided by the initialization time,
where we idealistically assume that opimization-with-restarts has zero cost.
For  $m=n$ respectively equal to $10$, $20$, and $40$, the computed ratios 
were approximately 3.5, 2.2, and~1.6.
Obviously, even these idealized speedups are nowhere near sufficient to 
overcome the very large performance gaps shown in~\cref{table:comp}
for $m=n=20$, let alone $m=n=40$, although such a modified version of \texttt{seplambda} 
would be close in performance to our method on the $m=n=10$, $s=10$ problem
and likely pull ahead for the $m=n=10$,~$s=0$ problem.
However, if we enabled parallel processing for \cref{alg:seplamd},
then it would again be fastest on this problem too and probably by a large margin.
Finally, note that if \texttt{seplambda} were further modified by
also adapting the divide-and-conquer technique of~\cite{GuMOetal06},
it still would be significantly slower than~\cref{alg:seplamd}, except for maybe the tiniest of problems.
In the context of computing the distance to uncontrollability,
we compared our interpolation-based globality certificates methodology with the method 
of~\cite{GuMOetal06}, which uses both optimization-with-restarts and divide-and-conquer and also
does not have any expensive initialization procedure, and our approach
was roughly $5$ to $43$ times faster depending on the dimension; see~\cite[Section~5.1]{Mit21}.
\end{remark}

\subsection{Scaling performance of \cref{alg:seplamd}}
Finally, we examine the scaling performance of~\cref{alg:seplamd} on some larger problems,
which we constructed in the same fashion as before except that here we generated 
complex matrices~$A$ and $B$~via \texttt{sprandn} with a density of $0.1$;
this change was done solely to be able to store the matrices explicitly 
while keeping the file sizes small for up to $m=n=800$.
For these problem sizes, it was not feasible to attempt running Gu and Overton's method,
so in~\cref{table:scaling}, we only give performance data for~\cref{alg:seplamd}.
The accuracy of each estimate $\eps$ for $\seplamDAB{A(s)}{B(s)}$ computed by~\cref{alg:seplamd}
was verified by creating a sufficiently high resolution plot of $\ps(A(s))$ and~$\ps(B(s))$ 
and inspecting it to see whether or not the interiors of the two pseudospectra overlap.
This visual check suffices to confirm the high accuracy of our new method because, per~\cref{sec:find_local_mins}, 
local minimizers discovered on every iteration of~\cref{alg:seplamd}
will be computed to high accuracy, and the fact 
that~$\eps > \seplamDAB{A(s)}{B(s)}$ if and only 
if~\mbox{$\interior \ps(A(s)) \cap \interior \ps(B(s)) \neq \varnothing$};
hence, to assess the accuracy of a computed estimate~$\eps$, we need only confirm whether or not~\cref{alg:seplamd}
converged to a global minimizer of $\fD$ or only a local one, which is done by looking for the
absence or presence, respectively, of pseudospectral overlap.
For the pair of smallest problems~($m=n=100$), \cref{alg:seplamd} respectively
took about $11$ and $50$ minutes, while on the other extreme,
 \cref{alg:seplamd} needed about~$6$ and~$37$ hours, respectively,
 for the two $m=n=800$ problem instances.
Again, using parallel processing can reduce these 
running times dramatically.  
 Interestingly, for the intermediate sizes of $m=n=200$ and $m=n=400$,
 we actually see that \cref{alg:seplamd} was slightly more expensive on the instances
 with nonzero $s$, which suggests that the spectra of $A(s)$ and $B(s)$ 
for these particular examples would need to be shifted even further apart 
in order for the complexity of $d_\eps$ to decrease.
Over all the problems tested, we see that~\cref{alg:seplamd}
required at most four restarts before converging, but once again, the costs of these restarts 
was generally negligible, with the one exception being the $m=n=800$, $s=0$
problem, where we can infer that the total cost of the four restarts was approximately 
$10\%$ of the overall running time.

\begin{table}[t]
\caption{The columns are the same as described in \cref{table:comp}
except that here we only give running times of \cref{alg:seplamd} and its computed estimates 
of $\seplamDAB{A(s)}{B(s)}$.
The running time and accuracy comparisons with Gu and Overton's method 
are not provided since it would have taken far too long to run their method on these larger problems.
}
\centering
\setlength{\tabcolsep}{6.5pt} 
\begin{tabular}{ c | c | c | c | rr | r | l } 
\toprule
\multicolumn{1}{c}{} &\multicolumn{1}{c}{} & \multicolumn{1}{c}{}  & \multicolumn{1}{c}{}  &  
\multicolumn{2}{c}{$d_\eps$ evals.} & \multicolumn{1}{c}{Time (sec.)} & \multicolumn{1}{c}{} \\
\cmidrule(lr){5-6} 
\cmidrule(lr){7-7} 
	\multicolumn{1}{c}{$m=n$} & 
	\multicolumn{1}{c}{$s$} & 
	\multicolumn{1}{c}{$\fD$ evals.} &
	\multicolumn{1}{c}{Certs.} & 
	\multicolumn{1}{c}{All} & 
	\multicolumn{1}{c}{Final} &
	\multicolumn{1}{c}{Alg.~\ref{alg:seplamd}} &
	\multicolumn{1}{c}{$\seplamDAB{A(s)}{B(s)}$} \\
\midrule
 100 &  100 &    89 &   1 &    5425 &    5425 &     704 & $6.1677176880084 \times 10^{0}$ \\ 
 100 &    0 &   334 &   4 &   23689 &   23451 &    3045 & $2.5004731832266 \times 10^{-2}$ \\ 
 200 &  200 &   210 &   2 &   23170 &   23155 &   11563 & $6.3206868631252 \times 10^{0}$ \\ 
 200 &    0 &   319 &   2 &   20846 &   20831 &   10187 & $4.2654521922541 \times 10^{-2}$ \\ 
 400 &  400 &   123 &   2 &   22456 &   22425 &   40966 & $6.0394981396743 \times 10^{0}$ \\ 
 400 &    0 &   392 &   5 &   18237 &   18113 &   33931 & $4.0258158186612 \times 10^{-2}$ \\ 
 800 &  800 &   153 &   2 &    3113 &    3098 &   22112 & $1.0584889222355 \times 10^{1}$ \\ 
 800 &    0 &   383 &   5 &   21962 &   19436 &  131887 & $9.9483548512835 \times 10^{-3}$ \\ 
\bottomrule
\end{tabular}
\label{table:scaling}
\end{table}

\section{Concluding remarks}
\label{sec:conclusion}
In this paper, we have introduced a new method to compute Demmel's version of sep-lambda
that is much faster than the only previous known algorithm (due to Gu and Overton).
Under our assumption that approximation of $d_\eps$ by interpolation 
is reliable, our method computes $\seplamD$ to arbitrary accuracy and generally 
behaves like a method with cubic work complexity, albeit one with a high constant factor.
Nevertheless, our new approach is so much faster that it is now possible to calculate $\seplamD$ for moderately sized problems,
e.g., for~$m,n$ in the thousands,
which were simply intractable when using Gu and Overton's algorithm.
We have also extended our algorithm to tackle Varah's version of sep-lambda. 
Although in this case global optimality cannot be guaranteed, 
the extension does rapidly compute locally optimal approximations which satisfy the necessary condition for global optimality.
Furthermore, we have a proposed a second method to 
actually compute~$\seplamV$, although this algorithm is significantly more expensive.

\section*{Acknowledgements}
The author is grateful for the referees' helpful suggestions,
and particularly for the first referee's observation that the efficient pseudospectral plotting technique
of~\cite{Lui97} could be used in~\cref{alg:seplamd} to bring down its worst-case 
asymptotic work complexity to cubic in $n$ as opposed to quartic; see~\cref{sec:d_eps_eval} and \cref{sec:work}.

\small
\bibliographystyle{alpha}
\bibliography{csc,special,software}

\newcommand{\etalchar}[1]{$^{#1}$}
\begin{thebibliography}{GMO{\etalchar{+}}06}

\bibitem[AGV17]{ArmGV17}
G.~Armentia, J.-M. Gracia, and F.-E. Velasco.
\newblock On the boundaries of strict pseudospectra.
\newblock {\em Linear Algebra Appl.}, 525:20--32, 2017.

\bibitem[BB90]{BoyB90}
S.~Boyd and V.~Balakrishnan.
\newblock A regularity result for the singular values of a transfer matrix and
  a quadratically convergent algorithm for computing its {${L}_\infty$-norm}.
\newblock {\em Systems Control Lett.}, 15(1):1--7, 1990.

\bibitem[BBK89]{BoyBK89}
S.~Boyd, V.~Balakrishnan, and P.~Kabamba.
\newblock A bisection method for computing the {$\mathcal{H}_{\infty}$} norm of
  a transfer matrix and related problems.
\newblock {\em Math. Control Signals Systems}, 2:207--219, 1989.

\bibitem[BCL{\etalchar{+}}20]{BurCLetal20}
J.~V. Burke, F.~E. Curtis, A.~S. Lewis, M.~L. Overton, and L.~E.~A. Sim{\~o}es.
\newblock Gradient sampling methods for nonsmooth optimization.
\newblock In A.~M. Bagirov, M.~Gaudioso, N.~Karmitsa, M.~M. M{\"a}kel{\"a}, and
  S.~Taheri, editors, {\em {N}umerical {N}onsmooth {O}ptimization: {S}tate of
  the {A}rt {A}lgorithms}, pages 201--225, Cham, 2020. Springer International
  Publishing.

\bibitem[BLO03]{BurLO03}
J.~V. Burke, A.~S. Lewis, and M.~L. Overton.
\newblock Robust stability and a criss-cross algorithm for pseudospectra.
\newblock {\em {IMA} J. Numer. Anal.}, 23(3):359--375, 2003.

\bibitem[BLO04]{BurLO04}
J.~V. Burke, A.~S. Lewis, and M.~L. Overton.
\newblock Pseudospectral components and the distance to uncontrollability.
\newblock {\em {SIAM} J. Matrix Anal. Appl.}, 26(2):350--361, 2004.

\bibitem[BLO05]{BurLO05}
J.~V. Burke, A.~S. Lewis, and M.~L. Overton.
\newblock A robust gradient sampling algorithm for nonsmooth, nonconvex
  optimization.
\newblock {\em {SIAM} J. Optim.}, 15(3):751--779, 2005.

\bibitem[BM18]{BenM18a}
P.~Benner and T.~Mitchell.
\newblock Faster and more accurate computation of the {$\mathcal{H}_\infty$}
  norm via optimization.
\newblock {\em {SIAM} J. Sci. Comput.}, 40(5):A3609--A3635, October 2018.

\bibitem[BM19]{BenM19}
P.~Benner and T.~Mitchell.
\newblock Extended and improved criss-cross algorithms for computing the
  spectral value set abscissa and radius.
\newblock {\em {SIAM} J. Matrix Anal. Appl.}, 40(4):1325--1352, 2019.

\bibitem[BMX99]{BenMX98b}
P.~Benner, V.~Mehrmann, and H.~Xu.
\newblock A note on the numerical solution of complex {H}amiltonian and
  skew-{H}amiltonian eigenvalue problems.
\newblock {\em Electron. Trans. Numer. Anal.}, 8:115--126, 1999.

\bibitem[BS90]{BruS90}
N.~A. Bruinsma and M.~Steinbuch.
\newblock A fast algorithm to compute the {$H\sb{\infty}$}-norm of a transfer
  function matrix.
\newblock {\em Systems Control Lett.}, 14(4):287--293, 1990.

\bibitem[BSV12]{BenSV12}
P.~Benner, V.~Sima, and M.~Voigt.
\newblock $\mathcal{L}_\infty$-norm computation for continuous-time descriptor
  systems using structured matrix pencils.
\newblock {\em {IEEE} Trans. Autom. Control}, 57(1):233--238, January 2012.

\bibitem[BV04]{BoyV04}
S.~Boyd and L.~Vandenberghe.
\newblock {\em {C}onvex {O}ptimization}.
\newblock Cambridge University Press, Cambridge, 2004.

\bibitem[Bye88]{Bye88}
R.~Byers.
\newblock A bisection method for measuring the distance of a stable matrix to
  unstable matrices.
\newblock {\em {SIAM} J. Sci. Statist. Comput.}, 9:875--881, 1988.

\bibitem[Bye90]{Bye90a}
R.~Byers.
\newblock Detecting nearly uncontrollable pairs.
\newblock In M.~A. Kaashoek, J.~H. Schuppen, and A.~C.~M. Ran, editors, {\em
  Signal processing, scattering and operator theory, and numerical methods},
  volume~3 of {\em Proc. of the International Symposium MTNS-89, Amsterdam
  1989}, pages 447--457, Boston, MA, 1990. Birkh{\"a}user.

\bibitem[CMO17]{CurMO17}
F.~E. Curtis, T.~Mitchell, and M.~L. Overton.
\newblock A {BFGS-SQP} method for nonsmooth, nonconvex, constrained
  optimization and its evaluation using relative minimization profiles.
\newblock {\em Optim. Methods Softw.}, 32(1):148--181, 2017.

\bibitem[Dem83]{Dem83}
J.~Demmel.
\newblock {\em A {N}umerical {A}nalyst's {J}ordan {C}anonical {F}orm}.
\newblock PhD thesis, University of California, Berkeley, Berkeley, CA 94720,
  1983.

\bibitem[Dem86]{Dem86}
J.~Demmel.
\newblock Computing stable eigendecompositions of matrices.
\newblock {\em Linear Algebra Appl.}, 79:163--193, 1986.

\bibitem[Dem87]{Dem87c}
J.~Demmel.
\newblock A counterexample for two conjectures about stability.
\newblock {\em {IEEE} Trans. Autom. Control}, 32(4):340--342, April 1987.

\bibitem[DHT14]{DriHT14}
T.~A Driscoll, N.~Hale, and L.~N. Trefethen.
\newblock {\em Chebfun Guide}.
\newblock Pafnuty Publications, Oxford, UK, 2014.

\bibitem[GMO{\etalchar{+}}06]{GuMOetal06}
M.~Gu, E.~Mengi, M.~L. Overton, J.~Xia, and J.~Zhu.
\newblock Fast methods for estimating the distance to uncontrollability.
\newblock {\em {SIAM} J. Matrix Anal. Appl.}, 28(2):477--502, 2006.

\bibitem[GN93]{GaoN93}
M.~Gao and M.~Neumann.
\newblock A global minimum search algorithm for estimating the distance to
  uncontrollability.
\newblock {\em Linear Algebra Appl.}, 188/189:305--350, 1993.

\bibitem[GO06]{GuO06a}
M.~Gu and M.~L. Overton.
\newblock An algorithm to compute $\mathrm{Sep}_\lambda$.
\newblock {\em {SIAM} J. Matrix Anal. Appl.}, 28(2):348--359, 2006.

\bibitem[Gu00]{Gu00}
M.~Gu.
\newblock New methods for estimating the distance to uncontrollability.
\newblock {\em {SIAM} J. Matrix Anal. Appl.}, 21(3):989--1003, 2000.

\bibitem[Hig93]{Hig93}
N.~J. Higham.
\newblock Perturbation theory and backward error for {$AX-XB=C$}.
\newblock {\em {BIT}}, 33(1):124--136, 1993.

\bibitem[HW97]{HeW97}
C.~He and G.~A. Watson.
\newblock An algorithm for computing the numerical radius.
\newblock {\em {IMA} J. Numer. Anal.}, 17(3):329--342, June 1997.

\bibitem[KK14]{KarK14}
M.~Karow and D.~Kressner.
\newblock On a perturbation bound for invariant subspaces of matrices.
\newblock {\em {SIAM} J. Matrix Anal. Appl.}, 35(2):599--618, 2014.

\bibitem[LO13]{LewO13}
A.~S. Lewis and M.~L. Overton.
\newblock Nonsmooth optimization via quasi-{N}ewton methods.
\newblock {\em Math. Program.}, 141(1--2, Ser. A):135--163, 2013.

\bibitem[Lui97]{Lui97}
S.~H. Lui.
\newblock Computation of pseudospectra by continuation.
\newblock {\em {SIAM} J. Sci. Comput.}, 18(2):565--573, 1997.

\bibitem[Mita]{granso}
T.~Mitchell.
\newblock {GRANSO}: {GR}adient-based {A}lgorithm for {N}on-{S}mooth
  {O}ptimization.
\newblock \url{http://timmitchell.com/software/GRANSO}.
\newblock See also \cite{CurMO17}.

\bibitem[Mitb]{rostapack}
T.~Mitchell.
\newblock {ROSTAPACK}: {RO}bust {STA}bility {PACK}age.
\newblock \url{http://timmitchell.com/software/ROSTAPACK}.

\bibitem[Mit20]{Mit20a}
T.~Mitchell.
\newblock Computing the {K}reiss constant of a matrix.
\newblock {\em {SIAM} J. Matrix Anal. Appl.}, 41(4):1944--1975, 2020.

\bibitem[Mit21]{Mit21}
T.~Mitchell.
\newblock Fast interpolation-based globality certificates for computing
  {K}reiss constants and the distance to uncontrollability.
\newblock {\em {SIAM} J. Matrix Anal. Appl.}, 42(2):578--607, 2021.

\bibitem[Mit22]{Mit20}
T.~Mitchell.
\newblock Convergence rate analysis and improved iterations for numerical
  radius computation.
\newblock {\em To appear in SIAM J. Sci. Comput.}, Accepted October, 2022.
\newblock Preprint available at \url{https://arxiv.org/abs/2002.00080}.

\bibitem[MO05]{MenO05}
E.~Mengi and M.~L. Overton.
\newblock Algorithms for the computation of the pseudospectral radius and the
  numerical radius of a matrix.
\newblock {\em {IMA} J. Numer. Anal.}, 25(4):648--669, 2005.

\bibitem[MVD20a]{MehV20a}
V.~Mehrmann and P.~Van~Dooren.
\newblock Optimal robustness of passive discrete-time systems.
\newblock {\em {IMA} J. Math. Control Inform.}, 37(4):1248--1269, July 2020.

\bibitem[MVD20b]{MehV20}
V.~Mehrmann and P.~M. Van~Dooren.
\newblock Optimal robustness of port-{H}amiltonian systems.
\newblock {\em {SIAM} J. Matrix Anal. Appl.}, 41(1):134--151, 2020.

\bibitem[MVD22]{MitV21}
T.~Mitchell and P.~Van~Dooren.
\newblock Root-max problems, hybrid expansion-contraction, and quadratically
  convergent optimization of passive systems.
\newblock {\em To appear in SIAM J. Matrix Anal. Appl.}, Accepted November,
  2022.
\newblock Preprint available at \url{http://arxiv.org/abs/2109.00974}.

\bibitem[PPT09]{PacPT09}
R.~Pach{\'o}n, R.~B. Platte, and L.~N. Trefethen.
\newblock Piecewise-smooth chebfuns.
\newblock {\em {IMA} J. Numer. Anal.}, 30(4):898--916, July 2009.

\bibitem[RKBA21]{RoyKBetal20}
N.~Roy, M.~Karow, S.~Bora, and G.~Armentia.
\newblock Approximation of pseudospectra of block triangular matrices.
\newblock {\em Linear Algebra Appl.}, 623:398--419, 2021.
\newblock Special issue in honor of Paul Van Dooren.

\bibitem[Sim96]{Sim96a}
V.~Simoncini.
\newblock On the numerical solution of {$AX-XB=C$}.
\newblock {\em {BIT}}, 36(4):814--830, 1996.

\bibitem[Ste73]{Ste73b}
G.~W. Stewart.
\newblock Error and perturbation bounds for subspaces associated with certain
  eigenvalue problems.
\newblock {\em {SIAM} Rev.}, 15:727--764, 1973.

\bibitem[TE05]{TreE05}
L.~N. Trefethen and M.~Embree.
\newblock {\em Spectra and {P}seudospectra: {T}he {B}ehavior of {N}onnormal
  {M}atrices and {O}perators}.
\newblock Princeton University Press, Princeton, 2005.

\bibitem[Tre20]{perTre20}
L.~N. Trefethen.
\newblock Personal communication, August 2020.
\newblock Recorded at
  \url{https://groups.google.com/g/chebfun-users/c/9d6zzPkfwZk/m/cJJKx3C4AQAJ}.

\bibitem[Var67]{Var67}
J.~M. Varah.
\newblock {\em The {C}omputation of {B}ounds for the {I}nvariant {S}ubspaces of
  a {G}eneral {M}atrix {O}perator}.
\newblock PhD thesis, Stanford University, Palo Alto, CA 94305, 1967.

\bibitem[Var79]{Var79}
J.~M. Varah.
\newblock On the separation of two matrices.
\newblock {\em {SIAM} J. Numer. Anal.}, 16(2):216--222, 1979.

\end{thebibliography}

\end{document}